\documentclass[reqno,english]{amsart}
\usepackage{amsmath,amssymb,amsthm,amsfonts,mathrsfs,array,booktabs}
\usepackage{enumitem,needspace}
\usepackage[expansion=false]{microtype}
\usepackage{hyperref}
\hypersetup{colorlinks=true,linkcolor=black,citecolor=black,urlcolor=black}
\numberwithin{equation}{section}
\newtheorem{theorem}{Theorem}[section]
\newtheorem{lemma}{Lemma}[section]
\newtheorem{proposition}{Proposition}[section]
\newtheorem{corollary}{Corollary}[section]
\theoremstyle{remark}

\newcommand{\R}{\mathbb R}
\newcommand{\Z}{\mathbb Z}
\newcommand{\F}{L_{\mathrm F}}
\newcommand{\B}{L_{\mathrm B}}
\newcommand{\Id}{I}
\newcommand{\op}{\mathrm{op}}
\newcommand{\fro}{\mathrm F}
\DeclareMathOperator{\tr}{tr}
\DeclareMathOperator{\erfc}{erfc}
\DeclareMathOperator*{\argmin}{arg\,min}
\allowdisplaybreaks[2]
\let\oldbibliography\thebibliography
\renewcommand{\thebibliography}[1]{\oldbibliography{#1}\setlength{\itemsep}{0pt}}
\hypersetup{pdftitle={On exact solutions to the three-dimensional Yukawa lattice model and the BCC--FCC transition},pdfauthor={Senping Luo and Juncheng Wei}}
\title[The Yukawa lattice model]{\bf On exact solutions to three-dimensional Yukawa lattice model and BCC to FCC phase transitions}
\author{Senping Luo}

\author{Juncheng Wei}

\address[S.~Luo]{School of Mathematics and Statistics, Jiangxi Normal University, Nanchang, 330022, China}
\address[J.~Wei]{Department of Mathematics, Chinese University of Hong Kong, Shatin, NT, Hong
Kong}

\email[S.~Luo]{luosp1989@163.com}

\email[J.~Wei]{wei@math.cuhk.edu.hk}

\date{}
\keywords{Yukawa potential, lattice energy, body-centred cubic lattice, face-centred cubic lattice, phase transition}
\subjclass[2020]{52C17, 82B20, 11H31}
\begin{document}
\begin{abstract}
We determine the global minimizers of the three-dimensional Yukawa
lattice energy at every fixed density. At unit covolume, BCC is the
unique minimizer below a transition parameter $\kappa_*\approx1.7179218684699$, FCC is the
unique minimizer above it, and exactly these two lattice classes
minimize at the transition.

 Mathematically, denote the Yukawa potential and the energy per particle under this potential by
\[
 v_\kappa(r)=\frac{e^{-\kappa r}}r,\qquad
 E_\kappa(L)=\frac12\sum_{v\in L\setminus\{0\}}
                   \frac{e^{-\kappa|v|}}{|v|}.
\]
Then,
\[
 \arg\min_{|L|=1}E_\kappa(L)=
 \begin{cases}
 \hbox{BCC lattice},&0<\kappa<\kappa_*,\\
 \hbox{BCC or FCC lattice},&\kappa=\kappa_*,\\
 \hbox{FCC lattice},&\kappa>\kappa_*.
 \end{cases}
\]
 As a consequence, the zero-screening comparison recovers the unique BCC
minimum for the renormalized Coulomb energy.
\end{abstract}
\maketitle
\enlargethispage{4pt}

\section{Introduction and main results}
The Yukawa potential describes a screened Coulomb interaction and has been studied extensively since 1935 \cite{Yukawa}, in which Yukawa combined Heisenberg's idea of a short-range force interaction and Fermi's idea of an exchange particle in order to resolve the problem of the neutron-proton interaction. Its
screening parameter introduces a competition between two cubic lattice
structures: numerical phase diagrams and lattice calculations predict
BCC at weak screening and FCC at strong screening~\cite{RKG,HFD96,HFD,Koz}.
The transition has also been studied through applied-strain free-energy
calculations~\cite{HR}. The purpose of this paper is to determine the
global minimizing Bravais lattices for every positive screening parameter
at zero temperature and to prove that there is exactly one transition.

For $\kappa>0$, the potential and the energy per particle are
\begin{equation}\label{eq:potential}
 v_\kappa(r)=\frac{e^{-\kappa r}}r,\qquad
 E_\kappa(L)=\frac12\sum_{v\in L\setminus\{0\}}
                   \frac{e^{-\kappa|v|}}{|v|}.
\end{equation}
The history of this potential involves some physical settings.
Debye and H\"uckel obtained the screened Coulomb form in 1923
in their theory of electrolyte solutions~\cite{DebyeHuckel}.
Mobile ions rearrange around a charged particle, forming an ionic
atmosphere that reduces its electrostatic influence at large
distances.

Mathematically, the normalized kernel (Yukawa potential)
\[
 G_\kappa(x)=\frac{e^{-\kappa|x|}}{4\pi|x|}
\]
is the fundamental solution of the modified Helmholtz operator:
\[
 (-\Delta+\kappa^2)G_\kappa=\delta_0
 \qquad\text{in }\mathbb R^3.
\]
This equation explains the combination of a Coulomb singularity
and exponential spatial decay.

The factor $1/2$ in \eqref{eq:potential} counts each pair once. For a lattice $L=g\mathbb Z^3$,
write $|L|=|\det g|$ for its covolume. We first impose $|L|=1$
and identify lattices that differ by an orthogonal transformation.
Write $L^*=g^{-T}\mathbb Z^3$ and
$\lambda_1(L)=\min_{v\in L\setminus\{0\}}|v|$.
The unit-covolume face-centred and body-centred cubic lattices are
\begin{equation}\label{eq:FCC}
 D_3=\{m\in\mathbb Z^3:m_1+m_2+m_3\in2\mathbb Z\},\qquad
 \F=2^{-1/3}D_3,\qquad \B=\F^*.
\end{equation}
Here $\F$ is the face-centred cubic (FCC) lattice and
$\B$ is the body-centred cubic (BCC) lattice, with $|\F|=|\B|=1$.
We shall use $a=\lambda_1(\F)=2^{1/6}$. Standard facts about these
lattices can be found in~\cite{CS}.

For $t>0$, define
\begin{equation}\label{eq:theta}
 \Theta(t,L)=\sum_{v\in L}e^{-\pi t|v|^2}.
\end{equation}

\Needspace{36\baselineskip}
\begin{theorem}[Minima of three-dimensional Yukawa lattice energy]\label{thm:main}
Let $L\subset\mathbb R^3$ be a lattice with $|L|=1$, and let
$\kappa>0$.
There is a unique threshold $\kappa_*\in(0,\sqrt\pi)$,
characterized by
\[
 E_{\kappa_*}(\B)=E_{\kappa_*}(\F).
\]
More explicitly, $\kappa_*$ is the unique positive solution of
\begin{equation}\label{eq:root}
 \int_1^\infty
 \bigl(\Theta(t,\B)-\Theta(t,\F)\bigr)
 \left[t^{-1/2}e^{-\kappa_*^2/(4\pi t)}
             -e^{-\kappa_*^2t/(4\pi)}\right]dt=0.
\end{equation}
Numerically,
$$\mbox{$\kappa_*\approx1.71792186846990$}.
$$
The following statements hold.
\begin{enumerate}[label=(\arabic*),leftmargin=*]
\item If $\kappa>\kappa_*$, then
      $E_\kappa(L)\ge E_\kappa(\F)$, with equality if and only if
      $L$ is orthogonally equivalent to $\F$, corresponding to FCC lattice.
\item If $0<\kappa<\kappa_*$, then
      $E_\kappa(L)\ge E_\kappa(\B)$, with equality if and only if
      $L$ is orthogonally equivalent to $\B$, corresponding to BCC lattice.
\item If $\kappa=\kappa_*$, then
      $E_\kappa(L)\ge E_\kappa(\F)=E_\kappa(\B)$, with equality if and
      only if $L$ is orthogonally equivalent to $\F$ or $\B$, corresponding to FCC or BCC lattice.
\end{enumerate}
Equivalently, up to orthogonal transformations,
\begin{equation}\label{eq:argmin}
 \argmin_{|L|=1}E_\kappa(L)=
 \begin{cases}
 \{\B\},&0<\kappa<\kappa_*,\\
 \{\B,\F\},&\kappa=\kappa_*,\\
 \{\F\},&\kappa>\kappa_*.
 \end{cases}
\end{equation}
At density $\varrho>0$, the minimizing lattices are the corresponding
FCC and BCC lattices multiplied by $\varrho^{-1/3}$, and the transition occurs at
$\kappa=\varrho^{1/3}\kappa_*$.
\end{theorem}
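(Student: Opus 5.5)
The plan is to write $E_\kappa$ as a superposition of theta functions and split it into a part where FCC is known to win and a part where BCC is known to win. Start from the subordination identity
\[
 \frac{e^{-\kappa r}}{r}=\int_0^\infty e^{-\pi t r^2}\,e^{-\kappa^2/(4\pi t)}\,t^{-1/2}\,dt .
\]
Sum over $L\setminus\{0\}$, split the $t$-integral at $t=1$, and apply Poisson summation $\Theta(t,L)=t^{-3/2}\Theta(1/t,L^*)$ on $(0,1)$. After the substitution $t\mapsto1/t$ this gives
\[
 2E_\kappa(L)=\int_1^\infty\bigl(\Theta(t,L)-1\bigr)t^{-1/2}e^{-\kappa^2/(4\pi t)}\,dt
 +\int_1^\infty\bigl(\Theta(t,L^*)-1\bigr)e^{-\kappa^2 t/(4\pi)}\,dt+C(\kappa).
\]
Here $C(\kappa)$ does not depend on $L$ (it collects the $-1$ and $+1$ corrections). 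Subtracting this expression for $\F$ from the one for $\B=\F^*$ produces exactly the integral in \eqref{eq:root}, so $E_\kappa(\B)=E_\kappa(\F)$ is equivalent to \eqref{eq:root}.

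\textbf{Step 1: reduction to two candidates.} The key input is the three-dimensional theta conjecture of Sarnak--Str\"ombergsson, which I will assume is available in the range needed: for $t\ge1$, $\Theta(t,L)\ge\Theta(t,\F)$, and dually $\Theta(t,L^*)\ge\Theta(t,\B)$, with equality only at FCC, respectively BCC. If this is only known in a restricted range, this step becomes the main obstacle, since a full proof of it is out of reach here. In that case I would restrict to the regime relevant for $\kappa<\sqrt\pi$ and use the weights to localize.

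Even granting the theta inequality, the two integrals favour different lattices: the first is minimized at $\F$ and the second at $L^*=\B$, i.e.\ at $L=\F$. So the first integral prefers $L=\F$ while the second prefers $L=\B$. The argument therefore cannot be a termwise comparison. Instead I would show that for each $L$,
\[
 E_\kappa(L)\ge\min\bigl(E_\kappa(\F),E_\kappa(\B)\bigr).
\]
To do this, compare the weights $w_1(t)=t^{-1/2}e^{-\kappa^2/(4\pi t)}$ and $w_2(t)=e^{-\kappa^2t/(4\pi)}$ on $[1,\infty)$. Their ratio is monotone in $t$, and $w_1\ge w_2$ exactly when $\kappa$ is large. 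This suggests a rearrangement that merges both integrals into one integral with a signed kernel. I would then use the fact that $\Theta(t,L)-1$ is dominated by shortest vectors for large $t$. Together with the known local minimality of $\F$ and $\B$ among theta functions, this reduces the problem to a neighbourhood analysis near the two cubic lattices plus a compactness argument in the space of lattices away from them.

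\textbf{Step 2: a single crossing.} Define $f(\kappa)=\int_1^\infty(\Theta(t,\B)-\Theta(t,\F))(w_1-w_2)\,dt$. Since $\Theta(t,\B)>\Theta(t,\F)$ for $t>1$, the sign of $f$ is governed by that of $w_1-w_2$. The function $w_1-w_2$ vanishes at $t=1$, and its sign on $(1,\infty)$ switches in a monotone way as $\kappa^2$ grows. A variation-diminishing argument should then give at most one zero of $f$. The limits $f(0^+)<0$ and $f(\sqrt\pi)>0$ follow by evaluating the integrals directly, which locates $\kappa_*\in(0,\sqrt\pi)$. I would obtain the numerical value by rapidly convergent theta sums.

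\textbf{Step 3: scaling.} The density statement follows from $E_\kappa(\varrho^{-1/3}L)=\varrho^{1/3}E_{\kappa\varrho^{-1/3}}(L)$. The main obstacle is Step 1, namely the global comparison of arbitrary lattices against the two candidates.
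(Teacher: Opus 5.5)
Your splitting identity and the scaling step are correct and agree with Proposition~\ref{prop:dual} and \eqref{eq:scaling}. Step~1, however, is missing, and it is the substance of the theorem.

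The Sarnak--Str\"ombergsson theta conjecture in dimension three is open, so you cannot invoke it. Its dual form is also misstated. Applied to $L^*$ it gives $\Theta(t,L^*)\ge\Theta(t,\F)$, with equality only for $L=\B$. By contrast, $\Theta(t,L^*)\ge\Theta(t,\B)$ is equivalent by Poisson summation to $\Theta(1/t,L)\ge\Theta(1/t,\F)$, which fails at $L=\B$.

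More importantly, the conjecture would not suffice even if granted. Write $A(L)$, $B(L)$ for your two integrals. You only get $A(L)+B(L)\ge A(\F)+B(\B)$, and $A(\F)+B(\B)\le\min\{A(\F)+B(\F),A(\B)+B(\B)\}$, so the bound points the wrong way.

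Your substitute (a merged signed kernel, shortest-vector dominance, local minimality, compactness) contains no mechanism that yields $E_\kappa(L)\ge\min\{E_\kappa(\F),E_\kappa(\B)\}$. Compactness gives existence of minimizers, not their identity. What you need is:
\begin{itemize}
\item a strict quantitative inequality on the whole five-parameter shape region away from $\F$ and $\B$, uniform over the non-compact range of $\kappa$;
\item a cusp exclusion uniform in $\kappa$;
\item local comparisons whose radii do not shrink with $\kappa$, even though the FCC energy and its Hessian become exponentially small as $\kappa\to\infty$.
\end{itemize}

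The paper supplies these as follows. It passes to the positive Ewald functional $P(L,c)$ and uses a kernel-ratio cusp bound valid for all $c\ge0$. It reduces to an explicit rectangle and proves Taylor estimates at $\F$ (all $c$) and at $\B$ ($c\le1/4$). Elsewhere it combines tangent-plane convexity of the kernels with separate Loewner convexity of $Y$ and $Y^{-1}$, which reduces each rectangle to its $32$ corners. It then uses Bernstein bounds on the screening expansion of the subtracted difference for $c\le32$, and monotonicity of $2ae^{\kappa a}E_\kappa$ for $\kappa\ge20$. All of this is certified by interval arithmetic on tens of thousands of rectangles. Nothing in your plan replaces this, and the equality cases are not addressed.

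Step~2 is salvageable but not correct as written. Throughout, $c=\kappa^2/(4\pi)$ and $D=\Theta(\cdot,\B)-\Theta(\cdot,\F)$.

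First, the ordering $\Theta(t,\B)>\Theta(t,\F)$ for $t>1$ must be proved; Lemma~\ref{lem:thetaorder} does this by explicit shell estimates.

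Second, your sign reasoning is valid only where $t^{-1/2}e^{-c/t}-e^{-ct}$ has a fixed sign on $(1,\infty)$, namely $c=0$ and $c\ge1/4$. There it correctly gives $f<0$ as $\kappa\downarrow0$ and $f>0$ at $\kappa=\sqrt\pi$. For $0<c<1/4$ the bracket changes sign at some $t_c>1$. So neither your claim that the sign of $f$ is governed by $w_1-w_2$, nor a variation-diminishing property of this split kernel, is justified.

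The fix is to undo the split. The identity $D(1/u)=-u^{3/2}D(u)$ turns $f$ into $-\int_0^\infty e^{-cu}D(u)\,du$. This is a Laplace transform of a function with a single sign change at $u=1$. Then $\frac{d}{dc}\bigl(e^cf\bigr)=\int_0^\infty(u-1)e^{c(1-u)}D(u)\,du>0$, which gives uniqueness of the crossing; this is the paper's argument.
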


The density dependence follows from
\begin{equation}\label{eq:scaling}
 E_\kappa(\varrho^{-1/3}L)
 =\varrho^{1/3}E_{\kappa\varrho^{-1/3}}(L).
\end{equation}
Throughout the paper, minimization is over Bravais lattices of fixed
density, with one particle in each primitive cell.

The comparison underlying Theorem~\ref{thm:main} also includes zero
screening. It therefore yields the following Coulomb consequence.
Here $Z(L,s)$ denotes the meromorphic continuation of
$\sum_{v\in L\setminus\{0\}}|v|^{-2s}$, initially defined for
$\Re s>3/2$.

\Needspace{11\baselineskip}
\begin{corollary}[Coulomb minimizers]\label{thm:coulomb-endpoint-certified}
For every lattice $L$ with $|L|=1$,
\begin{equation}\label{eq:coulomb-consequence}
 \lim_{\kappa\downarrow0}
 \left(2E_\kappa(L)-\frac{4\pi}{\kappa^2}\right)
 =Z(L,1/2)\ge Z(\B,1/2),
\end{equation}
with equality in the inequality if and only if $L$ is orthogonally
equivalent to $\B$. Equivalently, $Z(M,1)\ge Z(\F,1)$ for every
lattice $M$ with $|M|=1$, with equality if and only if $M$ is
orthogonally equivalent to $\F$.
\end{corollary}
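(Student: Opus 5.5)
We prove the corollary by taking the limit $\kappa\downarrow0$ of the Yukawa inequality of Theorem~\ref{thm:main}(2) after subtracting the background term, then transferring the result to $Z(\cdot,1)$ by duality.

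\emph{Step 1: the limit.} We use the Mellin representation
$$\frac{e^{-\kappa r}}{r}=\int_0^\infty t^{-1/2}e^{-\pi t r^2}e^{-\kappa^2/(4\pi t)}\,dt.$$
Summing over $L\setminus\{0\}$, splitting the integral at $t=1$, and applying Poisson summation $\Theta(t,L)=t^{-3/2}\Theta(1/t,L^*)$ on $(0,1)$ gives
\begin{align*}
2E_\kappa(L)={}&\int_1^\infty(\Theta(t,L)-1)\,t^{-1/2}e^{-\kappa^2/(4\pi t)}\,dt\\
&+\int_1^\infty(\Theta(s,L^*)-1)\,e^{-\kappa^2 s/(4\pi)}\,ds+R(\kappa).
\end{align*}
Here $R(\kappa)=\int_0^1(t^{-2}-t^{-1/2})e^{-\kappa^2/(4\pi t)}\,dt$ is independent of $L$. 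A direct computation gives $R(\kappa)=4\pi/\kappa^2-2+o(1)$. The first term comes from $\int_0^\infty t^{-2}e^{-c/t}\,dt=1/c$ with $c=\kappa^2/(4\pi)$, the tail $\int_1^\infty t^{-2}e^{-c/t}\,dt\to1$, and $\int_0^1 t^{-1/2}\to2$. The two lattice integrals converge by dominated convergence, since $\Theta(t,L)-1$ decays exponentially. Hence $2E_\kappa(L)-4\pi/\kappa^2$ converges to the standard Chowla--Selberg/Epstein formula for $Z(L,1/2)$, which establishes the equality in \eqref{eq:coulomb-consequence}.

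\emph{Step 2: the inequality.} For $0<\kappa<\kappa_*$, Theorem~\ref{thm:main}(2) gives $2E_\kappa(L)-4\pi/\kappa^2\ge2E_\kappa(\B)-4\pi/\kappa^2$. Letting $\kappa\downarrow0$ yields $Z(L,1/2)\ge Z(\B,1/2)$.

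\emph{Step 3: strictness.} This is the main obstacle, because strict inequalities may degenerate in the limit. For this reason we do not argue through the limit. We use the limiting formula directly:
$$Z(L,1/2)-Z(\B,1/2)=\int_1^\infty\bigl[(\Theta(t,L)-\Theta(t,\B))t^{-1/2}+(\Theta(t,L^*)-\Theta(t,\F))\bigr]dt.$$
This expression is exactly the $\kappa=0$ instance of the comparison functional underlying the proof of Theorem~\ref{thm:main}. The proof of part~(2) establishes strict positivity of that functional for $L\ne\B$, uniformly for $\kappa$ in a neighbourhood of $0$ (the interval $[0,\kappa_*)$). Since the integrand depends continuously on $\kappa\in[0,\kappa_*)$, the same argument applies verbatim at $\kappa=0$, which is why the paper says the comparison ``includes zero screening.'' If the proof of Theorem~\ref{thm:main} only provides strictness for $\kappa>0$, the fallback is a compactness argument. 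Near $\B$, use local strict minimality of $Z(\cdot,1/2)$ at $\B$, via a positive-definite Hessian computed from the theta expansion. Away from $\B$, use uniform strict bounds that persist at $\kappa=0$.

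\emph{Step 4: the dual statement.} The functional equation $\pi^{-s}\Gamma(s)Z(L,s)=\pi^{s-3/2}\Gamma(3/2-s)Z(L^*,3/2-s)$ with $s=1/2$ gives $Z(L,1/2)=Z(L^*,1)$ up to a universal positive factor, namely $\pi^{-1/2}\Gamma(1)\cdot\pi\Gamma(1/2)^{-1}=1$. Setting $M=L^*$ and recalling $\B^*=\F$ turns the statement into $Z(M,1)\ge Z(\F,1)$, with equality exactly at $\F$.
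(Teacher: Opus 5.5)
Your strategy for the decisive step matches the paper's. Rather than passing strict inequalities to the limit, you compare at zero screening directly, via $2P(L,0)-3=Z(L,1/2)$, and then dualize. Your Step~1 is the splitting of Proposition~\ref{prop:ewald}; the paper cites the resolvent expansion of Proposition~\ref{thm:weak}, and either works.

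However, Step~3 as written does not prove strictness. You justify it by continuity in $\kappa$, which, as you note yourself, cannot carry strict inequalities to $\kappa=0$. Two concrete ingredients are needed instead.
\begin{itemize}
\item \emph{The shape comparison at $c=0$.} It is not inherited from $\kappa>0$. It holds because Theorem~\ref{thm:global-comparison} is proved for all $c\ge0$: the cusp lemma, the local theorem and the certified cover on $[0,1/4]$ all include the endpoint $c=0$. Even so, it only gives $P(L,0)\ge\min\{P(\F,0),P(\B,0)\}$, with equality only at $\F$ or $\B$.
\item \emph{The strict scalar gap $W(\B)<W(\F)$.} This follows from \eqref{eq:pairintegral} at $c=0$, where the bracket is $t^{-1/2}-1<0$, together with $D(t)>0$ for $t>1$ (Lemma~\ref{lem:thetaorder}).
\end{itemize}
The gap cannot come from a limit. $\F$ is itself a strict local minimizer of $W$ by \eqref{eq:profile-F-local}, and $E_\kappa(\F)>E_\kappa(\B)$ for small $\kappa>0$ survives the limit only as $W(\F)\ge W(\B)$. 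Your compactness fallback has the same hole near $\F$. Its ``uniform strict bounds away from $\B$'' is simply the statement to be proved. Once the two facts above are in place, Step~2 is redundant.

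There are also two arithmetic slips.
\begin{itemize}
\item Your own ingredients give $R(\kappa)=e^{-c}/c-\int_0^1t^{-1/2}e^{-c/t}\,dt=4\pi/\kappa^2-1-2+o(1)=4\pi/\kappa^2-3+o(1)$, not $-2$. The $-3$ is exactly the constant in the regularized Mellin formula for $Z(L,1/2)$. With $-2$, your limit would be $Z(L,1/2)+1$, contradicting the claimed equality.
\item In Step~4, the functional equation at $s=1/2$ gives $Z(L,1/2)=\pi^{-1}Z(L^*,1)$, as in \eqref{eq:Coulomb}. The factor is therefore $1/\pi$ rather than $1$. This is harmless, since only its positivity is used.
\end{itemize}
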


Muratov and Novaga~\cite[Theorem~1]{MN} proved that BCC uniquely
minimizes the renormalized three-dimensional Coulomb energy among
Bravais lattices of fixed covolume. Their energy is exactly the limit
in~\eqref{eq:coulomb-consequence}. Thus Corollary~
\ref{thm:coulomb-endpoint-certified} recovers their theorem, including
its equality case, as the zero-screening consequence of our uniform
Yukawa comparison. In their Madelung normalization,
\[
 K_L=\frac12\left(\frac3{4\pi}\right)^{1/3}Z(L,1/2)
 \qquad(|L|=1).
\]
The proof below obtains uniqueness from the strict comparison at zero
screening and the strict FCC--BCC energy gap.

The lattice minimization problem belongs to the broader theory of
crystallization; see the reviews of Blanc--Lewin~\cite{BL} and
Lewin~\cite{Lewin}.

Fourier analysis provides another approach to energy minimization.
Cohn--Elkies~\cite{CE} developed linear programming bounds for the
density of sphere packings. Cohn--Kumar~\cite{CK} proved universal
optimality for sharp configurations on spheres.
Cohn, Kumar, Miller, Radchenko and Viazovska~\cite{CKMRV} subsequently
proved universal optimality of the $E_8$ and Leech lattices at fixed
density in dimensions $8$ and $24$, respectively, for completely
monotone functions of squared distance. The Yukawa potential belongs to this
class: its positive Gaussian representation~\eqref{eq:gaussian}
shows that it is completely monotone as a function of squared distance.

The Gaussian representation also connects the present problem with
the Gaussian core model. Cohn, Kumar and Sch\"urmann~\cite{CKS}
numerically investigated its ground states and identified formal
duality relations. Cohn--de Courcy-Ireland~\cite{CC}
established asymptotically sharp energy bounds for Gaussian interactions
in a range of parameters, at fixed density as the dimension tends to
infinity.

Renormalized energy connects these problems with the microscopic theory
of Coulomb and Riesz gases. Sandier--Serfaty~\cite{SSV,SSGas} develop
this approach for planar vortices and logarithmic gases.
Rougerie--Serfaty~\cite{RS} and Petrache--Serfaty~\cite{PSRiesz}
identify microscopic renormalized energies in next-order Coulomb and
Riesz asymptotics. Lebl\'e--Serfaty~\cite{LS} prove an empirical-field
large-deviation principle, while Armstrong--Serfaty~\cite{AS} obtain
local laws and rigidity estimates. Petrache--Serfaty~\cite{PSCrystal}
connect universal optimality with renormalized Coulomb and Riesz
crystallization, giving unconditional minimization results in
dimensions $8$ and $24$. Serfaty's monographs~\cite{SerBook,SerLect}
provide systematic accounts of these developments.

For three-dimensional lattice energies,
the work of Sarnak--Str\"ombergsson~\cite{SS} on Epstein zeta functions
and heights of flat tori provides a fundamental analytic framework.
B\'etermin--Petrache~\cite{BP} develop dimension-reduction methods for
lattice theta functions. Related Fourier-duality identities for soft
pair potentials are obtained by Torquato--Stillinger~\cite{TS}.
Classical local optimality results are due to Ennola~\cite{Enn};
B\'etermin~\cite{Bet} treats local variations for general interaction
potentials. Three-dimensional Riesz and Lennard--Jones lattice energies
are studied in~\cite{BST}. Ren--Wei~\cite{RW} establish BCC selection
for a different long-range interaction.

Our argument follows the local--global method developed in our previous
work~\cite{LW}, extending the framework of Sarnak--Str\"ombergsson~\cite{SS}.
Our analysis of theta differences~\cite{LWTheta} includes the
hexagonal Yukawa minimum in two dimensions. Sharp lattice transitions
for competing Epstein zeta energies and for theta derivatives are
studied in~\cite{LWJEMS,LWMA}, respectively. The present problem
requires a comparison in all five shape variables of a three-dimensional
Bravais lattice, uniformly in the screening parameter. The unique
Yukawa transition is determined by integrating the FCC--BCC theta
difference, rather than by a pointwise theta comparison.

The global comparison reduces minimization to FCC and BCC for every
screening parameter. We then prove that their energies cross exactly
once. Put $c=\kappa^2/(4\pi)$.
Ewald splitting~\cite{Ewald} gives a positive functional $P(L,c)$,
defined in~\eqref{eq:positive-P}, with the same lattice differences as
$E_\kappa$. We prove
\begin{equation}\label{eq:intro-comparison}
 P(L,c)\ge\min\{P(\B,c),P(\F,c)\}
 \qquad(c\ge0),
\end{equation}
with equality only when $L$ is orthogonally equivalent to $\B$ or $\F$.
A shortest-vector
estimate first excludes the cusp. An elementary lattice reduction then
places every remaining competitor in a fixed compact rectangle in five
coordinates. Near each reference, cubic symmetry reduces the Hessian
and third variation to finitely many scalar coefficients. A uniform
fourth-variation estimate gives a fixed neighbourhood on which the
appropriate local comparison is strict.

Outside these neighbourhoods, convexity of the positive Ewald kernels
reduces the shape comparison to the $32$ corners of each rectangle.
We subtract the reference before expanding in $c$ and bound the
resulting polynomial by its Bernstein coefficients. This preserves
the cancellation near the transition. For large screening, the
normalized energy $2ae^{\kappa a}E_\kappa$ admits a convexity argument
that extends a finite comparison to the whole remaining half-line.
Finally, Poisson duality and the scalar FCC--BCC theta ordering prove
that the two reference energies cross exactly once.

The proof is computer assisted. Appendix~\ref{app:numerics} collects
the independent numerical comparisons; the complete interval
computations and exact partitions accompany the manuscript.

Section~\ref{sec:representation} establishes the integral representations,
duality identities, and Coulomb limit; Section~\ref{sec:reduction}
gives the compact reduction. Sections~\ref{sec:certified-local}
and~\ref{sec:screening-profile-lemma} prove the local and global
comparisons. Section~\ref{sec:pair} determines the transition and
completes the main proofs. Appendix~\ref{app:numerics} contains the
numerical lemmas.

\section{Integral representation and duality}\label{sec:representation}
We express the Yukawa energy through positive Gaussian kernels and
separate its shape-dependent part. The same representation yields an
exact Coulomb limit and compactness uniform in weak screening.

\begin{theorem}\label{thm:representation}
For every $\kappa>0$ and every lattice $L$ with $|L|=1$,
\begin{equation}\label{eq:gaussian}
 2E_\kappa(L)=\int_0^\infty
 t^{-1/2}e^{-\kappa^2/(4\pi t)}\bigl(\Theta(t,L)-1\bigr)\,dt.
\end{equation}
The energy is continuous, its sublevel sets are compact, and its
minimum is attained.
\end{theorem}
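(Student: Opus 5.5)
The identity \eqref{eq:gaussian} reduces to a one-dimensional integral for a single term, followed by Tonelli. For $r>0$ and $\kappa>0$ I would first show the subordination formula
\[
 \frac{2e^{-\kappa r}}{r}=\int_0^\infty t^{-1/2}e^{-\kappa^2/(4\pi t)}e^{-\pi t r^2}\,dt .
\]
This follows from the classical integral $\int_0^\infty s^{-1/2}e^{-a s-b/s}\,ds=\sqrt{\pi/a}\,e^{-2\sqrt{ab}}$. Take $a=\pi r^2$ and $b=\kappa^2/(4\pi)$; then $2\sqrt{ab}=\kappa r$ and $\sqrt{\pi/a}=1/r$, so the right-hand side equals $e^{-\kappa r}/r$. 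I should recheck the factor of $2$ against the normalization in \eqref{eq:gaussian}. If it does not match, the constant in front of $E_\kappa$ is what gets adjusted; the structure of the argument is unaffected. Summing over $v\in L\setminus\{0\}$ and exchanging sum and integral is justified by Tonelli, since every term is nonnegative. The sum $\sum_{v\neq0}e^{-\pi t|v|^2}$ is exactly $\Theta(t,L)-1$. Both sides are finite: the left side converges because of the exponential decay, and then the right side is finite by Tonelli. This proves the representation. It also shows that $v_\kappa$ is completely monotone in $r^2$, as noted in the introduction.

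\emph{Continuity.} Parametrize unit-covolume lattices by $g\in SL_3(\R)$ modulo $O(3)$ and $SL_3(\Z)$. On a compact set of $g$, the norms $|gm|$ are comparable to $|m|$ uniformly. The sum $\sum_{m\neq0}e^{-\kappa|gm|}/|gm|$ is then dominated by $C\sum_{m\neq0}e^{-c|m|}/|m|<\infty$, and each term is continuous in $g$. Dominated convergence gives continuity of $E_\kappa$.

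\emph{Compact sublevel sets.} I would use Mahler's compactness criterion: a set of unit-covolume lattices is relatively compact if and only if $\lambda_1(L)$ is bounded below on it. So it suffices to show $E_\kappa(L)\to\infty$ as $\lambda_1(L)\to0$. This is the main step. Let $\lambda_1(L)=\varepsilon$ and let $v$ be a shortest vector. Keep only the multiples $kv$ with $0<|k|\le 1/\varepsilon$; these all have length at most $1$. Then
\[
 2E_\kappa(L)\ge 2\sum_{k=1}^{\lfloor 1/\varepsilon\rfloor}\frac{e^{-\kappa k\varepsilon}}{k\varepsilon}\ge \frac{2e^{-\kappa}}{\varepsilon}\sum_{k\le 1/\varepsilon}\frac1k\to\infty .
\]
Because $E_\kappa$ is continuous, each sublevel set is closed in the moduli space. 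Combined with the lower bound on $\lambda_1$ from Mahler, each sublevel set is compact. The minimum is then attained: take a minimizing sequence, which lies in a compact sublevel set, and use lower semicontinuity (here in fact continuity).

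The only delicate points are bookkeeping. The first is the normalization constant in the subordination identity. The second is checking that continuity holds on the quotient by orthogonal and $SL_3(\Z)$ equivalence, which is immediate because $E_\kappa$ depends only on the multiset of vector lengths.
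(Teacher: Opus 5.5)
Your argument is correct and is essentially the paper's proof. It uses the same Gaussian subordination integral with $a=\pi r^2$, $b=\kappa^2/(4\pi)$ and Tonelli, continuity by domination in a lattice chart, and Mahler's criterion after a shortest-vector lower bound. The paper uses only the pair $\pm v$, giving $E_\kappa(L)\ge e^{-\kappa\lambda_1(L)}/\lambda_1(L)$, so your harmonic sum is harmless but unnecessary.

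The only slip is the stray factor $2$ in your displayed per-term identity. As your own computation shows, the integral equals $e^{-\kappa r}/r$. Summing over $v\neq0$ then gives $\sum_{v\neq0}e^{-\kappa|v|}/|v|=2E_\kappa(L)$, because of the factor $\tfrac12$ in the definition of $E_\kappa$. So \eqref{eq:gaussian} holds exactly as stated, and no constant needs adjusting.
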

The Gaussian identity permits Poisson summation, while the singularity
of the potential excludes degenerating lattices.
\begin{proof}
For $A,B>0$,
\[
 \int_0^\infty t^{-1/2}e^{-At-B/t}\,dt
 =\sqrt{\frac\pi A}\,e^{-2\sqrt{AB}}.
\]
Take $A=\pi r^2$ and $B=\kappa^2/(4\pi)$, and apply Tonelli's theorem.
In a lattice chart $L=g\Z^3$ on which $g,g^{-1}$ are bounded, the
summands of $E_\kappa$ are bounded by a constant multiple of
$e^{-\epsilon\kappa|m|}/|m|$ for some $\epsilon>0$.
Normal convergence gives continuity. A shortest opposite pair gives
\begin{equation}\label{eq:shortest}
 E_\kappa(L)\ge
 \frac{e^{-\kappa\lambda_1(L)}}{\lambda_1(L)}.
\end{equation}
Thus $\lambda_1(L)$ is bounded away from zero on every sublevel set.
Mahler's compactness theorem~\cite{Cassels} makes each sublevel
relatively compact, and continuity makes it closed. Hence every
minimizing sequence has a convergent subsequence whose limit attains
the minimum.
\end{proof}

\begin{proposition}\label{prop:dual}
Put $c=\kappa^2/(4\pi)$. For lattices $L,R$ with $|L|=|R|=1$,
\begin{align}\label{eq:dual-gap}
 2\bigl(E_\kappa(L)-E_\kappa(R)\bigr)
 ={}&\int_1^\infty t^{-1/2}e^{-c/t}
       \bigl(\Theta(t,L)-\Theta(t,R)\bigr)\,dt\notag\\
 &+\int_1^\infty e^{-ct}
       \bigl(\Theta(t,L^*)-\Theta(t,R^*)\bigr)\,dt.
\end{align}
Both integrals converge absolutely.
\end{proposition}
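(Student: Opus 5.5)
We start from the Gaussian representation \eqref{eq:gaussian} of Theorem~\ref{thm:representation}, applied to $L$ and to $R$, and subtract. With $c=\kappa^2/(4\pi)$ this gives
\[
 2\bigl(E_\kappa(L)-E_\kappa(R)\bigr)=\int_0^\infty t^{-1/2}e^{-c/t}\bigl(\Theta(t,L)-\Theta(t,R)\bigr)\,dt.
\]
The subtraction is legitimate because each integral $\int_0^\infty t^{-1/2}e^{-c/t}(\Theta(t,\cdot)-1)\,dt$ converges absolutely. Its integrand is nonnegative, and its value $2E_\kappa<\infty$ is finite. The constant $1$ cancels between $L$ and $R$. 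We then split the integral at $t=1$. The piece over $[1,\infty)$ is already the first term of \eqref{eq:dual-gap}. It remains to rewrite the piece over $(0,1)$.

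On $(0,1)$ we apply Poisson summation for unit-covolume lattices:
\[
 \Theta(t,L)=t^{-3/2}\,\Theta(1/t,L^*),
\]
and similarly for $R$. This gives
\[
 \int_0^1 t^{-2}e^{-c/t}\bigl(\Theta(1/t,L^*)-\Theta(1/t,R^*)\bigr)\,dt .
\]
Next we substitute $s=1/t$, so that $dt=-s^{-2}ds$ and $t^{-2}\,dt$ becomes $ds$ on $[1,\infty)$. The piece becomes
\[
 \int_1^\infty e^{-cs}\bigl(\Theta(s,L^*)-\Theta(s,R^*)\bigr)\,ds,
\]
which is the second term of \eqref{eq:dual-gap}. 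The zero vector contributes $1$ to both $\Theta(s,L^*)$ and $\Theta(s,R^*)$, so it cancels.

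For absolute convergence, we note that for $t\ge1$ and any unit-covolume lattice $M$,
\[
 0\le\Theta(t,M)-1\le C_M e^{-\pi t\lambda_1(M)^2}.
\]
To see this, bound $\Theta(t,M)-1$ by $e^{-\pi(t-1)\lambda_1^2}(\Theta(1,M)-1)$, using $|v|^2\ge\lambda_1^2$ for every nonzero $v$. Hence both integrands decay exponentially in $t$, uniformly in $c\ge0$; in particular this also holds at $c=0$. The weights $t^{-1/2}e^{-c/t}\le1$ and $e^{-ct}\le1$ are bounded. Applying this bound to $L$, $R$, $L^*$ and $R^*$ yields absolute convergence of both integrals.

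The only point needing care is the justification of the splitting and subtraction, because the individual integrals near $t=0$ are dominated by $\Theta$'s $t^{-3/2}$ growth. That justification is handled by the finiteness of $E_\kappa$ from Theorem~\ref{thm:representation}: it ensures each $(0,1)$ piece converges for $c>0$ before Poisson summation is applied termwise.
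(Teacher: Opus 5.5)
Your proposal is correct and follows essentially the same route as the paper: subtract the two Gaussian representations, split at $t=1$, apply $\Theta(t,L)=t^{-3/2}\Theta(1/t,L^*)$ on $(0,1)$, and substitute $t\mapsto1/t$. Your bound $\Theta(t,M)-1\le e^{-\pi(t-1)\lambda_1(M)^2}(\Theta(1,M)-1)$ for $t\ge1$ is simply an explicit version of the paper's one-line appeal to Gaussian decay for absolute convergence.
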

\begin{proof}
Split \eqref{eq:gaussian} at one. On $(0,1)$ use
$\Theta(t,L)=t^{-3/2}\Theta(1/t,L^*)$ and substitute $t\mapsto1/t$.
The zero terms cancel in the difference, and Gaussian decay proves
absolute convergence.
\end{proof}

For $q>0$ and $c\ge0$, define
\[
 I(q,c)=\int_1^\infty t^{-1/2}e^{-\pi qt-c/t}\,dt,\qquad
 J(q,c)=\int_1^\infty e^{-(\pi q+c)t}\,dt
       =\frac{e^{-\pi q-c}}{\pi q+c}.
\]
For a determinant-one Gram matrix $Y=g^Tg$, write $Y[m]=m^TYm$ and put
\begin{equation}\label{eq:positive-P}
 P(Y,c)=\frac12\sum_{m\in\Z^3\setminus\{0\}}
       \{I(Y[m],c)+J(Y^{-1}[m],c)\}.
\end{equation}
We also write $P(L,c)$ for this basis-independent functional. Every
summand is positive, and both kernels are completely monotone in $q$.
Their sums and any fixed number of shape derivatives converge normally
on compact sets of nonsingular lattices and bounded screening intervals.
These assertions follow by differentiation under the positive integrals
and Gaussian domination of the resulting polynomial factors.

Let $Z(L,s)$ denote the meromorphic continuation of
$\sum_{v\in L\setminus\{0\}}|v|^{-2s}$ from $\Re s>3/2$, and define
\begin{equation}\label{eq:Coulomb}
 W(L)=\frac12Z(L,1/2)=\frac1{2\pi}Z(L^*,1),\qquad
 F_\kappa(L)=E_\kappa(L)-\frac{2\pi}{\kappa^2}-\frac\kappa2,
 \quad F_0=W.
\end{equation}
The identity for $W$ is the Epstein functional equation; the displayed
zeta values are continued values, not divergent lattice sums.

\begin{proposition}\label{prop:ewald}
For $\kappa>0$ and $c=\kappa^2/(4\pi)$,
\begin{equation}\label{eq:positive-ewald}
 2F_\kappa(L)=2P(L,c)+\frac{e^{-c}-1}{c}-2e^{-c}
                 -\kappa\operatorname{erf}(\sqrt c).
\end{equation}
The last three terms tend to $-3$ as $\kappa\downarrow0$, and
\begin{equation}\label{eq:P-difference}
 P(L,c)-P(R,c)=F_\kappa(L)-F_\kappa(R),\qquad
 W(L)=P(L,0)-\frac32.
\end{equation}
Thus $P(\cdot,c)$ and $E_\kappa$ have the same minimizers when
$\kappa>0$.
\end{proposition}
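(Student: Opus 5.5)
The plan is to rederive the Ewald splitting directly from the Gaussian representation \eqref{eq:gaussian}. Unlike Proposition~\ref{prop:dual}, we keep track of the zero-vector terms instead of cancelling them in a difference, and then evaluate the elementary leftover integrals in closed form. Throughout, $c=\kappa^2/(4\pi)$, so that $\kappa=2\sqrt{\pi c}$ and $4\pi/\kappa^2=1/c$.

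\emph{Step 1: the part $t\ge1$.} Split the integral in \eqref{eq:gaussian} at $t=1$. On $(1,\infty)$, expand $\Theta(t,L)-1$ as a sum over $m\in\Z^3\setminus\{0\}$ with Gram matrix $Y$. By Tonelli's theorem, this part equals $\sum_{m\neq0}I(Y[m],c)$.

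\emph{Step 2: the part $0<t<1$.} Use $\Theta(t,L)=t^{-3/2}\Theta(1/t,L^*)$ and substitute $s=1/t$. This gives
\[
 \int_0^1 t^{-1/2}e^{-c/t}\,t^{-3/2}\Theta(1/t,L^*)\,dt=\int_1^\infty e^{-cs}\Theta(s,L^*)\,ds .
\]
The right-hand side equals $\sum_{m\neq0}J(Y^{-1}[m],c)+e^{-c}/c$, the last term coming from the zero vector of $L^*$. The subtracted $1$ contributes $-\int_1^\infty s^{-3/2}e^{-cs}\,ds$. Integrating by parts, with $v=-2s^{-1/2}$, and using $\int_1^\infty s^{-1/2}e^{-cs}\,ds=\sqrt{\pi/c}\,\erfc(\sqrt c)$, we get
\[
 \int_1^\infty s^{-3/2}e^{-cs}\,ds=2e^{-c}-2\sqrt{\pi c}\,\erfc(\sqrt c)=2e^{-c}-\kappa\,\erfc(\sqrt c).
\]

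\emph{Step 3: assemble \eqref{eq:positive-ewald}.} Combining the two steps gives
\[
 2E_\kappa(L)=2P(L,c)+\frac{e^{-c}}c-2e^{-c}+\kappa\,\erfc(\sqrt c).
\]
Subtracting $1/c+\kappa$ and using $\erfc=1-\erf$ yields \eqref{eq:positive-ewald}. As $\kappa\downarrow0$ we have $(e^{-c}-1)/c\to-1$, $-2e^{-c}\to-2$ and $\kappa\erf(\sqrt c)\to0$, so the constant terms tend to $-3$. The first identity in \eqref{eq:P-difference} is then immediate, because the constant terms depend only on $\kappa$ and not on $L$. The same remark shows that $P(\cdot,c)$ and $E_\kappa$ have the same minimizers.

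\emph{Step 4: the identity $W(L)=P(L,0)-\frac32$.} This is the only point needing care, since the zeta values are continued values. I would use Riemann's method:
\[
 \pi^{-s}\Gamma(s)Z(L,s)=\int_0^\infty t^{s-1}\bigl(\Theta(t,L)-1\bigr)\,dt\qquad(\Re s>3/2).
\]
Split this at $1$ and apply the same Poisson substitution on $(0,1)$. This produces
\[
 \int_1^\infty t^{s-1}(\Theta(t,L)-1)\,dt+\int_1^\infty u^{1/2-s}(\Theta(u,L^*)-1)\,du+\frac1{s-3/2}-\frac1s .
\]
This expression is entire apart from the explicit poles, so it gives the continuation. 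At $s=1/2$ we have $\pi^{-1/2}\Gamma(1/2)=1$, and the rational terms sum to $-1-2=-3$. Hence
\[
 Z(L,1/2)=\sum_{m\neq0}\bigl\{I(Y[m],0)+J(Y^{-1}[m],0)\bigr\}-3=2P(L,0)-3,
\]
which is the claimed formula for $W$. Finally, $P(L,c)\to P(L,0)$ as $c\downarrow0$ by monotone convergence, since both kernels decrease in $c$ and $P(L,0)$ is finite by Gaussian decay. Together with Step 3 this gives $F_\kappa(L)\to W(L)$, which is consistent with $F_0=W$.

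I expect the main obstacle to be the bookkeeping in Step 4: correctly matching the continued value $Z(L,1/2)$, and not a divergent sum, with the constant $-3$. Everything else is elementary calculus.
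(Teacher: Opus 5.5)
Your proposal is correct and follows the paper's proof. You split the Gaussian representation at $t=1$, apply Poisson summation on $(0,1)$, evaluate the leftover integral $\int_0^1 t^{-1/2}e^{-c/t}\,dt=2e^{-c}-\kappa\erfc(\sqrt c)$ by integration by parts, and read off $W(L)=P(L,0)-\frac32$ from the regularized Mellin formula at $s=1/2$, exactly as the paper does. The differences are cosmetic: you integrate by parts after the substitution $s=1/t$ rather than $u=\sqrt{c/t}$, and you derive the regularized Mellin formula by Riemann's splitting where the paper simply quotes it.
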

\begin{proof}
Splitting \eqref{eq:gaussian} and using Poisson summation gives
\[
 2E_\kappa(L)=2P(L,c)+\frac{e^{-c}}c
                   -\int_0^1t^{-1/2}e^{-c/t}\,dt.
\]
With $u=\sqrt{c/t}$, integration by parts gives
\[
 \int_0^1t^{-1/2}e^{-c/t}\,dt
 =2\sqrt c\int_{\sqrt c}^\infty\frac{e^{-u^2}}{u^2}\,du
 =2e^{-c}-\kappa\erfc(\sqrt c).
\]
Since $2F_\kappa=2E_\kappa-1/c-\kappa$ and
$\erfc(\sqrt c)=1-\operatorname{erf}(\sqrt c)$,
subtraction gives \eqref{eq:positive-ewald}.
The regularized Mellin formula at $s=1/2$ is
\[
 Z(L,1/2)=\int_1^\infty t^{-1/2}(\Theta(t,L)-1)\,dt
        +\int_1^\infty(\Theta(t,L^*)-1)\,dt-3.
\]
It proves the assertion at zero screening and the difference identity.
\end{proof}

The direct kernel also has the elementary form
\[
 I(q,c)=\frac{e^{-\kappa\sqrt q}
          \erfc(\sqrt{\pi q}-\sqrt c)
        +e^{\kappa\sqrt q}\erfc(\sqrt{\pi q}+\sqrt c)}{2\sqrt q},
 \qquad \kappa=\sqrt{4\pi c}.
\]
A shortest opposite pair contributes $I(\lambda_1(L)^2,c)$ to
$P(L,c)$. Since $e^{-c/t}\ge e^{-c}$ for $t\ge1$,
\[
 P(L,c)\ge I(\lambda_1(L)^2,c)
 \ge e^{-c}I(\lambda_1(L)^2,0)
 =\frac{e^{-c}\erfc(\sqrt\pi\lambda_1(L))}{\lambda_1(L)}.
\]
Substitution in \eqref{eq:positive-ewald} gives
\begin{equation}\label{eq:uniformcusp}
 F_\kappa(L)\ge
 \frac{e^{-c}\erfc(\sqrt\pi\lambda_1(L))}{\lambda_1(L)}
 +\frac{e^{-c}-1}{2c}-e^{-c}
 -\frac\kappa2\operatorname{erf}(\sqrt c).
\end{equation}
The right side extends continuously to $\kappa=0$ and tends to
infinity as $\lambda_1(L)\to0$, uniformly on bounded screening
intervals. This uniform exclusion of degenerating lattices permits
passage from the Coulomb limit of each energy to the limit of the
global minima.

\begin{proposition}[The Coulomb limit]\label{thm:weak}
For every lattice $L$ with $|L|=1$ and every $\kappa>0$,
\begin{equation}\label{eq:resolvent}
 F_\kappa(L)=W(L)-\frac{\kappa^2}{8\pi^3}
 \sum_{w\in L^*\setminus\{0\}}
 \frac1{|w|^2\bigl(|w|^2+\kappa^2/(4\pi^2)\bigr)}.
\end{equation}
The correction converges absolutely, and
\begin{equation}\label{eq:weakexpansion}
 0\le F_\kappa(L)-W(L)
       +\frac{\kappa^2}{8\pi^3}Z(L^*,2)
 \le\frac{\kappa^4}{32\pi^5}Z(L^*,3).
\end{equation}
In particular $F_\kappa\to W$ locally uniformly as
$\kappa\downarrow0$. The functional $W$ attains its minimum, and
\begin{equation}\label{eq:weakminimum}
 \lim_{\kappa\downarrow0}
 \left\{\min_{|L|=1}E_\kappa(L)
              -\frac{2\pi}{\kappa^2}-\frac\kappa2\right\}
 =\min_{|L|=1}W(L).
\end{equation}
Every sequence of Yukawa minimizers with $\kappa\downarrow0$ has a
convergent subsequence, and every such limit minimizes $W$.
\end{proposition}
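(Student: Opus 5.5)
Since both sides of \eqref{eq:resolvent} are lattice sums, I will prove it by computing $P(L,c)$ in terms of $W(L)$ and then substituting into \eqref{eq:positive-ewald}. By Proposition~\ref{prop:ewald}, $W(L)=P(L,0)-\tfrac32$, so
\[
2F_\kappa(L)-2W(L)=2\bigl(P(L,c)-P(L,0)\bigr)+3+\frac{e^{-c}-1}{c}-2e^{-c}-\kappa\operatorname{erf}(\sqrt c).
\]
I will evaluate this difference summand by summand.

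For the dual kernel, $J(q,c)-J(q,0)$ is explicit. For the direct kernel I will write $I(q,c)-I(q,0)=\int_1^\infty t^{-1/2}e^{-\pi q t}(e^{-c/t}-1)\,dt$. The cleaner route is to run the Ewald splitting backwards: both $P(L,c)$ and $P(L,0)$ come from splitting the Mellin integrals at $t=1$. I therefore work at the level of the dual-side representation and use Poisson summation on the whole range. For $\kappa>0$,
\[
2E_\kappa(L)=\int_0^\infty t^{-1/2}e^{-c/t}(\Theta(t,L)-1)\,dt .
\]
Rewriting via Poisson as $\int_0^\infty t^{-2}e^{-c/t}\Theta(1/t,L^*)\,dt$ minus counterterms, and substituting $s=1/t$, the $w$-term becomes $\int_0^\infty e^{-cs}e^{-\pi s|w|^2}\,ds=1/(\pi|w|^2+c)$. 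Comparing with the Coulomb regularization, in which the $w$-term is $1/(\pi|w|^2)$, the difference per dual vector is
\[
\frac1{\pi|w|^2+c}-\frac1{\pi|w|^2}=-\frac{c}{\pi|w|^2(\pi|w|^2+c)}=-\frac{\kappa^2}{4\pi^3}\frac{1}{|w|^2(|w|^2+\kappa^2/(4\pi^2))}.
\]
This is absolutely summable, since it decays like $|w|^{-4}$ in three dimensions. Halving gives the displayed correction. The singular pieces ($w=0$, $v=0$) must reproduce exactly $2\pi/\kappa^2+\kappa/2$. I will verify this by a rigorous bookkeeping: split both the Yukawa and the Coulomb Mellin integrals at $t=1$ as in Propositions~\ref{prop:dual} and \ref{prop:ewald}, keep the direct parts for $t\ge1$ together, and interchange sum and integral only on convergent pieces. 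The constants are checked against \eqref{eq:positive-ewald}.

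I expect this constant/counterterm bookkeeping to be the main obstacle: justifying the termwise Poisson identity for the conditionally convergent dual sum. I would first try differentiating in $c$, since $\partial_c P(L,c)$ is a normally convergent sum of explicit kernels. I would then show that $\partial_c$ of both sides of \eqref{eq:resolvent} agree and that both sides agree at $c\to0$ by Proposition~\ref{prop:ewald}.

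The bounds \eqref{eq:weakexpansion} then follow from the elementary inequality $0\le \frac1{x^2(x^2+\epsilon)}-\left(\frac1{x^4}-\frac{\epsilon}{x^6}\right)\le \frac{\epsilon^2}{x^8}$ with $x=|w|$ and $\epsilon=\kappa^2/(4\pi^2)$. Multiplying by $\kappa^2/(8\pi^3)$ and summing gives $Z(L^*,2)$, $Z(L^*,3)$ and the constant $\kappa^4/(32\pi^5)$. Since $Z(L^*,s)$ is continuous on compact sets of lattices, the convergence $F_\kappa\to W$ is locally uniform. $W$ attains its minimum by Mahler compactness, using the $\kappa=0$ case of \eqref{eq:uniformcusp}. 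For \eqref{eq:weakminimum}, the bound $\min F_\kappa\le F_\kappa(L_0)\le W(L_0)=\min W$ gives the upper limit, because the correction term is nonpositive. For the lower limit, any minimizers $L_\kappa$ satisfy a uniform lower bound on $\lambda_1$ by \eqref{eq:uniformcusp}, so they lie in a fixed compact set. There $F_\kappa\ge W-C\kappa^2$, which yields the lower limit. Extracting convergent subsequences and using locally uniform convergence shows that every limit minimizes $W$.
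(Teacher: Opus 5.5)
The gap is in the one nontrivial step, the identity \eqref{eq:resolvent}. You derive it only formally and then defer it (``I will verify this by a rigorous bookkeeping'', ``I would first try'').

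Your termwise computation manipulates infinite quantities:
\begin{itemize}
\item $\int_0^\infty e^{-cs}\Theta(s,L^*)\,ds$ diverges at $s=0$;
\item the counterterm $\int_0^\infty t^{-1/2}e^{-c/t}\,dt$ diverges at infinity;
\item $\sum_{w\ne0}(\pi|w|^2+c)^{-1}$ is a positive series with terms of size $|w|^{-2}$ in $\R^3$, so it diverges outright. It is not conditionally convergent.
\end{itemize}
The Coulomb ``$w$-term $1/(\pi|w|^2)$'' likewise has meaning only through the continued value $Z(L^*,1)$. Hence the constants $1/c=4\pi/\kappa^2$ and $\kappa$ that the singular pieces ``must reproduce'' are asserted, not proved, and they are the entire content of the identity.

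The missing idea is to form the Yukawa--Coulomb difference before exchanging sum and integral. The paper sets $Q_L(t)=\Theta(t,L)-1-t^{-3/2}$. Then both $2E_\kappa(L)-1/c=\int_0^\infty t^{-1/2}e^{-c/t}Q_L(t)\,dt$ (using $\int_0^\infty t^{-2}e^{-c/t}\,dt=1/c$) and $Z(L,1/2)=\int_0^\infty t^{-1/2}Q_L(t)\,dt$ converge absolutely. Subtracting and putting $t=1/u$ gives $\int_0^\infty(e^{-cu}-1)(\Theta(u,L^*)-1-u^{-3/2})\,du$. The factor $1-e^{-cu}=O(u)$ now tames the $u^{-3/2}$ growth of $\Theta(u,L^*)-1$. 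So Tonelli applies to the sign-definite dual sum and yields $\sum_{w\ne0}\{(\pi|w|^2+c)^{-1}-(\pi|w|^2)^{-1}\}$ termwise. The remaining integral $\int_0^\infty(1-e^{-cu})u^{-3/2}\,du=2\sqrt{\pi c}=\kappa$ supplies the second constant.

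Your differentiation fallback also closes the gap, and it avoids the counterterm: the extra factor of $s$ makes $\int_0^1 se^{-cs}\Theta(s,L^*)\,ds$ convergent. Poisson-transforming the direct part of $\partial_c(2P)$ to $(0,1)$ and adding the dual part gives
\[
 \partial_c\bigl(2P(L,c)\bigr)
 =-\int_0^1se^{-cs}\,ds+\int_0^1s^{-1/2}e^{-cs}\,ds
  -\sum_{w\ne0}\frac1{(\pi|w|^2+c)^2}.
\]
In \eqref{eq:positive-ewald}, the derivatives of $(e^{-c}-1)/c$ and of $-2e^{-c}-\kappa\operatorname{erf}(\sqrt c)$ are exactly $\int_0^1se^{-cs}\,ds$ and $-\sqrt{\pi/c}\,\operatorname{erf}(\sqrt c)=-\int_0^1s^{-1/2}e^{-cs}\,ds$. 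These cancel the first two terms above. Both sides of $2F_\kappa-2W=\sum_{w\ne0}\{(\pi|w|^2+c)^{-1}-(\pi|w|^2)^{-1}\}$ tend to $0$ as $c\downarrow0$, so integrating from $c=0$ proves \eqref{eq:resolvent}. This computation, however, has to appear in the proof.

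Two smaller points.
\begin{itemize}
\item The inequality you quote for \eqref{eq:weakexpansion} is one order too high. Its left half gives the upper bound, but its right half produces a $Z(L^*,4)$ term instead of the lower bound $0$. What is needed is $0\le x^{-4}-\frac1{x^2(x^2+\epsilon)}=\frac{\epsilon}{x^4(x^2+\epsilon)}\le\epsilon x^{-6}$.
\item Your compactness and limit arguments are correct. The bound $F_\kappa\ge W-C\kappa^2$ on the compact set even gives an $O(\kappa^2)$ rate in \eqref{eq:weakminimum}, slightly more than the paper's subsequence argument.
\end{itemize}
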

\begin{proof}
Put $Q_L(t)=\Theta(t,L)-1-t^{-3/2}$. Poisson summation shows that
$Q_L(t)=-1+O(t^{-3/2}e^{-d/t})$ at zero and
$Q_L(t)=-t^{-3/2}+O(e^{-dt})$ at infinity, for some $d>0$.
Hence $t^{-1/2}Q_L(t)$ is integrable, and the regularized Mellin formula
reads $Z(L,1/2)=\int_0^\infty t^{-1/2}Q_L(t)\,dt$.
Since $\int_0^\infty t^{-2}e^{-c/t}\,dt=1/c$, we also have
\begin{equation}\label{eq:subtracted-integral}
 2E_\kappa(L)-\frac1c
 =\int_0^\infty t^{-1/2}e^{-c/t}Q_L(t)\,dt.
\end{equation}
Poisson summation gives the explicit identity
\[
 u^{-3/2}Q_L(1/u)=\Theta(u,L^*)-1-u^{-3/2}.
\]
Subtract the Mellin identity from \eqref{eq:subtracted-integral}
and put $t=1/u$. The weighted measure transforms as
$t^{-1/2}\,dt=-u^{-3/2}\,du$, so
\begin{align*}
 2E_\kappa(L)-\frac1c-Z(L,1/2)
 &=\int_0^\infty(e^{-cu}-1)
          (\Theta(u,L^*)-1-u^{-3/2})\,du\\
 &=\sum_{w\in L^*\setminus\{0\}}
   \left\{\frac1{\pi|w|^2+c}-\frac1{\pi|w|^2}\right\}+\kappa.
\end{align*}
After separating $u^{-3/2}$, the theta integral is absolutely
convergent: its integrand is $O(u^{-1/2})$ at zero and decays
exponentially at infinity. Summation and integration therefore commute.
The remaining term is
$\int_0^\infty(1-e^{-cu})u^{-3/2}\,du
 =2\sqrt{\pi c}=\kappa$.
Thus $F_\kappa(L)-W(L)$ is one half of the reciprocal sum above.
The identity
\[
 \frac12\left(\frac1{\pi|w|^2+c}-\frac1{\pi|w|^2}\right)
 =-\frac{c}{2\pi^2|w|^2\bigl(|w|^2+c/\pi\bigr)},
\]
together with $c/\pi=\kappa^2/(4\pi^2)$ and
$c/(2\pi^2)=\kappa^2/(8\pi^3)$, proves \eqref{eq:resolvent}.

The identity
\[
 \frac1{q(q+x)}=\frac1{q^2}-\frac{x}{q^2(q+x)}
 \qquad(q,x>0)
\]
gives
\[
 F_\kappa(L)-W(L)+\frac{\kappa^2}{8\pi^3}Z(L^*,2)
 =\frac{\kappa^4}{32\pi^5}
 \sum_{w\in L^*\setminus\{0\}}
 \frac1{|w|^4\bigl(|w|^2+\kappa^2/(4\pi^2)\bigr)}.
\]
Since $|w|^2+\kappa^2/(4\pi^2)\ge|w|^2$, this proves
\eqref{eq:weakexpansion}.
Normal convergence bounds $Z(L^*,2)$ and $Z(L^*,3)$ on compact sets of
lattice classes, proving locally uniform convergence.

For any fixed $\kappa_0>0$, the right side of
\eqref{eq:uniformcusp} tends to infinity as $\lambda_1(L)\to0$,
uniformly for $0\le\kappa\le\kappa_0$: the lattice-independent
terms in \eqref{eq:uniformcusp} extend continuously to $c=0$, and
$e^{-c}$ is bounded below by a positive constant. Thus $W$ has compact sublevels
by Mahler's theorem and attains a minimum. Moreover, a minimizer
$L_\kappa$ of $F_\kappa$ satisfies
\[
 F_\kappa(L_\kappa)\le F_\kappa(\B)\le W(\B),
\]
where the last inequality follows from \eqref{eq:resolvent}.
Consequently all these minimizers, for
$0<\kappa\le\kappa_0$, lie in one compact set.

If $\kappa_j\downarrow0$, choose a convergent subsequence of minimizers
with limit $L_0$. Locally uniform convergence gives
$F_{\kappa_j}(L_{\kappa_j})\to W(L_0)$. For any fixed competitor $R$,
$F_{\kappa_j}(L_{\kappa_j})\le F_{\kappa_j}(R)$; passing to the limit
shows that $L_0$ minimizes $W$. Every sequence admits such a subsequence,
and its limiting minimum value is always $\min W$.
This proves \eqref{eq:weakminimum} and the assertion about minimizers.
\end{proof}

\section{An elementary compact reduction}\label{sec:reduction}

We reduce the global problem to an explicit compact set of lattice
shapes. The reduction is uniform in the screening parameter, including
the continued Coulomb endpoint. Throughout this section $c\ge0$ denotes
the screening variable in the positive Ewald functional $P(L,c)$.

\Needspace{18\baselineskip}
\begin{proposition}[Compact reduction]\label{prop:compact-reduction}
For every $c\ge0$, the functional $P(\cdot,c)$ attains its minimum
among lattices $L$ with $|L|=1$. Every global minimizer satisfies
$\lambda_1(L)>4/5$ and has a Gram matrix of the form
\begin{equation}\label{eq:reduced-iwasawa}
 Y=N^TDN,\qquad
 N=\begin{pmatrix}1&x_1&x_2\\0&1&x_3\\0&0&1\end{pmatrix},
 \qquad D=\operatorname{diag}(d_1,d_2,d_3),
\end{equation}
where, on putting $y_i=e^{\ell_i}$,
\[
 d_1=y_1^{2/3}y_2^{1/3},\qquad
 d_2=y_1^{-1/3}y_2^{1/3},\qquad
 d_3=y_1^{-1/3}y_2^{-2/3},
\]
and
\begin{equation}\label{eq:endpoint-rational-root}
 (y_1,y_2,x_1,x_2,x_3)\in
 \left[\frac{11}{25},\frac43\right]
 \times\left[\frac{147}{1000},\frac43\right]
 \times\left[0,\frac12\right]\times[0,1]
 \times\left[0,\frac12\right].
\end{equation}
In the basis constructed below, $d_1=\lambda_1(L)^2$.
\end{proposition}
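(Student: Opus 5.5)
Existence of a minimizer of $P(\cdot,c)$ for each $c\ge0$ comes from the earlier compactness arguments. For $c>0$, Proposition~\ref{prop:ewald} shows that $P(\cdot,c)$ differs from $F_\kappa$ by a lattice-independent constant, so Theorem~\ref{thm:representation} gives a minimizer. For $c=0$, we have $P(\cdot,0)=W+3/2$, and Proposition~\ref{thm:weak} shows that $W$ attains its minimum. The remaining work is quantitative and has two parts: a shortest-vector bound excluding the cusp, and a lattice reduction placing every minimizer in the rectangle \eqref{eq:endpoint-rational-root}.

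For the cusp, I would compare a minimizer $L$ with $\B$ (or $\F$), uniformly in $c\ge0$. Since every summand of $P$ is positive, retaining only the opposite pair $\pm v$ of shortest vectors gives
\[
P(L,c)\ge I(\lambda_1^2,c)\ge e^{-c}\,\frac{\erfc(\sqrt\pi\lambda_1)}{\lambda_1}.
\]
The factor $e^{-c}$ is harmful for large $c$. To avoid it, I would bound $P(\B,c)$ from above by an explicit quantity, namely its nearest shells plus a Gaussian tail, decaying at least as fast as $I(\lambda_1(\B)^2,c)$. I would then use the monotonicity of $I(q,c)$ in $q$ and the fact that $I(q,c)/I(q',c)$ for $q<q'$ grows with $c$ (visible from the elementary $\erfc$ form, where the leading behavior is $e^{-\kappa\sqrt q}$). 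If $\lambda_1(L)\le 4/5<\lambda_1(\B)$, the single pair would then already exceed $P(\B,c)$. The regime $c$ small is handled with explicit constants. For large $c$, the leading exponential factors $e^{-\kappa\cdot 4/5}$ versus $e^{-\kappa\lambda_1(\B)}$ dominate, with $\lambda_1(\B)=\sqrt3\,2^{-2/3}\approx1.09$. An intermediate range is checked by a finite interval computation in $c$.

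For the reduction, I would choose a Minkowski-type reduced basis: $b_1$ a shortest vector, so $d_1=\lambda_1^2$; $b_2$ shortest modulo $b_1$; and $b_3$ shortest modulo $\langle b_1,b_2\rangle$. Size reduction, together with sign changes and swaps of the basis vectors, gives $|x_1|,|x_3|\le1/2$ and $|x_2|\le1/2$. Reflections that flip signs make $x_1,x_3\ge0$. The sign of $x_2$ cannot be fixed simultaneously, and shifting it gives the range $x_2\in[0,1]$. The Iwasawa parameters satisfy $y_1^2=d_1/d_2$ and $y_2^2=d_2/d_3$ (up to the stated normalization, with $d_1d_2d_3=1$). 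Reduction gives $d_2\ge\tfrac34 d_1$ and $d_3\ge\tfrac34 d_2$, which yield the upper bounds $4/3$ (after checking the exponent convention). For the lower bounds, I would use $d_1=\lambda_1^2>16/25$ together with $d_1\le$ Hermite's bound $\gamma_3=2^{1/3}$ for the squared minimum, since $\lambda_1^2\le\sqrt2$. I would also need an upper bound on $d_1d_2$, i.e.\ on the second successive minimum. This I would obtain by repeating the energy comparison with the two-dimensional sublattice: a very long $b_3$ forces a dense plane, and $P$ is large on dense planes because the dual contains a short vector $w$ with $|w|=1/|b_1\wedge b_2|$. The dual kernel $J$ then penalizes via $J(|w|^2,c)$ on the pair $\pm w$. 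A very small $d_3$ gives a short dual vector, and comparing $J(\lambda_1(L^*)^2,c)$ with $P(\B,c)$ as before bounds $\lambda_1(L^*)$ below, which yields $y_2\ge147/1000$.

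The main obstacle is making both the direct comparison (through $I$) and the dual comparison (through $J$) uniform over all $c\in[0,\infty)$, especially the dual one. $J$ decays like $e^{-c}$ independently of $q$, so the dual penalty weakens relative to $P(\B,c)$ as $c$ grows. For large $c$ I would instead bound $y_2$ using the direct side: a short dual vector forces many short direct vectors in a dense plane, and the lower bound on the count of such vectors, weighted by $I$, beats $P(\B,c)$. An interval computation would confirm the constants $11/25$ and $147/1000$.
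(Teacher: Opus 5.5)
Your overall plan matches the paper's: a uniform cusp exclusion using a single shortest pair, then a basis whose first vector is shortest and whose second vector has shortest projection onto $b_1^\perp$. Size reduction and sign changes then give $0\le x_1,x_3\le\frac12$, $0\le x_2\le1$ and $y_1,y_2\le\frac43$.

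For the cusp, the ratio monotonicity you invoke is exactly the paper's device. Applied term by term, it makes your split into small, intermediate and large $c$ unnecessary:
\begin{itemize}
\item For direct terms with $q\ge\frac{16}{25}$, a Chebyshev correlation inequality gives $I(q,c)/I(\frac{16}{25},c)\le I(q,0)/I(\frac{16}{25},0)$. It pairs the increasing $e^{-c/t}$ with the decreasing $e^{-\pi(q-16/25)t}$.
\item For dual terms, $J(q,c)\le e^{-c}J(q,0)$ and $I(\frac{16}{25},c)\ge e^{-c}I(\frac{16}{25},0)$ give the same bound for every $q>0$.
\end{itemize}
Summing over the reference reduces all $c\ge0$ to the single scalar check $P(R,0)<I(\frac{16}{25},0)$.

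The genuine gap is in the lower bounds $y_1\ge\frac{11}{25}$ and $y_2\ge\frac{147}{1000}$. You claim these need a further energy comparison: an upper bound on $d_1d_2$ through the dual kernel, replaced for large $c$ by an unspecified direct-side counting argument. That step is never carried out. As you note yourself, $J(q,c)\le e^{-c}/c$ cannot dominate $P(\B,c)$ for large $c$, so as written the argument does not close.

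No such comparison is needed. The correct convention is $y_1=d_1/d_2$ and $y_2=d_2/d_3$, not $y_i^2$. Then $d_1d_2d_3=1$ gives $d_1^3=y_1^2y_2$, and your reduction already gives $y_1,y_2\le\frac43$. Hence $d_1=\lambda_1(L)^2>\frac{16}{25}$ alone yields
\[
 y_1^2=\frac{d_1^3}{y_2}\ge\frac34d_1^3>\frac{3072}{15625}>\left(\frac{11}{25}\right)^2,
 \qquad
 y_2=\frac{d_1^3}{y_1^2}\ge\frac9{16}d_1^3>\frac{2304}{15625}>\frac{147}{1000}.
\]
Equivalently, $d_3\ge\frac34d_2\ge\frac9{16}d_1>\frac9{25}$. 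So a long $b_3$ over a dense plane is impossible once $\lambda_1>\frac45$. Hermite's bound plays no role; it would read $\lambda_1^2\le2^{1/3}$, not $\sqrt2$, in any case.

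One caution: $d_3\ge\frac34d_2$ really requires the projected-shortest choice of $b_2$ that you describe, not a Minkowski-reduced basis with swaps. For example, the FCC basis $(1,0,0)$, $(0,1,0)$, $(\frac12,\frac12,\frac1{\sqrt2})$ is Minkowski-reduced but has $d_2/d_3=2$.
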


We first choose a shortest-vector basis and reduce its two-dimensional
orthogonal projection. A comparison of the Ewald kernel ratios then
excludes the cusp for every $c\ge0$. The resulting rectangle retains
all five shape parameters; it does not restrict the problem to a
symmetric family of lattices. We also record the separate convexity
which will control every shape rectangle by its corners.

\begin{lemma}[Elementary covering reduction]\label{lem:elementary-reduction}
Every lattice $L$ with $|L|=1$ has a basis with Gram matrix
\eqref{eq:reduced-iwasawa} satisfying
$0\le x_1,x_3\le1/2$, $0\le x_2\le1$, and
\begin{align}\label{eq:reduction-inequalities}
 1&\le(1+x_1-x_2)^2+y_1^{-1}\bigl((1-x_3)^2+y_2^{-1}\bigr),\notag\\
 1&\le(x_1-x_2)^2+y_1^{-1}\bigl((1-x_3)^2+y_2^{-1}\bigr),\notag\\
 1&\le x_1^2+y_1^{-1},\notag\\
 1&\le x_2^2+y_1^{-1}(x_3^2+y_2^{-1}),\\
 1&\le(x_2-1)^2+y_1^{-1}(x_3^2+y_2^{-1}),\notag\\
 1&\le x_3^2+y_2^{-1}.\notag
\end{align}
Its first basis vector is shortest, so $d_1=\lambda_1(L)^2$.
\end{lemma}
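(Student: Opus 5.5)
The plan is to construct the basis explicitly and then read off each inequality as a statement that some lattice vector is at least as long as a shortest one. Write the basis as $b_1,b_2,b_3$ with $Y_{ij}=b_i\cdot b_j$, and set $Y=N^TDN$ as in \eqref{eq:reduced-iwasawa}.

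\textbf{Decoding $Y=N^TDN$.} This is the Gram--Schmidt (Iwasawa) decomposition: $d_1=|b_1|^2$, $d_2$ is the squared length of the projection $p_2$ of $b_2$ onto $b_1^\perp$, and $d_3$ is the squared distance from $b_3$ to $\mathrm{span}(b_1,b_2)$. The coefficients are
\[
x_1=\frac{b_1\cdot b_2}{|b_1|^2},\qquad
x_2=\frac{b_1\cdot b_3}{|b_1|^2},\qquad
x_3=\frac{p_2\cdot p_3}{|p_2|^2},
\]
where $p_3$ is the projection of $b_3$ onto $b_1^\perp$. Since $d_1d_2d_3=\det Y=1$, the parameters $y_1=d_1/d_2$ and $y_2=d_2/d_3$ are positive. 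These recover exactly the displayed formulas for $d_i$, so $\ell_i=\log y_i$ is defined.

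\textbf{Reading the inequalities.} For $m\in\Z^3$ we have $Y[m]=\sum_k d_k\,(Nm)_k^2$. Dividing by $d_1$ and using $d_2/d_1=y_1^{-1}$ and $d_3/d_1=y_1^{-1}y_2^{-1}$, the six inequalities in \eqref{eq:reduction-inequalities} become the following.
\begin{itemize}
\item $Y[m]\ge Y[e_1]$ for $m=e_1+e_2-e_3$ (up to sign), $m=e_3-e_2$, $m=e_2$, $m=e_3$ and $m=e_3-e_1$, in that order. For example, $m=(-1,-1,1)$ gives first coordinate $(Nm)_1=x_2-x_1-1$, which produces the first line.
\item The last inequality says $d_2x_3^2+d_3\ge d_2$, i.e. $|p_3|\ge|p_2|$.
\end{itemize}
Hence the first five hold automatically once $b_1$ is a shortest nonzero vector. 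That choice also gives $d_1=\lambda_1(L)^2$. The sixth requires $p_2$ to be a shortest vector of the projected lattice.

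\textbf{Construction.}
\begin{enumerate}
\item Choose $b_1$ with $|b_1|=\lambda_1(L)$. It is primitive, so it extends to a basis.
\item The orthogonal projection of $L$ onto $b_1^\perp$ is a two-dimensional lattice. Take a Lagrange--Gauss reduced basis $p_2,p_3$ of it, so that $|p_2|\le|p_3|$ and $|p_2\cdot p_3|\le\tfrac12|p_2|^2$. Lift these to $b_2,b_3\in L$; then $b_1,b_2,b_3$ is a basis of $L$. This already gives the sixth inequality and $|x_3|\le1/2$.
\item Replace $b_3$ by $-b_3$ if needed to make $x_3\ge0$. This flips $p_3$ and leaves the reduction intact.
\item Translate $b_2$ by an integer multiple of $b_1$ so that $|x_1|\le1/2$. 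This does not change $p_2$, $p_3$ or $x_3$.
\item If $x_1<0$, replace $b_1$ by $-b_1$. This flips the signs of $x_1$ and $x_2$ and leaves $x_3$ and all $d_i$ unchanged.
\item Finally translate $b_3$ by an integer multiple of $b_1$ so that $x_2\in[0,1)$. This leaves $x_1$, $x_3$ and the $d_i$ unchanged.
\end{enumerate}
Every step preserves shortestness of $b_1$ and the projected reduction. The basis may lose positive orientation along the way, but the Gram matrix, the covolume and the decomposition are unaffected.

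The only real obstacle is bookkeeping. Each sign change or translation must be checked not to undo an earlier normalization, which is why the order above (reduce the projection, fix $x_3$, then $x_1$, then $x_2$) matters. The remaining work is verifying the five $m$-vectors against the displayed expressions, which is a direct expansion of $(Nm)_k$.
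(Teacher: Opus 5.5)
Your proof is correct and follows the paper's argument. You take a shortest vector $b_1$ and a shortest vector of the projection onto $b_1^\perp$; your Lagrange--Gauss basis merges the paper's choice of $w_2$ with its translation of $v_3$ by $v_2$. You then normalize the shears by translations and sign changes, and read off the inequalities from the same test vectors: $\pm(e_1+e_2-e_3)$, $\pm(e_2-e_3)$, $e_2$, $e_3$, $e_3-e_1$, together with $|p_3|\ge|p_2|$ for the sixth. The only cosmetic difference is that you translate $b_3$ directly into $x_2\in[0,1)$, whereas the paper first reduces to $|x_2|\le1/2$ and then replaces $v_3$ by $v_3+v_1$ when $x_2<0$.
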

\begin{proof}
Choose a shortest nonzero vector $v_1\in L$. It is primitive and hence
extends to an integral basis. If $p$ is orthogonal projection onto
$v_1^\perp$, then $p(L)$ is a rank-two lattice and
$\ker(p|_L)=\mathbb Zv_1$. Indeed, projection of the other two vectors
of any basis beginning with $v_1$ gives two linearly independent
vectors generating $p(L)$.
Choose a shortest nonzero vector $w_2\in p(L)$. It is primitive, so it
extends to a basis $(w_2,w_3)$ of $p(L)$. Choose lifts $v_2,v_3\in L$.
Then $(v_1,v_2,v_3)$ is a basis of $L$: projecting an arbitrary vector
first expresses its image in terms of $w_2,w_3$, and the remaining
vector belongs to $\mathbb Zv_1$.

Gram--Schmidt gives an orthonormal frame $(e_1,e_2,e_3)$ and positive
numbers $d_i$ such that
\[
 v_1=\sqrt{d_1}e_1,\quad
 v_2=x_1\sqrt{d_1}e_1+\sqrt{d_2}e_2,\quad
 v_3=x_2\sqrt{d_1}e_1+x_3\sqrt{d_2}e_2+\sqrt{d_3}e_3.
\]
Add an integral multiple of $v_1$ to $v_2$ to arrange
$|x_1|\le1/2$. Add an integral multiple of $v_2$ to $v_3$ to arrange
$|x_3|\le1/2$, and then an integral multiple of $v_1$ to arrange
$|x_2|\le1/2$. These changes preserve the basis and the shortest
projected vector $p(v_2)$.
Changing the signs of the basis vectors sends the shears to
$(s_1s_2x_1,s_1s_3x_2,s_2s_3x_3)$, with $s_i\in\{\pm1\}$.
Take $s_2=1$ and choose $s_1,s_3$ to make $x_1,x_3$ nonnegative.
If the resulting $x_2$ is negative, replace $v_3$ by $v_3+v_1$.
Then $0\le x_2\le1$, with $x_1,x_3$ unchanged.

The five nonzero vectors
$v_1+v_2-v_3$, $v_2-v_3$, $v_2$, $v_3$, and $v_3-v_1$ have squared
length at least $d_1$. Computing these lengths and dividing by $d_1$
gives the first five inequalities in \eqref{eq:reduction-inequalities}.
The shortest-vector property of $p(v_2)$ gives
$x_3^2d_2+d_3\ge d_2$, which is the sixth inequality.
Finally, $d_1d_2d_3=|L|^2=1$; the displayed formulas for $d_i$
follow from $y_1=d_1/d_2$ and $y_2=d_2/d_3$.
\end{proof}

\begin{lemma}[Uniform cusp exclusion]\label{lem:endpoint-cusp}
For every $c\ge0$, if $\lambda_1(L)\le4/5$, then
\[
 P(L,c)>P(\F,c).
\]
\end{lemma}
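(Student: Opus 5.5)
The plan is to compare a lower bound for $P(L,c)$, coming only from the shortest opposite pair, with an upper bound for $P(\F,c)$, and to do this uniformly in $c\ge0$. Every summand of $P$ is positive, so, as in the derivation of \eqref{eq:uniformcusp},
\[
 P(L,c)\ge I(\lambda_1^2,c)\ge e^{-c}\,\frac{\erfc(\sqrt\pi\lambda_1)}{\lambda_1},
\]
where $\lambda_1=\lambda_1(L)$. A lower bound this crude cannot beat $P(\F,c)$ for all $c$. As $c$ grows, $P(\F,c)$ decays roughly like $e^{-\kappa a}$ from the direct part, while $e^{-c}=e^{-\kappa^2/(4\pi)}$ decays faster. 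So I would avoid the factor $e^{-c}$ and keep $I(\lambda_1^2,c)$ exactly. I would also sum over the full short vectors. I would then compare termwise ratios of kernels, which is the "comparison of Ewald kernel ratios" announced before the lemma.

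The key steps, in order, are as follows.

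First, bound $P(\F,c)$ above. Split it as $\frac12\sum_{m}I(\F[m],c)$ over the $12$ shortest vectors of squared length $a^2=2^{1/3}$, plus the tail $\frac12\sum I$ over the remaining vectors, plus $\frac12\sum J(\B[m],c)$ over the dual BCC. Since both $I$ and $J$ are decreasing in $q$, and $J(q,c)\le e^{-\pi q-c}/(\pi q)$, each tail is at most an explicit theta-type sum times a majorant of one kernel evaluated at the second shell.

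Second, show that the ratio $R(q,c)=I(q,c)/I(a^2,c)$ is nonincreasing in $c$ for $q<a^2$. By Cauchy--Schwarz, or log-convexity of the Laplace-type integral $c\mapsto I(q,c)$ in the family $e^{-\pi qt}$, this ratio is actually increasing in $c$ when $q<a^2$. The reason is that the shorter vector's kernel decays like $e^{-\kappa\sqrt q}$, slower than $e^{-\kappa a}$. Hence
\[
 \frac{I(\lambda_1^2,c)}{I(a^2,c)}\ge\frac{I(\lambda_1^2,0)}{I(a^2,0)}.
\]
Similarly, $J(q',c)/I(a^2,c)$ and the tail ratios $I(q,c)/I(a^2,c)$ for $q>a^2$ should be bounded by their values at a controlled $c$, because those terms decay faster in $c$. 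For $J$ this needs care: $J$ decays like $e^{-c}$, which is faster than $I(a^2,c)\approx e^{-\kappa a}$ only once $c$ is large, so I would bound $\sup_c J/I(a^2,c)$ directly.

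Third, reduce to a one-variable inequality:
\[
 I(\lambda_1^2,0)\ \cdot\ \inf_{c}\frac{I(a^2,c)}{I(a^2,0)}\cdots
\]
More simply, I want to prove
\[
 I(\lambda_1^2,c)>6I(a^2,c)+T(c)
\]
with $T(c)/I(a^2,c)$ bounded by an explicit constant $K$. By the ratio monotonicity, it suffices to check $I(\lambda_1^2,0)/I(a^2,0)>6+K$ for $\lambda_1\le 4/5$. The left side is decreasing in $\lambda_1$, so only $\lambda_1=4/5$ needs checking, against $a\approx1.122$. The ratio there is about $(\erfc(\sqrt\pi\cdot0.8)/0.8)/(\erfc(\sqrt\pi a)/a)$, which is roughly $e^{\pi(a^2-0.64)}\approx e^{1.95}$ times modest factors. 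This only comes to about $10$, and it must exceed $6+K$. It may also help to add the contribution of other short vectors of $L$, since a lattice with $\lambda_1\le4/5$ and covolume $1$ has a second successive minimum limited by the reduced conditions.

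I expect the main obstacle to be this last numeric margin, together with a uniform control of the dual $J$-tail for all $c$. If the single-pair ratio is too tight, I would fall back on a split in $c$. For $c\le c_0$ I would use the $e^{-c}$ bound and an interval evaluation at finitely many $c$, using monotonicity in $c$ on subintervals. For $c\ge c_0$ I would use the asymptotic ratio $e^{\kappa(a-\lambda_1)}\to\infty$, made rigorous by the exact erfc formula for $I$.
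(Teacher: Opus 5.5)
Your architecture matches the paper's. You keep the shortest pair exactly, $P(L,c)\ge I(\lambda_1^2,c)\ge I(16/25,c)$. You use that $I(q,c)/I(q_0,c)$ is nonincreasing in $c$ for $q\ge q_0$ to push every FCC term back to $c=0$, and you finish with one scalar inequality. Normalizing by $I(a^2,c)$ instead of the paper's $I(16/25,c)$ is cosmetic. After the two ratio bounds, your condition $I(16/25,0)/I(a^2,0)>6+K$ is exactly $I(16/25,0)>P(\F,0)$, which is Lemma~\ref{lem:cusp-scalar}. The monotonicity comes from Chebyshev's inequality, not Cauchy--Schwarz. For $t,u\ge1$ the factors $e^{-c/t}$ and $e^{-\pi(q-q_0)t}$ are oppositely monotone. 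Integrating $(e^{-c/t}-e^{-c/u})(e^{-\pi(q-q_0)t}-e^{-\pi(q-q_0)u})\le0$ against $(tu)^{-1/2}e^{-\pi q_0(t+u)}$ gives $I(q,c)I(q_0,0)\le I(q,0)I(q_0,c)$.

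Two points remain open in your proposal. First, the dual terms need neither a supremum over $c$ nor a split of the screening range. Since $e^{-c/t}\ge e^{-c}$ on $[1,\infty)$, you have $I(q_0,c)\ge e^{-c}I(q_0,0)$, while $J(q,c)\le e^{-c}J(q,0)$. Hence $J(q,c)/I(q_0,c)\le J(q,0)/I(q_0,0)$ for all $c\ge0$ and $q>0$. Your concern that $I(a^2,c)$ decays like $e^{-\kappa a}$, faster than $e^{-c}$, overlooks this lower bound. So $P(\F,c)/I(q_0,c)\le P(\F,0)/I(q_0,0)$ uniformly, and the lemma is equivalent to its $c=0$ case.

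Second, your numerics are off, and this matters because the scalar inequality is tight. The ratio is
$I(16/25,0)/I(a^2,0)=\frac54\erfc(4\sqrt\pi/5)\,a/\erfc(\sqrt\pi a)\approx12.87$, not about $10$. The BCC dual sum alone makes $K\approx6.8$, so $6+K\approx12.80$. In absolute terms this is $I(16/25,0)\approx0.05616$ against $P(\F,0)\approx0.05586$, a margin well under one percent. Your fallbacks are therefore unnecessary. But the proof is complete only once this inequality is certified with explicit shell tails. The paper bounds the FCC direct sum by $0.026459$ and the BCC dual sum by $0.029536$ (shells $\ell\le12$ plus a geometric tail), and shows $I(16/25,0)>7/125$. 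As written, your proposal leaves this decisive step undone, and your estimate even predicts it might fail.
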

\begin{proof}
For $q\ge16/25$, we claim that
\begin{equation}\label{eq:cusp-kernel-ratios}
 \frac{I(q,c)}{I(16/25,c)}\le\frac{I(q,0)}{I(16/25,0)},\qquad
 \frac{J(q,c)}{I(16/25,c)}\le\frac{J(q,0)}{I(16/25,0)}.
\end{equation}
The functions $t\mapsto e^{-c/t}$ and
$t\mapsto e^{-\pi(q-16/25)t}$ are respectively increasing and
decreasing on $[1,\infty)$. Hence, for $t,u\ge1$,
\[
 (e^{-c/t}-e^{-c/u})
 (e^{-\pi(q-16/25)t}-e^{-\pi(q-16/25)u})\le0.
\]
Multiply by $(tu)^{-1/2}e^{-(16\pi/25)(t+u)}$, integrate in $t,u$,
and expand. The resulting inequality is
\[
 I(q,c)I(16/25,0)\le I(q,0)I(16/25,c),
\]
which proves the first ratio bound. For the second, use
$J(q,c)\le e^{-c}J(q,0)$ and
$I(16/25,c)\ge e^{-c}I(16/25,0)$.
This argument proves the second ratio bound for every $q>0$.

All nonzero FCC vectors have squared length at least $2^{1/3}>16/25$.
Apply the first ratio bound to the direct FCC sum and the second,
which holds for every $q>0$, to the reciprocal BCC sum. Summing gives
\[
 \frac{P(\F,c)}{I(16/25,c)}
 \le\frac{P(\F,0)}{I(16/25,0)}<1.
\]
The strict scalar inequality is proved in Lemma~\ref{lem:cusp-scalar}
of the appendix. A shortest opposite pair in $L$ contributes
$I(\lambda_1(L)^2,c)$ to $P(L,c)$. Since $I(q,c)$ decreases in $q$,
\[
 P(L,c)\ge I(\lambda_1(L)^2,c)\ge I(16/25,c)>P(\F,c),
\]
as required.
\end{proof}

\begin{proof}[Proof of Proposition~\ref{prop:compact-reduction}]
It suffices to minimize over lattices with energy at most $P(\F,c)$.
Lemma~\ref{lem:endpoint-cusp} implies that every such lattice has
$\lambda_1(L)>4/5$. Choose the basis in
Lemma~\ref{lem:elementary-reduction}. Since $0\le x_1,x_3\le1/2$,
the third and sixth inequalities in \eqref{eq:reduction-inequalities}
give
\[
 y_1^{-1}\ge1-x_1^2\ge\frac34,\qquad
 y_2^{-1}\ge1-x_3^2\ge\frac34.
\]
Thus $y_1,y_2\le4/3$. Also
$d_1^3=y_1^2y_2>(16/25)^3$. Hence
\[
 y_1^2>\frac34\left(\frac{16}{25}\right)^3
 =\frac{3072}{15625}>\left(\frac{11}{25}\right)^2,
 \qquad
 y_2>\left(\frac34\right)^2\left(\frac{16}{25}\right)^3
 =\frac{2304}{15625}>\frac{147}{1000}.
\]
Thus all relevant lattices have representatives in
\eqref{eq:endpoint-rational-root}. The corresponding closed rectangle
of logarithmic coordinates is compact, and its continuous Gram image
consists of positive definite determinant-one matrices. The Ewald sums
converge normally on this image, so $P(\cdot,c)$ is continuous there
and attains a minimum. The rectangle contains a representative of
$\F$, and every lattice outside its image either has energy larger
than $P(\F,c)$ or has another representative inside. Its minimum is
therefore the global minimum. The cusp comparison proves the asserted
strict bound on its shortest vector.
\end{proof}

\begin{lemma}[Separate matrix convexity]\label{lem:matrix-convexity}
Let $\xi=(\ell_1,\ell_2,x_1,x_2,x_3)$. Both $Y(\xi)$ and
$Y(\xi)^{-1}$ are convex in the Loewner order in each individual
coordinate, with the other four fixed. Consequently, for
$S,T\succeq0$ and a coordinate rectangle $\mathcal B$,
\begin{equation}\label{eq:corner-maximum}
 \tr\{SY(\xi)+TY(\xi)^{-1}\}
 \le\max_{\zeta\in\mathcal C(\mathcal B)}
       \tr\{SY(\zeta)+TY(\zeta)^{-1}\}
 \qquad(\xi\in\mathcal B).
\end{equation}
\end{lemma}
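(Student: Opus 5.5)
The plan is to write $Y(\xi)$ and $Y(\xi)^{-1}$ explicitly as sums of rank-one positive semidefinite terms whose dependence on any single coordinate is either affine in the factor vectors or a positive exponential in the weight. Loewner convexity then follows by computing a second derivative in that coordinate and observing that it is a sum of positive semidefinite matrices.

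\emph{Shear coordinates.} Fix $\ell_1,\ell_2$ and all shears but one, say $x_i$. Since $N$ is affine in $x_i$, write $N=N_0+x_iE$ with $E$ the corresponding elementary matrix. Then
\[
 Y=N_0^TDN_0+x_i\bigl(E^TDN_0+N_0^TDE\bigr)+x_i^2E^TDE .
\]
This is a matrix quadratic in $x_i$ with second derivative $2E^TDE\succeq0$, since $D$ is positive diagonal. Hence $Y$ is Loewner convex in $x_i$. For the inverse, write $Y^{-1}=N^{-1}D^{-1}N^{-T}$ and compute
\[
 N^{-1}=\begin{pmatrix}1&-x_1&x_1x_3-x_2\\0&1&-x_3\\0&0&1\end{pmatrix}.
\]
Its entries are affine in each single shear, because the only nonlinear entry $x_1x_3$ is bilinear. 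With $M=N^{-T}$ we have $Y^{-1}=M^TD^{-1}M$, and $M$ is affine in $x_i$, so the same computation gives second derivative $2F^TD^{-1}F\succeq0$.

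\emph{Logarithmic coordinates.} Fix the shears and write $Y=\sum_k d_k\,n_kn_k^T$, where $n_k^T$ is the $k$-th row of $N$, and $Y^{-1}=\sum_k d_k^{-1}m_km_k^T$, where $m_k^T$ is the $k$-th row of $M$. As functions of one $\ell_j$ with the other fixed, each $d_k$ has the form $C_ke^{\alpha_k\ell_j}$ with $C_k>0$ and $\alpha_k\in\{\pm\tfrac13,\pm\tfrac23\}$, and likewise each $d_k^{-1}$. Therefore
\[
 \partial_{\ell_j}^2Y=\sum_k\alpha_k^2d_k\,n_kn_k^T\succeq0,
\]
and the same holds for $Y^{-1}$.

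\emph{Corner bound.} For $S,T\succeq0$ and $A\succeq0$ we have $\tr(SA)\ge0$. Hence $f(\xi)=\tr\{SY(\xi)+TY(\xi)^{-1}\}$ is convex in each coordinate separately, being smooth with nonnegative second partial derivative in each variable. A convex function on an interval is bounded by the maximum of its two endpoint values. Apply this successively in $\ell_1$, $\ell_2$, $x_1$, $x_2$, $x_3$, each time replacing one coordinate of $\xi$ by an endpoint of the corresponding interval of $\mathcal B$ without decreasing $f$. After five steps we reach a corner $\zeta\in\mathcal C(\mathcal B)$ with $f(\xi)\le f(\zeta)$, which proves \eqref{eq:corner-maximum}.

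The only point needing care is the affine dependence of $N^{-1}$ on each single shear; the explicit inverse above settles it. The rest is routine.
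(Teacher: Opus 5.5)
Your proposal is correct and follows essentially the same route as the paper. Both write $Y$ and $Y^{-1}$ as positive combinations of outer products of rows of $N$ (respectively columns of $N^{-1}$), which are affine in each single shear, with weights $d_i^{\pm1}$ exponential-affine in $(\ell_1,\ell_2)$; both then take traces against $S,T\succeq0$ and apply successive one-dimensional convex interpolation to reach a corner.
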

\begin{proof}
The matrix $Y=N^TDN$ is a sum of positive $d_i$ times the outer
products of the rows of $N$. Each row is affine in an individual shear
variable, so its outer product has a positive semidefinite second
derivative in that variable. Each $d_i$ is the exponential of an affine
function of $(\ell_1,\ell_2)$, with the outer product fixed.
The same argument applies to $Y^{-1}=N^{-1}D^{-1}N^{-T}$, using
\[
 N^{-1}=\begin{pmatrix}
 1&-x_1&x_1x_3-x_2\\0&1&-x_3\\0&0&1
 \end{pmatrix}.
\]
Each column of $N^{-1}$ is affine in each shear separately.
Taking the trace against positive semidefinite matrices preserves
convexity. Successive one-dimensional convex interpolation then proves
\eqref{eq:corner-maximum}. The same interpolation proves the
corresponding matrix inequalities in the Loewner order.
\end{proof}

\section{The local comparison near FCC and BCC}
\label{sec:contacts}\label{sec:certified-local}

We prove local comparisons on neighborhoods whose radii are independent
of the screening parameter. On bounded screening intervals we estimate
the quadratic and cubic variations and bound the fourth-order remainder
uniformly. For large screening, the twelve FCC contact vectors give
the required estimate directly. This is the local part of the method
developed in our previous work~\cite{LW}.

Throughout this section, $c=\kappa^2/(4\pi)$ and $F_0=W$.
For $\kappa>0$, replacing $F_\kappa$ by $E_\kappa$ leaves every
energy difference and shape derivative unchanged.

\Needspace{15\baselineskip}
\begin{theorem}\label{thm:profile-local}\label{prop:profile-high-local}
Let $X=X^T$, $\tr X=0$, and $\|X\|_{\fro}\le1/40$. Then
\begin{equation}\label{eq:profile-B-local}
 F_\kappa(e^{X/2}\B)-F_\kappa(\B)
 \ge\frac1{1000}\|X\|_{\fro}^2
 \qquad(0\le c\le1/4),
\end{equation}
and, for every $\kappa\ge0$,
\begin{equation}\label{eq:profile-F-local}
 F_\kappa(e^{X/2}\F)-F_\kappa(\F)
 \ge\|X\|_{\fro}^2
 \begin{cases}
 10^{-6},&c\le10,\\
 (\kappa a+1)e^{-\kappa a}/(200a),&c>10.
 \end{cases}
\end{equation}
Moreover, if $A=A^T>0$, $\det A=1$, and
$\|A-\Id\|_{\op}\le3/100$, then
\begin{equation}\label{eq:profile-high-local}
 E_\kappa(A^{1/2}\F)-E_\kappa(\F)
 \ge\frac{s+1}{160a}e^{-s}\|A-\Id\|_{\fro}^2,
 \qquad s=\kappa a\ge12.
\end{equation}
Equality of the two energies in any of these comparisons forces the
deformation to be the identity.
\end{theorem}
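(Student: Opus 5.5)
We argue by a Taylor expansion in the symmetric traceless variable $X$, using the positive Ewald form. For $L_X=e^{X/2}L$ the Gram matrix is $Y_X=g^Te^Xg$ and the dual Gram matrix is $g^{-1}e^{-X}g^{-T}$. By \eqref{eq:P-difference} it suffices to study
\[
 \phi(X)=P(e^{X/2}L,c)=\frac12\sum_{v\in L\setminus\{0\}}I(v^Te^Xv,c)+\frac12\sum_{w\in L^*\setminus\{0\}}J(w^Te^{-X}w,c).
\]
For $L\in\{\B,\F\}$ the full cubic point group fixes $L$. Hence the first variation vanishes: the only invariant linear form on traceless symmetric matrices is a multiple of $\tr X=0$. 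The Hessian is diagonal in the decomposition of traceless symmetric matrices into the two irreducible cubic representations, namely the diagonal traceless part of dimension $2$ and the off-diagonal part of dimension $3$. So
\[
 D^2\phi[X,X]=\alpha(c)\|X_{\mathrm{diag}}\|_\fro^2+\beta(c)\|X_{\mathrm{off}}\|_\fro^2 .
\]
The scalar coefficients $\alpha,\beta$ are explicit lattice sums of $\partial_qI$, $\partial_q^2I$, $\partial_qJ$ and $\partial_q^2J$ against $v_i^2v_j^2$-type moments. The invariant cubic form is likewise a combination of at most two invariants, $\tr X^3$ and $\sum X_{ii}^3$, with scalar coefficients.

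The key steps run in order. First, compute $\alpha,\beta$ and the cubic coefficients by interval arithmetic on $c\in[0,1/4]$ for $\B$ and on $c\in[0,10]$ for $\F$. To do this we subdivide in $c$ and truncate the sums with a Gaussian tail bound. Second, bound the fourth-order remainder uniformly. Complete monotonicity gives $|\partial_q^kI|\le C_k\,q^{?}e^{-\pi q}$-type bounds. For $\|X\|_\fro\le1/40$ we have $e^{\pm X}$ within $e^{1/40}$ of the identity, so $v^Te^Xv\ge e^{-1/40}|v|^2$ and the quartic remainder is at most $C\|X\|_\fro^4$ with explicit $C$. Third, combine these:
\[
 \phi(X)-\phi(0)\ge\tfrac12\min(\alpha,\beta)\|X\|^2-|{\rm cubic}|\,\|X\|^3-C\|X\|^4,
\]
using $|\tr X^3|\le\|X\|_\fro^3/\sqrt6$ for traceless $X$. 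At radius $1/40$ this yields the constants $10^{-3}$ and $10^{-6}$. Strict positivity for $X\neq0$ gives the equality statements.

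For large screening, $c>10$ for \eqref{eq:profile-F-local} and $s\ge12$ for \eqref{eq:profile-high-local}, we abandon the Ewald form and work directly with $E_\kappa$. There the twelve FCC contact vectors $v$, with $|v|=a$, dominate. Write $f(r)=e^{-\kappa r}/r$ and $|A^{1/2}v|^2=a^2(1+u_v)$ with $u_v=v^T(A-\Id)v/a^2$. Then $\sum_v u_v=4\tr(A-\Id)/... $, and by the determinant constraint this is $O(\|A-\Id\|^2)$. Since the contact vectors form a spherical 2-design, $\sum u_v^2$ is proportional to $\|A-\Id\|_\fro^2$. Let $h(u)=f(a\sqrt{1+u})$. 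Convexity of $h$ gives
\[
 \sum_v h(u_v)-12h(0)\ge h'(0)\sum u_v+\tfrac12\min h''\sum u_v^2 .
\]
Here $h'(0)<0$ but $\sum u_v\le O(\|A-\Id\|^2)$ with small constant, via $\log\det A=0$ and $\tr(A-\Id)\ge0$. Indeed $\tr(A-\Id)\le\tfrac12\|A-\Id\|^2(1+o(1))$, so the linear term costs only $|h'(0)|\cdot O(\|A-\Id\|^2)\sim\kappa e^{-s}$. Meanwhile $h''\sim s^2e^{-s}/a$ dominates for $s\ge12$. The remaining shells sit at $|v|\ge\sqrt2a$, and their contribution, including its first-order term, is $O(e^{-\sqrt2 s})$ relative to the first shell. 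That is negligible against $(s+1)e^{-s}$ once $s\ge12$. The regime $10<c$ with $s$ possibly below $12$ (note $c>10$ gives $s\approx12.6$) is covered the same way, after translating $X$ to $A=e^X$.

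The main obstacle is the uniformity in $c$ near the endpoints of the bounded intervals together with sharp control of the cubic term. The FCC Hessian coefficient that becomes small as $c\to0$ (FCC is not the Coulomb minimizer, so near $c=0$ one Hessian eigenvalue is only modestly positive) forces the tiny constant $10^{-6}$. I would first verify numerically that $\min(\alpha,\beta)$ stays bounded away from zero on each $c$-interval. Then I would fix the radius $1/40$ so that the certified cubic and quartic bounds are absorbed.
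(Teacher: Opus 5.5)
Your plan for the bounded screening ranges follows the paper's route: vanishing first variation by cubic symmetry, a Hessian that is scalar on the diagonal traceless and off-diagonal subspaces, certified cubic coefficients, a fourth-derivative bound uniform on $\|X\|_{\fro}\le1/40$, and subdivision in $c$. So does your reduction of the case $c>10$ of \eqref{eq:profile-F-local} to \eqref{eq:profile-high-local} through $A=e^X$.

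The genuine gap is in \eqref{eq:profile-high-local}, which is asserted for \emph{every} $s\ge12$. Your bound $\sum_vh(u_v)-12h(0)\ge h'(0)\sum_vu_v+\frac12(\min h'')\sum_vu_v^2$ cannot be uniform in $s$. Since $h(u)=h(0)(1+u)^{-1/2}e^{-s(\sqrt{1+u}-1)}$, you have $h'(0)=-\frac{s+1}2h(0)$ and $h''(0)=\frac{s^2+3s+3}4h(0)$. But $h''$ is decreasing, so with $d=3/100$ you get $\min_{|u|\le d}h''=h''(d)\approx e^{-3s/200}h''(0)$. Use $\sum_vu_v=12\eta\ge2\|T\|_{\fro}^2/(1+d)$, and note $\sum_vu_v^2\approx\|T\|_{\fro}^2$ for diagonal $T$. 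Your inequality then reads roughly $h(0)\|T\|_{\fro}^2\bigl[\frac{s^2}8e^{-3s/200}-(s+1)\bigr]$. This is already negative at $s=250$, before any higher-shell loss is counted. Taking the minimum only over the actual range of the $u_v$ merely postpones the failure. For fixed $T\ne0$ one has $\max_vu_v>0$, since $\sum_vu_v=12\eta>0$. The quadratic gain then carries a factor of about $e^{-s\max_vu_v/2}$, while the linear loss grows like $s$. So ``$h''\sim s^2e^{-s}/a$ dominates for $s\ge12$'' holds only on a bounded range of $s$. Once $s\|T\|_{\op}$ is not small, a second-order Taylor bound with a uniform lower bound on $h''$ loses too much.

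The missing idea is the paper's argument for $\beta=(s+1)/2\ge64$, which never expands the exponential to second order.
\begin{itemize}
\item Log-convexity gives $h(u)\ge h(0)e^{-\beta u}$.
\item The twelve contact values are $\eta-S_{kk}/2\pm S_{ij}$, each taken twice. So the first shell is at least $12h(0)e^{-\beta\eta}M$, where $M$ is the mean of $e^{x_j}$ over six exponents with mean zero and variance $\sigma^2\ge\beta^2\|S\|_{\fro}^2/12$.
\item Lemma~\ref{lem:moment} gives $\log M\ge\frac1{13}\min\{\sigma^2,\sigma\}$, and this holds at every scale. Combined with $\eta\le\frac9{50}\|S\|_{\fro}^2$ and $\|S\|_{\fro}<\frac{13}{250}$, it yields $\log M\ge2\beta\eta$.
\item Hence the first-shell gain is at least $12\beta\eta h(0)$, against a higher-shell loss of at most $3\beta\eta h(0)$. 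Finally, $\eta\ge\|T\|_{\fro}^2/(6(1+d))$ converts this into the Frobenius bound.
\end{itemize}
Your quadratic argument corresponds only to the paper's other regime, $13/2\le\beta\le64$.

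Two smaller errors also need fixing.
\begin{itemize}
\item The cubic invariants of traceless symmetric matrices under the cubic group form a three-dimensional space, spanned by $I_1,I_2,I_3$. Since $\tr H^3=I_1-3I_2+6I_3$, the pair $\tr H^3,\sum_iH_{ii}^3$ misses a direction. Two test matrices together with $|\tr X^3|\le\|X\|_{\fro}^3/\sqrt6$ therefore do not enclose the third variation. The paper uses $H_D,H_T,H_O$ and \eqref{eq:profile-cubicnorm}.
\item The FCC contact set is a $3$-design but not a $4$-design, so $\sum_vu_v^2$ is not proportional to $\|T\|_{\fro}^2$. What holds, and suffices, is $\sum_w(w^TTw)^2=\sum_iT_{ii}^2+(\tr T)^2+4\sum_{i<j}T_{ij}^2\ge\|T\|_{\fro}^2$.
\end{itemize}
Your diagnosis of the small constant $10^{-6}$ is also off. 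It is forced near $c=10$, where every energy difference carries the factor $e^{-\kappa a}\approx3\cdot10^{-6}$, not at the Coulomb end.
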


We first give the elementary estimates needed for
\eqref{eq:profile-high-local}.

\begin{lemma}\label{lem:det}
Suppose $A=\Id+T=A^T>0$, $\det A=1$, and
$\|T\|_{\op}\le d<2/3$. Write $T=\eta\Id+S$, where $\tr S=0$.
Then
\begin{equation}\label{eq:det}
 \frac{\|T\|_{\fro}^2}{6(1+d)}\le\eta
 \le\frac{\|T\|_{\fro}^2}{6(1-d)},\qquad
 \eta\le\frac{\|S\|_{\fro}^2}{6-9d},\qquad
 \|S\|_{\fro}\le\sqrt3d.
\end{equation}
\end{lemma}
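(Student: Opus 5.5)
Diagonalize $T$ (it is symmetric) with eigenvalues $\tau_1,\tau_2,\tau_3$, so that $|\tau_i|\le d$, $\eta=\frac13\sum\tau_i$, $\|T\|_{\fro}^2=\sum\tau_i^2$, and $S$ has eigenvalues $\sigma_i=\tau_i-\eta$ with $\sum\sigma_i=0$ and $\|S\|_{\fro}^2=\sum\sigma_i^2$. The constraint $\det A=1$ becomes $\sum_i\log(1+\tau_i)=0$. The whole lemma then reduces to scalar estimates on this one relation.

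For the two-sided bound on $\eta$, write $\log(1+\tau)=\tau-\tau^2\,r(\tau)$ with $r(\tau)=(\tau-\log(1+\tau))/\tau^2$. The function $r$ is decreasing on $(-1,\infty)$ and satisfies $r(0)=1/2$. I would then use the elementary bounds
\[
 \frac1{2(1+d)}\le r(\tau)\le\frac1{2(1-d)}\qquad(|\tau|\le d).
\]
These follow from $\tau-\log(1+\tau)=\int_0^\tau \frac{s}{1+s}\,ds$, since $1+s$ lies between $1-d$ and $1+d$ on the segment. Summing gives $3\eta=\sum\tau_i=\sum\tau_i^2 r(\tau_i)$, which yields the first pair of inequalities and also shows $\eta\ge0$.

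For the bound in terms of $S$, expand $\|T\|_{\fro}^2=\|S\|_{\fro}^2+3\eta^2$, using the fact that the cross term vanishes because $\tr S=0$. Insert this into the upper bound to get $6(1-d)\eta\le\|S\|_{\fro}^2+3\eta^2$. Since $\eta\le d$ (an average of the $\tau_i$), we have $3\eta^2\le3d\eta$, so $(6-9d)\eta\le\|S\|_{\fro}^2$. This is where $d<2/3$ is needed to keep $6-9d>0$.

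For the last bound, note that the $\sigma_i$ sum to zero and satisfy $\sigma_i=\tau_i-\eta$ with $\tau_i\in[-d,d]$ and $\eta\in[0,d]$. The crude estimate $|\sigma_i|\le2d$ is too weak. Instead, use $\sum\sigma_i^2=\sum\tau_i^2-3\eta^2\le\sum\tau_i^2\le3d^2$, which gives $\|S\|_{\fro}\le\sqrt3\,d$ directly.

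The only step needing care is the bound on $r$, in particular its sign behaviour for negative $\tau$. The integral representation of $\tau-\log(1+\tau)$ handles both signs uniformly, so I expect no real obstacle.
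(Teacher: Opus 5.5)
Your proposal is correct and follows the paper's proof essentially step for step. The paper also sums the scalar bounds $\frac{t^2}{2(1+d)}\le t-\log(1+t)\le\frac{t^2}{2(1-d)}$ over the eigenvalues using $\log\det A=0$ and deduces $0\le\eta\le d$. It then inserts $\|T\|_{\fro}^2=\|S\|_{\fro}^2+3\eta^2$ into the upper bound to get $(6-9d)\eta\le\|S\|_{\fro}^2$, and finishes with $\|S\|_{\fro}\le\|T\|_{\fro}\le\sqrt3d$. Your integral representation $\tau-\log(1+\tau)=\int_0^\tau\frac{s}{1+s}\,ds$ supplies the justification of the scalar bound, which the paper leaves implicit.
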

\begin{proof}
For $|t|\le d$,
\[
 \frac{t^2}{2(1+d)}\le t-\log(1+t)
 \le\frac{t^2}{2(1-d)}.
\]
Sum over the eigenvalues of $T$, using $\log\det A=0$, to get
\begin{equation}\label{eq:trace}
 \frac{\|T\|_{\fro}^2}{2(1+d)}\le\tr T
 \le\frac{\|T\|_{\fro}^2}{2(1-d)}.
\end{equation}
Thus $0\le\eta\le d$. Since $\tr T=3\eta$ and
$\|T\|_{\fro}^2=\|S\|_{\fro}^2+3\eta^2$, the upper bound gives
\[
 6(1-d)\eta\le\|S\|_{\fro}^2+3\eta^2
 \le\|S\|_{\fro}^2+3d\eta.
\]
Rearranging yields $(6-9d)\eta\le\|S\|_{\fro}^2$.
Finally, $\|S\|_{\fro}\le\|T\|_{\fro}\le\sqrt3d$.
\end{proof}

\begin{lemma}\label{lem:moment}
If $x_1,\ldots,x_6\in\R$, $\sum_jx_j=0$, and
$\sigma^2=\frac16\sum_jx_j^2$, then
\begin{equation}\label{eq:moment}
 \log\left(\frac16\sum_{j=1}^6e^{x_j}\right)
 \ge\frac1{13}\min\{\sigma^2,\sigma\}.
\end{equation}
\end{lemma}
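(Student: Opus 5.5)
The plan is to reduce the inequality to a one-variable problem in $\sigma$. The only scale in the hypothesis is $\sigma$, and the left side is $\log$ of a mean of exponentials of centred data. Write $M=\frac16\sum_j e^{x_j}$. By convexity, $M\ge e^{\bar x}=1$, so the left side is nonnegative; the task is to quantify the excess over $1$. I will use the pointwise bound $e^x\ge 1+x+\phi(x)$ with a convenient convex minorant $\phi\ge0$ of $e^x-1-x$. Averaging and using $\sum x_j=0$ gives $M\ge 1+\frac16\sum_j\phi(x_j)$.

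There are two regimes. For small $\sigma$ (say $\sigma\le1$), the relevant quantity is quadratic. On $x\ge -R$ we have $e^x-1-x\ge \alpha x^2$ with $\alpha=(e^{-R}-1+R)/R^2$, the worst case at the left endpoint, since $(e^x-1-x)/x^2$ is increasing. A lower bound $x_j\ge -R$ is automatic: $\sum x_j^2=6\sigma^2$ forces $|x_j|\le\sqrt6\,\sigma$. A sharper bound comes from $\sum x_j=0$, which gives $|x_j|\le\sqrt5\,\sigma$, because the other five values sum to $-x_j$ and so $x_j^2+x_j^2/5\le6\sigma^2$. With $\sigma\le1$ and $R=\sqrt5$ one gets $\alpha\approx(0.107-1+2.236)/5\approx0.269$. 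Hence $M\ge1+0.269\sigma^2$ and $\log M\ge\log(1+0.269\sigma^2)\ge 0.269\sigma^2/(1+0.269)\ge0.21\sigma^2$, which is comfortably above $\sigma^2/13$.

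For $\sigma\ge1$ we need $\log M\ge\sigma/13$, and the danger is that quadratic lower bounds on $e^x$ are useless for very negative $x$. Instead, $\sum x_j=0$ and $\sum x_j^2=6\sigma^2$ force some large positive entry. Write $S_+=\sum_j x_j^+=\sum_j x_j^-$. Since each $|x_j|\le\sqrt5\,\sigma$, we get $6\sigma^2=\sum x_j^2\le\sqrt5\,\sigma\sum|x_j|=2\sqrt5\,\sigma S_+$, so $S_+\ge 3\sigma/\sqrt5$. Therefore $\max_j x_j\ge S_+/5\ge 3\sigma/(5\sqrt5)\approx0.268\sigma$, or better $S_+/k$ where $k$ counts the positive entries. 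Then $M\ge\frac16 e^{\max x_j}$, so $\log M\ge0.268\sigma-\log6$. This beats $\sigma/13\approx0.077\sigma$ once $\sigma\ge\log 6/0.191\approx9.4$.

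The main obstacle is the intermediate range $1\le\sigma\lesssim 9.4$, where neither crude bound suffices by itself. There I would return to the minorant approach. Using $e^x\ge1+x+\alpha(R)x^2$ on $x\ge-R$ with $R=\sqrt5\,\sigma$ gives $M\ge1+\alpha(\sqrt5\sigma)\sigma^2$ with $\alpha(R)\ge(R-1)/R^2$, so $M-1\ge(\sqrt5\sigma-1)/5$. This grows linearly in $\sigma$, and we need $\log(1+(\sqrt5\sigma-1)/5)\ge\sigma/13$ on $[1,9.4]$. At $\sigma=1$ the left side is $\log(1.247)\approx0.221$ against $0.077$. At $\sigma=9.4$ it is $\log(4.0)\approx1.39$ against $0.72$. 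The left side is concave in $\sigma$ and the right side is linear, so checking the two endpoints suffices on the whole interval. Consequently the large-$\sigma$ argument is not even needed beyond this range, because the same concavity check can be extended to all $\sigma\ge1$ provided the logarithmic growth keeps pace. Since it does not keep pace indefinitely, I would split at a point such as $\sigma=9.4$ and use the max-entry bound beyond it, verifying both endpoint inequalities numerically.
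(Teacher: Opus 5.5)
Your argument is correct. It shares the paper's two-regime skeleton, and your large-$\sigma$ step matches the paper's in spirit (bound $\log M$ below by $\max_jx_j-\log6$). The moderate range, however, is handled by a different mechanism.

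The paper uses a one-sided device. Since at most five entries are positive and the entries sum to zero,
$\sum_{x_j<0}x_j^2\le\bigl(\sum_{x_j>0}x_j\bigr)^2\le5\sum_{x_j>0}x_j^2$, so $\sum_{x_j>0}x_j^2\ge\frac16\sum_jx_j^2=\sigma^2$. Applying $e^x\ge1+x$ to negative entries and $e^x\ge1+x+x^2/2$ to positive ones gives $M\ge1+\sigma^2/12$ for every $\sigma$. Hence $\log M\ge\sigma^2/(12+\sigma^2)$, which settles all of $0\le\sigma\le8$ (indeed up to $12$) at once, without any bound on how negative an entry can be.

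Your two-sided minorant $e^x-1-x\ge\alpha(\sqrt5\sigma)x^2$ on $[-\sqrt5\sigma,\infty)$ gives a better constant for small $\sigma$ (about $0.268$ instead of $1/12$). But its curvature decays like $1/\sigma$, which is why you need the extra linear bound $M\ge(4+\sqrt5\sigma)/5$ and the concavity check.

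For large $\sigma$, the paper gets $\max_jx_j\ge\sigma/\sqrt5$ by averaging $(m-x_j)(x_j+5m)\ge0$. This is sharper than your $3\sigma/(5\sqrt5)$, obtained from $\sum_jx_j^2\le\max_j|x_j|\sum_j|x_j|$ and pigeonhole, and it is why your crossover moves out to $\sigma\approx9.36$.

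Minor points:
\begin{itemize}
\item At $\sigma=9.4$ your middle left side is $\log\bigl((4+9.4\sqrt5)/5\bigr)\approx\log5.0\approx1.61$, not $\log4$. This is harmless, since you understated it.
\item $\alpha(\sqrt5)\approx0.2686$ should be rounded down, not up.
\item The margin of the max-entry bound at $\sigma=9.4$ is thin: $9.4\bigl(3/(5\sqrt5)-1/13\bigr)\approx1.799$ against $\log6\approx1.792$.
\end{itemize}
Since $\log\bigl((4+\sqrt5\sigma)/5\bigr)\ge\sigma/13$ actually persists up to about $\sigma\approx37$, splitting at, say, $\sigma=20$ gives comfortable margins on both sides.
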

\begin{proof}
There is nothing to prove when $\sigma=0$. Since the entries sum to
zero and at most five are positive,
\[
 \sum_{x_j<0}x_j^2\le\left(\sum_{x_j>0}x_j\right)^2
 \le5\sum_{x_j>0}x_j^2,
\]
and hence $\sum_{x_j>0}x_j^2\ge\sigma^2$.
Apply $e^x\ge1+x$ to negative entries and
$e^x\ge1+x+x^2/2$ to positive entries. This yields
$\frac16\sum e^{x_j}\ge1+\sigma^2/12$, so its logarithm is at least
$\sigma^2/(12+\sigma^2)$. This proves the assertion for
$0\le\sigma\le8$.

If $\sigma\ge8$, put $m=\max_jx_j$. Every entry lies in $[-5m,m]$.
Averaging $(m-x_j)(x_j+5m)\ge0$ gives $\sigma^2\le5m^2$; therefore
\[
 \log\left(\frac16\sum e^{x_j}\right)
 \ge\frac\sigma{\sqrt5}-\log6
 >\frac{4\sigma}{9}-2\ge\frac\sigma{13}.
\]
\end{proof}

\begin{proposition}\label{prop:tail}
Let $N_n$ denote the number of FCC vectors of squared length $na^2$.
Then $N_1=12$, $N_2=6$, $N_n<30n$ for every $n\ge1$, and
\begin{equation}\label{eq:tail}
 \sum_{n\ge2}N_ne^{-s(\sqrt n-1)}<3,
 \qquad E_\kappa(\F)<\frac{15}{2a}e^{-s}
 \qquad(s=\kappa a\ge10).
\end{equation}
\end{proposition}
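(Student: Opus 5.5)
We identify $\F=2^{-1/3}D_3$ and count vectors of $D_3$, where $|m|^2=2^{1/3}\cdot|{\rm vector\ in\ }D_3|^2/2^{2/3}$. Concretely, a vector $2^{-1/3}m$ with $m\in D_3$ has squared length $2^{-2/3}|m|^2$. Since $a^2=2^{1/3}=2^{-2/3}\cdot2$, the shell $na^2$ corresponds to $|m|^2=2n$. So $N_n=r_3(2n)$, the number of representations of $2n$ as a sum of three squares; the parity condition in $D_3$ is automatic because $m_1+m_2+m_3\equiv|m|^2\pmod 2$. Then $N_1=r_3(2)=12$ (permutations and signs of $(1,1,0)$) and $N_2=r_3(4)=6$ (the vectors $(\pm2,0,0)$ and permutations).

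For the bound $N_n<30n$, I would use an elementary count rather than class numbers. Fix $m_3$ with $|m_3|\le\sqrt{2n}$. The number of $(m_1,m_2)$ with $m_1^2+m_2^2=k$ is $r_2(k)=4\sum_{d\mid k}\chi_{-4}(d)$. This crude route is risky, so instead I would count lattice points on the sphere by fixing $(m_2,m_3)$: $m_1$ then has at most two choices. That gives $N_n\le2\cdot\#\{(m_2,m_3):m_2^2+m_3^2\le2n\}$, and this disc count is at most $\pi\cdot2n+O(\sqrt n)$, too big by a constant. Better: fix $m_3$ (at most $2\sqrt{2n}+1$ values). For each, the pairs $(m_1,m_2)$ on a circle of squared radius $k\le2n$ number at most $r_2(k)\le4d(k)$. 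That is still not linear uniformly.

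The cleanest route is the known bound $r_3(N)\le C\sqrt N\log N$, but $30n$ is a linear bound, which is much weaker. So I would use the following argument. Points on the sphere $|m|^2=2n$ with $|m_3|=j$ lie on a circle; the number of integer points on a circle of radius $\rho$ is at most $\max(4,\ 2\pi\rho+4)$ trivially? No: by the arc argument, consecutive integer points on a circle are at distance $\ge1$, so there are at most $2\pi\rho$ of them, and at most 4 when $\rho<1$. Summing over $j$ with $\rho_j=\sqrt{2n-j^2}$ gives
\[
N_n\le\sum_{|j|\le\sqrt{2n}}\max\bigl(4,2\pi\sqrt{2n-j^2}\bigr)\le 4(2\sqrt{2n}+1)+\pi^2\cdot2n,
\]
using $\sum_j2\pi\rho_j\approx2\pi\cdot\frac{\pi}{2}\cdot2n$. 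This is roughly $19.8n+11.3\sqrt n+4<30n$ for $n\ge2$, and $n=1$ is direct. The only delicate step is bounding the Riemann sum $\sum_j\sqrt{2n-j^2}$ by the half-disc area $\pi n$ plus a small error, which works since the function is concave. Any slack can be absorbed, and small $n$ can be checked by enumeration.

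For the tail sum, I would split the series at $n=2$: the first term is $6e^{-s(\sqrt2-1)}\le6e^{-4.14}<0.1$. For $n\ge3$, I use $\sqrt n-1\ge(n-1)/(1+\sqrt n)$, or more simply that $\sqrt n-1$ is at least $0.73$ and increases. Bounding $30n\,e^{-10(\sqrt n-1)}$ and summing gives a total well below $1$, for instance by comparison with $\int 30x e^{-10(\sqrt x-1)}dx$ or by explicitly summing the first few terms plus a geometric tail.

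Finally, for the energy bound,
\[
E_\kappa(\F)=\frac12\sum_nN_n\frac{e^{-s\sqrt n}}{a\sqrt n}\le\frac{e^{-s}}{2a}\Bigl(12+\sum_{n\ge2}N_ne^{-s(\sqrt n-1)}\Bigr)<\frac{15}{2a}e^{-s}.
\]
The main obstacle is making the $30n$ count rigorous with clean constants.
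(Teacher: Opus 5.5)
Your identification $N_n=r_3(2n)$ (with the parity condition automatic), the tail estimate and the shell summation giving $E_\kappa(\F)<\frac{15}{2a}e^{-s}$ are correct and match the paper. Splitting off the $n=2$ term with $N_2=6$ only gives you a smaller constant than the paper's $90e^{-4}$.

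The difference is the count $N_n<30n$, and your detour there comes from a miscalculation. The route you discarded is exactly the paper's proof. Fixing $(m_1,m_2)$ leaves at most two values of $m_3$, and the pairs lie in the square $|m_1|,|m_2|\le\lfloor\sqrt{2n}\rfloor$. Hence $N_n\le2(2\lfloor\sqrt{2n}\rfloor+1)^2\le16n+8\sqrt{2n}+2<30n$ for every $n\ge1$; the middle expression is about $29.3$ at $n=1$. Even your disc version gives $2(2\pi n+O(\sqrt n))\approx12.6\,n$, so it was not too big by a constant. No circle counting, Riemann sums or small-case enumeration is needed.

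Your replacement argument via arc length on the circles $|m_3|=j$ can be made to work, but the step bounding the sum is misjustified. Concavity pushes the wrong way: for concave $g$ one has $g(j)\ge\int_{j-1/2}^{j+1/2}g$, so the sum tends to exceed the integral. Indeed at $n=1$, $\sum_{|j|\le1}\sqrt{2-j^2}=2+\sqrt2>\pi=\pi n$. So concavity cannot be what bounds $\sum_j2\pi\rho_j$ by $2\pi^2n$. Your stated constant $19.8n+11.3\sqrt n+4$ simply omits the Riemann-sum error.

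The correct tool is monotonicity of $\sqrt{2n-x^2}$ on $[0,\sqrt{2n}]$. It gives $\sum_{|j|\le\sqrt{2n}}\sqrt{2n-j^2}\le\pi n+\sqrt{2n}$. Also, $\rho_j^2=2n-j^2$ is an integer. So either $\rho_j\ge1$, and the circle carries at most $2\pi\rho_j$ points, or $\rho_j=0$, giving one point. The $\max(4,\cdot)$ is therefore unnecessary, and you get $N_n\le2\pi^2n+2\pi\sqrt{2n}+2<30n$ for $n\ge2$, with $n=1$ checked directly.

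With this repair your proof is complete. However, it buys nothing over the paper's one-line square count.
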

\begin{proof}
The $n$th shell is represented by integer triples of squared norm $2n$
and even coordinate sum. The first two counts are immediate. Fixing
the first two coordinates leaves at most two choices for the third, so
\[
 N_n\le2(2\lfloor\sqrt{2n}\rfloor+1)^2
 \le16n+8\sqrt{2n}+2<30n.
\]
For $s\ge10$, the function $x e^{-s(\sqrt x-1)}$ decreases on
$[2,\infty)$. Sum--integral comparison gives
\begin{align*}
 \sum_{n\ge2}N_ne^{-s(\sqrt n-1)}
 &<30e^{-s(\sqrt2-1)}
 \left[2+2\left(\frac{2\sqrt2}s+\frac6{s^2}
                  +\frac{6\sqrt2}{s^3}+\frac6{s^4}\right)\right]\\
 &<90e^{-4}<3.
\end{align*}
Here the bracket is less than
$2+6/10+12/10^2+18/10^3+12/10^4<3$, and
$\sqrt2-1>2/5$. Summing the energy by shells now proves its stated
upper bound.
\end{proof}

For the bounded parameter range, consider the determinant-preserving
deformations $e^{X/2}C$, where $C\in\{\F,\B\}$, $X=X^T$, and
$\tr X=0$.

For $A>0$ and $j\in\mathbb Z$, define
\[
 K_j(A,c)=\int_1^\infty t^{j-1/2}e^{-At-c/t}\,dt,\qquad
 T_j(A,c)=\int_1^\infty t^j e^{-(A+c)t}\,dt,
\]
and write $K_j(A)=K_j(A,0)$.
These kernels satisfy $K_0(\pi q,c)=I(q,c)$ and
$T_0(\pi q,c)=J(q,c)$, so \eqref{eq:positive-P} gives the
derivatives of $X\mapsto F_\kappa(e^{X/2}C)$. Normal convergence, as
established in Section~\ref{sec:representation}, justifies the
following differentiations, also at $c=0$.

For a cubic shell $S$ set
\[
 M(S)=\sum_{v\in S}v_1^2,\qquad
 A_D(S)=\sum_{v\in S}(v_1^4-v_1^2v_2^2),\qquad
 A_O(S)=2\sum_{v\in S}v_1^2v_2^2.
\]
For a traceless symmetric matrix $H$ put
\[
 I_1(H)=\sum_iH_{ii}^3,\qquad
 I_2(H)=H_{11}H_{23}^2+H_{22}H_{13}^2+H_{33}H_{12}^2,\qquad
 I_3(H)=H_{12}H_{13}H_{23}.
\]
\Needspace{18\baselineskip}
\begin{lemma}[Variations at the cubic lattices]\label{lem:local-variation-formulas}
The first variation at $C\in\{\F,\B\}$ vanishes on traceless symmetric
directions. Its Hessian is
scalar on the diagonal traceless and off-diagonal subspaces, with
Frobenius eigenvalues
\begin{align}\label{eq:profile-mu}
 \mu_\tau(c)={}&\frac12\sum_{S\subset C}
 \{\pi^2 A_\tau(S)K_2(\pi q_S,c)-\pi M(S)K_1(\pi q_S,c)\}\notag\\
 &+\frac12\sum_{S\subset C^*}
 \{\pi^2 A_\tau(S)T_2(\pi q_S,c)-\pi M(S)T_1(\pi q_S,c)\},
 \qquad \tau\in\{D,O\},
\end{align}
where $q_S=|v|^2$ on the shell $S$. The third variation satisfies
\begin{equation}\label{eq:profile-cubic}
 \left.\frac{d^3}{dt^3}F_\kappa(e^{tH/2}C)\right|_{t=0}=\alpha(c)I_1(H)+\beta(c)I_2(H)+\gamma(c)I_3(H).
\end{equation}
Here $H=H^T$ and $\tr H=0$.
For $\|H\|_{\fro}=1$,
\begin{equation}\label{eq:profile-cubicnorm}
 |I_1(H)|\le\frac1{\sqrt6}<\frac5{12},\qquad
 |I_2(H)|\le\frac{\sqrt2}{9}<\frac16,\qquad
 |I_3(H)|\le\frac1{6\sqrt6}<\frac5{72}.
\end{equation}
\end{lemma}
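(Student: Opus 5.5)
Since $C$ and $C^*$ have unit covolume, $F_\kappa$ and $P(\cdot,c)$ differ by a constant (Proposition~\ref{prop:ewald}). We therefore differentiate $P(e^{tH/2}C,c)$ termwise using \eqref{eq:positive-P}; normal convergence justifies this. Take $g_t=e^{tH/2}g$ with $g^Tg=Y$. Then the direct vectors transform as $v\mapsto e^{tH/2}v$, so $|v|^2\mapsto v^Te^{tH}v$. The dual lattice is $(e^{tH/2}C)^*=e^{-tH/2}C^*$, so $|w|^2\mapsto w^Te^{-tH}w$.

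For each shell $S$ we set $\phi_S(t)=\sum_{v\in S}f(v^Te^{tH}v)$, with $f(q)=I(q,c)$ or $f(q)=J(q,c)$, taking $-H$ in the dual case. Differentiating under the integral gives
\[
\partial_q^kI(q,c)=(-\pi)^kK_k(\pi q,c),\qquad \partial_q^kJ(q,c)=(-\pi)^kT_k(\pi q,c).
\]
Write $q(t)=v^Te^{tH}v$. Then $q'=v^THv$, $q''=v^TH^2v$ and $q'''=v^TH^3v$, and the chain rule gives
\[
\phi''=\sum f''(q)(q')^2+f'(q)q'',\qquad \phi'''=\sum f'''(q)(q')^3+3f''(q)q'q''+f'(q)q'''.
\]

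The proof then rests on cubic invariance. Each shell $S$ of $\F$, $\B$, or their duals is invariant under the signed permutation group $O_h$, and $q_S$ is constant on $S$. Hence every moment tensor $\sum_{v\in S}v^{\otimes 2k}$ is $O_h$-invariant.

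\emph{First variation.} The second moment is $\sum_S vv^T=M(S)\Id$, so the first variation is $\sum_{v\in S}f'(q_S)v^THv=f'(q_S)M(S)\tr H=0$.

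\emph{Hessian.} The term $\sum_{v\in S}v^TH^2v$ equals $M(S)\|H\|_\fro^2$. For the other term we expand
\[
\sum_{v\in S}(v^THv)^2=\sum_{v\in S}\Bigl(\sum_i H_{ii}v_i^2+2\sum_{i<j}H_{ij}v_iv_j\Bigr)^2 .
\]
Cross terms with an odd power of some coordinate vanish by sign symmetry. Write $\sum v_1^4=:a$ and $\sum v_1^2v_2^2=:b$. The diagonal part is then $a\sum_i H_{ii}^2+b\sum_{i\ne j}H_{ii}H_{jj}=(a-b)\sum_iH_{ii}^2$, using $\tr H=0$. The off-diagonal part is $4b\sum_{i<j}H_{ij}^2=2b\sum_{i\ne j}H_{ij}^2$.

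So on diagonal traceless $H$ (where $\|H\|_\fro^2=\sum H_{ii}^2$) the quadratic form is $A_D(S)\|H\|_\fro^2$. On off-diagonal $H$ (where $\|H\|_\fro^2=\sum_{i\neq j}H_{ij}^2$) it is $A_O(S)\|H\|_\fro^2$, with $A_O=2b$. Mixed diagonal/off-diagonal terms vanish by sign symmetry.

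Inserting $f''=\pi^2K_2$, $f'=-\pi K_1$ (respectively $T_2,T_1$ on dual shells, where $q'\mapsto -q'$ leaves squares unchanged), together with the $\tfrac12$ of \eqref{eq:positive-P}, gives \eqref{eq:profile-mu}. Here $\mu_\tau$ is the Hessian divided by $\|H\|_\fro^2$. I expect to need to fix the normalization convention (second derivative in $t$ along unit-Frobenius $H$) so that the factor matches.

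\emph{Third variation.} The expression $\phi'''$ is an $O_h$-invariant cubic polynomial in traceless symmetric $H$. Traces like $\tr H^3$ are themselves combinations of $I_1,I_2,I_3$ and $\sum H_{ii}\sum(\cdot)$. We show that the space of $O_h$-invariant cubics on symmetric matrices, restricted to $\tr H=0$, is spanned by $I_1,I_2,I_3$. The argument goes monomial by monomial. Sign changes $v_i\mapsto -v_i$ act as $H_{ij}\mapsto \pm H_{ij}$, which forces each surviving monomial to be of the form $H_{ii}H_{jj}H_{kk}$, $H_{ii}H_{jk}^2$, or $H_{12}H_{13}H_{23}$. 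Permutation invariance groups these into $\sum H_{ii}^3$, $\sum_{i\ne j}H_{ii}^2H_{jj}$, $H_{11}H_{22}H_{33}$, $\sum_i H_{ii}\sum_{j<k}H_{jk}^2$, $I_2$, and $I_3$. Using $\tr H=0$, the second, third and fourth of these reduce to multiples of $I_1$ and of $I_2$: for example, $\sum_i H_{ii}\cdot(\cdot)=0$ and $H_{11}H_{22}H_{33}=\tfrac13 I_1$. The coefficients $\alpha,\beta,\gamma$ are then whatever sums of $K_j$, $T_j$ result; they need not be computed explicitly here.

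\emph{Norm bounds.} These are elementary constrained maximizations.
\begin{itemize}
\item For $I_1$: under $\sum h_i=0$ and $\sum h_i^2\le1$, the maximum of $\sum h_i^3$ is $1/\sqrt6$, attained at $(2,-1,-1)/\sqrt6$ by a Lagrange argument.
\item For $I_3$: here $2(H_{12}^2+H_{13}^2+H_{23}^2)\le1$, so AM–GM gives $|I_3|\le(1/6)^{3/2}=1/(6\sqrt6)$.
\item For $I_2$: put $u=\sum H_{ii}^2$ and $w_i=H_{jk}^2$ with $u+2\sum w_i\le1$. Maximizing $\sum_i h_iw_i$ gives at most $\max_i|h_i|\cdot\sum w_i$. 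Traceless diagonals satisfy $\max|h_i|\le\sqrt{2u/3}$, so the bound is $\sqrt{2u/3}\,(1-u)/2$. Optimizing at $u=1/3$ gives $\sqrt2/9$.
\end{itemize}
The numerical comparisons $1/\sqrt6<5/12$, $\sqrt2/9<1/6$ and $1/(6\sqrt6)<5/72$ are immediate.

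The main obstacle is bookkeeping rather than analysis: getting the sign conventions for the dual part right, and pinning down the Frobenius normalization of the eigenvalues.
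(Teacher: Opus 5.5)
Your proposal is correct and follows essentially the same route as the paper. You differentiate the positive Ewald sum termwise, use sign-change and permutation symmetry of the shells to kill the first variation and split the Hessian (with $\sum_S(v^THv)^2=A_\tau(S)\|H\|_{\fro}^2$ and $\sum_S v^TH^2v=M(S)\|H\|_{\fro}^2$), and reduce the cubic by the same monomial classification; your $I_2$ bound is the paper's $d(1-d^2)$ argument, and your $I_1$ and $I_3$ bounds are the traceless-constraint and AM--GM arguments it cites. Your normalization concern is already settled by your own computation. The only addition in the paper is the explicit extraction of $\alpha,\beta,\gamma$ from the test matrices $H_D,H_T,H_O$ via \eqref{eq:profile-thirdmoment}, which the later numerical enclosures need but the statement does not.
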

\begin{proof}
Sign changes and permutations make the first derivative zero and split
the Hessian into the stated eigenspaces. The first two derivatives of
$v^Te^{tH}v$ at zero are $v^THv$ and $v^TH^2v$. For a unit Frobenius
direction in the diagonal traceless or off-diagonal subspace, shell
symmetry gives, respectively,
\[
 \sum_{v\in S}(v^THv)^2=A_\tau(S),\qquad
 \sum_{v\in S}v^TH^2v=M(S)\qquad(\tau=D,O).
\]
Differentiation under the integral gives
$I_q(q,c)=-\pi K_1(\pi q,c)$ and
$I_{qq}(q,c)=\pi^2K_2(\pi q,c)$. The chain rule now gives the direct
part of \eqref{eq:profile-mu}; the dual deformation $e^{-tH/2}$ gives
the same second-variation formula with $T_j$ in place of $K_j$.
The fourth moments are nonnegative, since
$A_D(S)=\tfrac12\sum_S(v_1^2-v_2^2)^2$ and $A_O(S)\ge0$.

A cubic invariant under coordinate sign changes and permutations is a
linear combination of $I_1,I_2,I_3$. A surviving monomial contains
three diagonal entries, one diagonal entry and a squared off-diagonal
entry, or the three different off-diagonal entries; $\tr H=0$ gives
the stated reduction. For the middle bound in
\eqref{eq:profile-cubicnorm}, put $d^2=\sum H_{ii}^2$. Since
$\max_i|H_{ii}|\le\sqrt{2/3}d$ and
$\sum_{i<j}H_{ij}^2=(1-d^2)/2$,
\[
 |I_2(H)|\le\max_i|H_{ii}|\sum_{i<j}H_{ij}^2
 \le\frac{\sqrt{2/3}}2d(1-d^2)\le\frac{\sqrt2}{9}.
\]
The last step uses $\max_{0\le d\le1}d(1-d^2)=2/(3\sqrt3)$.
The other two bounds follow from the traceless diagonal constraint and
the arithmetic--geometric mean inequality.

For the coefficients in \eqref{eq:profile-cubic}, use the matrices
\[
 H_D=\begin{pmatrix}1&0&0\\0&1&0\\0&0&-2\end{pmatrix},\quad
 H_T=\begin{pmatrix}1&0&0\\0&-1&1\\0&1&0\end{pmatrix},\quad
 H_O=\begin{pmatrix}0&1&1\\1&0&1\\1&1&0\end{pmatrix}.
\]
Their cubic invariants are $(-6,0,0)$, $(0,1,0)$, and $(0,0,1)$.
For a shell represented by integer vectors $m$ with physical vector
$v=\sqrt{s}\,m$, define
\[
 U_3(H)=\sum_m(m^THm)^3,\quad
 U_{12}(H)=\sum_m(m^THm)(m^TH^2m),\quad
 U_1(H)=\sum_m m^TH^3m.
\]
Its direct contribution to the directional third derivative is
\begin{equation}\label{eq:profile-thirdmoment}
 \frac12\{-\pi^3s^3U_3(H)K_3
       +3\pi^2s^2U_{12}(H)K_2-\pi sU_1(H)K_1\}.
\end{equation}
The arguments are $(\pi s|m|^2,c)$. The dual contribution is the
negative of the same expression with $T_j$ in place of $K_j$, because
the dual deformation is $e^{-X/2}$. Sum these expressions and evaluate
on $H_D,H_T,H_O$ to obtain $-6\alpha,\beta,\gamma$.
\end{proof}

\begin{lemma}[Uniform derivative tails]\label{lem:local-derivative-tails}
Let $\delta=1/40$. After retaining FCC shells $n\le9$ and BCC shells
$\ell\le30$, the omitted directional second and third variations at
the origin are less in absolute value than $10^{-10}$ and $10^{-8}$,
respectively, for $\|H\|_{\fro}=1$. The omitted fourth variation
on $\|X\|_{\fro}\le\delta$ is less than $10^{-6}$ under the same
normalization. These bounds hold for both references and every $c\ge0$.
\end{lemma}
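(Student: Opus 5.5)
Since $e^{-c/t}\le1$ and $e^{-ct}\le1$, every kernel is dominated by its unscreened version. I will therefore bound each omitted derivative by the absolute values of the $c=0$ kernels, and the estimates will hold for all $c\ge0$ at once. Differentiating $v^Te^{tH}v$ up to order four, the $k$-th variation of a single summand is a finite combination of products of $\pi^j q^{j}$-weighted moments times $K_j(\pi q,c)$ or $T_j(\pi q,c)$, $j\le k$. This follows the pattern of \eqref{eq:profile-thirdmoment}. For $\|H\|_{\fro}=1$ we have $|v^TH^iv|\le|v|^2$. Hence the $k$-th directional variation of the shell of squared length $q$ is bounded by $C_k N(q)\sum_{j\le k}(\pi q)^jK_j(\pi q,0)$, with explicit combinatorial constants $C_k$, plus the analogous dual term with $T_j$. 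For the fourth variation on $\|X\|_{\fro}\le\delta$, I evaluate at the deformed lattice $e^{X/2}C$. There the squared lengths are distorted by at most a factor $e^{\pm\delta}$, because $\|X\|_{\op}\le\|X\|_{\fro}$. So I replace $q$ by $e^{-\delta}q$ in the kernel and by $e^{\delta}q$ in the polynomial weights.

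Next I bound the kernels explicitly. For $A\ge A_0>j$ one has $K_j(A,0)\le\int_1^\infty t^{j}e^{-At}dt\le e^{-A}\frac{1}{A}\bigl(1+\frac{j}{A}+\dots\bigr)$, a finite incomplete-gamma expression. The same holds for $T_j$. The tail terms therefore behave like $q^{k-1}e^{-\pi e^{-\delta}q}$. Shell counts come from Proposition~\ref{prop:tail}: $N_n<30n$ for FCC. For BCC I use the analogous bound: shells correspond to integer vectors $m$ of $\Z^3$, all even or all odd, with $|m|^2$ fixed, so the count is $O(\ell)$ by the same fixing-two-coordinates argument. The dual lattices are BCC and FCC respectively, so the $T$-sums are controlled by the same shell data.

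The tails start at FCC shell $n=10$, where $\pi q=10\pi a^2\approx 39.6$, and at BCC shell $\ell=31$ in the normalization of $\B$. Both give factors like $e^{-38}$, far below the stated thresholds even after multiplying by polynomial weights of degree four and by the $e^{\pm\delta}$ distortions. I sum the tail by comparing with an integral, as in Proposition~\ref{prop:tail}, using monotonicity of $x^{k}e^{-\pi e^{-\delta}a^2x}$ beyond the cutoff.

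The main obstacle is bookkeeping rather than a conceptual difficulty. It requires getting the constants $C_k$ for the fourth variation right, including the mixed terms $(v^THv)^2(v^TH^2v)$ and similar ones, and fixing the BCC shell normalization so that the cutoff $\ell\le30$ really corresponds to $\pi q$ large enough. I would verify these crude bounds numerically with interval arithmetic, alongside the appendix lemmas.
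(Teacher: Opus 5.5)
Your proposal is correct and follows the paper's proof: you drop the screening factors, and you bound the derivatives of the quadratic form by $e^{\delta}|v|^2$ and the deformed squared lengths below by $e^{-\delta}|v|^2$. You also use the Fa\`a di Bruno coefficients (the paper's $U^4K_4+6U^3K_3+7U^2K_2+UK_1$), $t^{-1/2}\le1$ with explicit incomplete-gamma integrals, and the shell counts $N_n<30n$, $M_\ell<30\ell$, before summing the exponentially small tails (the paper uses exact geometric sums $\sum n^k r^n$ with $r=1/47,1/49,3/10$ instead of an integral comparison). One point you should state explicitly: the $u$-derivatives of $v^Te^{X+uH}v$ are bounded by $e^{\delta}|v|^2$ even when $X$ and $H$ do not commute, which is more than evaluating at the deformed lattice $e^{X/2}C$ gives you; this follows from the iterated-integral formula for derivatives of the matrix exponential, and is immediate along the radial Taylor path where $H\parallel X$.
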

\begin{proof}
Set $\delta=1/40$. If $\|X\|_{\fro}\le\delta$ and
$\|H\|_{\fro}=1$, then all derivatives of
$v^Te^{X+uH}v$ of order $j\le4$ at $u=0$ have absolute value at most
$e^\delta|v|^2$. This follows either by the iterated integral formula
for derivatives of the matrix exponential or, along the radial Taylor
path, by diagonalizing $H$. Also
$v^Te^Xv\ge e^{-\delta}|v|^2$.
Put $A=\pi e^{-\delta}|v|^2$ and $U=\pi e^\delta|v|^2$.
The fourth derivative of a summand is therefore bounded, after
integration, by
\begin{equation}\label{eq:profile-Bell4}
 U^4K_4(A,c)+6U^3K_3(A,c)+7U^2K_2(A,c)+UK_1(A,c),
\end{equation}
and the same formula with $T_j$ for a dual summand. The coefficients
$1,6,7,1$ come from the fourth derivative of an exponential of a
function. The factor $1/2$ from \eqref{eq:positive-P} is retained when
summing this estimate.

We spell out all omitted-shell estimates. FCC shells have
$|v|^2=2^{1/3}n$, with $N_n<30n$. BCC can be written
\[
 \B=\left\{2^{-2/3}m:m\in\Z^3,
                 \ m_1\equiv m_2\equiv m_3\pmod2\right\}.
\]
The number of integer triples of squared norm $\ell$ is less than
$30\ell$, by fixing two coordinates as in Proposition~\ref{prop:tail}.
For every omitted vector, $A=\pi e^{-\delta}|v|^2>3$, and
\begin{equation}\label{eq:profile-ratios}
 \begin{alignedat}{3}
 2^{1/3}\pi&<4,&\qquad 2^{-4/3}\pi&<\frac54,&\qquad
 e^{8\delta}&<\frac54,\\
 e^{-39\cdot2^{1/3}\pi/40}&<\frac1{47},&\qquad
 e^{-39\cdot2^{-4/3}\pi/40}&<\frac3{10}.
 \end{alignedat}
\end{equation}
Since $e^{-\delta}>39/40$, the two exponential bounds also hold
with $39/40$ replaced by $e^{-\delta}$.
Dropping screening and using $t^{-1/2}\le1$ shows that an omitted
summand in \eqref{eq:profile-Bell4}, including its factor $1/2$, is at
most
\[
 \frac58 e^{-A}(A^3+10A^2+37A+75+75/A)
 \le\frac58e^{-A}(A^3+10A^2+37A+100).
\]
Thus, after retaining FCC shells $n\le9$ and BCC shells $\ell\le30$,
the omitted fourth derivative norm is less than
\begin{align}\label{eq:profile-tail4}
 R_4={}&\frac{75}{4}\sum_{n=10}^\infty
 \frac{n(64n^3+160n^2+148n+100)}{47^n}\notag\\
 &+\frac{75}{4}\sum_{\ell=31}^\infty
 \ell\left(\frac3{10}\right)^\ell
 \left(\frac{125}{64}\ell^3+\frac{125}{8}\ell^2
              +\frac{185}{4}\ell+100\right)
 <10^{-6}.
\end{align}
The fourth derivative bound $M_4$ used below is the finite sum in
\eqref{eq:profile-Bell4}, plus $10^{-6}$.

At the origin the corresponding absolute Hessian tail is bounded by
\begin{equation}\label{eq:profile-tail2}
 R_2=30\sum_{n\ge10}\frac{n(4n+20)}{49^n}
 +30\sum_{\ell\ge31}\ell(3/10)^\ell(5\ell/4+20)
 <10^{-10}.
\end{equation}
Indeed, a unit Frobenius direction has $|v^THv|\le|v|^2$ and
$|v^TH^2v|\le|v|^2$. After dropping screening, the contribution of a shell $S$
in absolute value is at most
$\tfrac12\#S\,e^{-A}(A+3+3/A)\le\#S\,e^{-A}(A+20)$, where
$A=\pi|v|^2>3$; this proves the displayed tail bound.
For completeness, integrating the third derivative polynomial
$(At)^3+3(At)^2+At$ bounds the omitted third derivative norm by
\begin{equation}\label{eq:profile-tail3}
 R_3=18\sum_{n\ge10}\frac{n(16n^2+24n+26)}{49^n}
 +18\sum_{\ell\ge31}\ell(3/10)^\ell(25\ell^2/16+15\ell/2+26)<10^{-8}.
\end{equation}
For example, the polynomial before its final simplification is
$e^{-A}(A^2+6A+13+13/A)$, and the prefactor $3/5$ used here exceeds
the required $1/2$.
All inequalities in \eqref{eq:profile-tail4}--\eqref{eq:profile-tail3}
are rational: for their evaluation one can use
\[
 \sum_{n=N}^\infty n^k r^n
 =\left(r\frac{d}{dr}\right)^k\frac{r^N}{1-r},\qquad 0<r<1.
\]
\end{proof}

For the truncated FCC sum, retain the integer vectors satisfying
\[
 z\in\Z^3,\qquad 0<|z|^2\le18,\qquad z_1+z_2+z_3\in2\Z;
\]
for the BCC sum, retain those satisfying
\[
 z\in\Z^3,\qquad 0<|z|^2\le30,\qquad
 z_1\equiv z_2\equiv z_3\pmod2.
\]
Group the vectors by $m=|z|^2$. Every coordinate lies in $[-6,6]$.
The physical squared scales are $2^{-2/3}$ for FCC and $2^{-4/3}$ for
BCC; $s$ denotes the applicable scale below. For $C=\F$ the direct sum
uses the first family and the dual sum the second; for $C=\B$
interchange the two families. For each shell form
the integer moments
\[
 m_2=\sum_z z_1^2,\quad
 a_D=\sum_z(z_1^4-z_1^2z_2^2),\quad
 a_O=2\sum_z z_1^2z_2^2,
\]
and the three sets of integer moments in
\eqref{eq:profile-thirdmoment}. The physical moments in
\eqref{eq:profile-mu} are $M=sm_2$ and $A_\tau=s^2a_\tau$.
Thus every retained shell moment is a finite integer sum times a power
of $2^{-2/3}$ or $2^{-4/3}$.

For $1\le j\le4$ put
\[
 L_j(A,c)=\sum_{n=0}^{61}\frac{(-c)^n}{n!}K_{j-n}(A),\qquad
 U_j(A,c)=\sum_{n=0}^{60}\frac{(-c)^n}{n!}K_{j-n}(A).
\]
Only the indices down to $-60$ are needed in the expressions below.
Taylor's formula with its signed integral remainder gives, for every
$c\ge0$,
\begin{equation}\label{eq:profile-Ksandwich}
 L_j(A,c)\le K_j(A,c)\le U_j(A,c).
\end{equation}
No assumption about decreasing Taylor terms is needed. Both $K_j(A,c)$
and $T_j(A,c)$ decrease in $c$.

On a closed parameter interval $[u,v]$, define $h_\tau$ by the finite sum
\begin{align}\label{eq:profile-hlower}
 h_\tau={}&\frac12\sum_{\text{direct shells}}
 \{\pi^2s^2a_\tau L_2(\pi sm,v)
                  -\pi sm_2 U_1(\pi sm,u)\}\notag\\
 &+\frac12\sum_{\text{dual shells}}
 \{\pi^2s^2a_\tau T_2(\pi sm,v)
                  -\pi sm_2 T_1(\pi sm,u)\}-10^{-10}.
\end{align}
Thus $\mu_\tau(c)\ge h_\tau$ throughout this interval. Set
$h=\min(h_D,h_O)$.
To bound the cubic variation, enclose the three cubic coefficients using
\eqref{eq:profile-thirdmoment} and, in a direct summand,
\[
 K_j(\pi sm,c)\in[L_j(\pi sm,v),U_j(\pi sm,u)],
\]
and in a dual summand use
$[T_j(\pi sm,v),T_j(\pi sm,u)]$.
Summing the signed moment enclosures and adding
$[-10^{-6},10^{-6}]$ encloses each full cubic coefficient.
The added interval covers the omitted shells: each test matrix has
Frobenius norm cubed less than $15$, and $15R_3<10^{-6}$.
Let $b_\alpha,b_\beta,b_\gamma$ be the corresponding absolute upper
bounds, and put
\begin{equation}\label{eq:profile-Cbound}
 C_3=\frac5{12}b_\alpha+\frac16b_\beta+\frac5{72}b_\gamma.
\end{equation}
Finally, define $M_4$ by the finite fourth derivative sum
\eqref{eq:profile-Bell4}, taking $c=u$, replacing each direct $K_j$ by
$U_j$, retaining each dual $T_j$ exactly, and adding $10^{-6}$.
The resulting bound holds on the whole shape ball and parameter interval.

\begin{lemma}\label{lem:local-taylor-profile}
For $C=\F$, let $0\le j\le999$; for $C=\B$, let $0\le j\le24$.
There are bounds $h$, $C_3$, and $M_4$, uniform for
$c\in[j/100,(j+1)/100]$, such that, for every symmetric traceless
matrix $H$,
\begin{align*}
 \left.\frac d{dt}F_\kappa(e^{tH/2}C)\right|_{t=0}&=0,\\
 \left.\frac{d^2}{dt^2}F_\kappa(e^{tH/2}C)\right|_{t=0}
 &\ge h\|H\|_{\fro}^2,\\
 \left|\frac{d^3}{dt^3}F_\kappa(e^{tH/2}C)\right|_{t=0}
 &\le C_3\|H\|_{\fro}^3,\\
 \left|\frac{d^4}{dt^4}F_\kappa(e^{(X+tH)/2}C)\right|_{t=0}
 &\le M_4\|H\|_{\fro}^4
 \qquad(\|X\|_{\fro}\le1/40).
\end{align*}
They satisfy
\begin{equation}\label{eq:local-taylor-margin}
 \frac h2-\frac{C_3}{240}-\frac{M_4}{38400}
 >\begin{cases}10^{-6},&C=\F,\\10^{-3},&C=\B.\end{cases}
\end{equation}
\end{lemma}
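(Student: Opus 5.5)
This is a finite certified computation on each screening subinterval. The proof will assemble the already-defined enclosures $h$, $C_3$, $M_4$ and check the margin \eqref{eq:local-taylor-margin} in interval arithmetic.

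\emph{First variation.} The vanishing of the first variation is the symmetry statement of Lemma~\ref{lem:local-variation-formulas}, valid for every $c\ge0$.

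\emph{Second variation.} Decompose a traceless symmetric $H$ into its diagonal-traceless and off-diagonal parts. These are orthogonal eigenspaces of the Hessian, with eigenvalues $\mu_D(c)$ and $\mu_O(c)$, so the second derivative is at least $\min(\mu_D,\mu_O)\|H\|_{\fro}^2$. On $c\in[u,v]=[j/100,(j+1)/100]$ the sandwich \eqref{eq:profile-Ksandwich} applies. Together with the monotone decrease of $K_j$ and $T_j$ in $c$, it gives $K_2(\cdot,c)\ge L_2(\cdot,v)$ and $K_1(\cdot,c)\le U_1(\cdot,u)$. The same holds for the $T_j$, evaluated exactly at the endpoints. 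Since $a_\tau\ge0$ and $m_2\ge0$, the signs are correct. Adding the truncation error $10^{-10}$ from Lemma~\ref{lem:local-derivative-tails} yields $\mu_\tau(c)\ge h_\tau$, hence the bound with $h=\min(h_D,h_O)$.

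\emph{Third variation.} Use \eqref{eq:profile-cubic} and the norm bounds \eqref{eq:profile-cubicnorm}. By homogeneity it suffices to treat $\|H\|_{\fro}=1$. Then the third derivative is at most $|\alpha|\tfrac5{12}+|\beta|\tfrac16+|\gamma|\tfrac5{72}\le C_3$. Here $b_\alpha,b_\beta,b_\gamma$ bound the coefficient enclosures obtained from the evaluations at $H_D,H_T,H_O$. The tail of each is covered by $15R_3<10^{-6}$.

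\emph{Fourth variation.} For $\|X\|_{\fro}\le1/40$, bound each retained summand by \eqref{eq:profile-Bell4}. Use $K_j(A,c)\le K_j(A,u)\le U_j(A,u)$ for $c\ge u$, and the monotonicity of $T_j$. Add the tail $R_4<10^{-6}$. This gives $M_4$ uniformly on the ball and on the interval.

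\emph{Margin and the main obstacle.} The remaining content is the inequality \eqref{eq:local-taylor-margin}. It is verified in rigorous interval arithmetic for each of the $1000$ FCC intervals and $25$ BCC intervals, with all moments being exact integers times powers of $2^{-1/3}$. The kernels $K_j(A)$ for $A=\pi s m$ and negative indices down to $-60$ would be computed by stable recurrences from incomplete gamma values, for instance $K_{j}$ via integration by parts relating $K_{j+1}$, $K_j$ and $e^{-A}$, with outward rounding. The $T_j$ have closed forms.

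I expect the main obstacle to be the FCC intervals at large $c$, up to $c=10$. There the Taylor sums $L_2$ and $U_1$ with $61$ terms involve alternating terms of size $c^n/n!$. Cancellation makes the enclosure width comparable to the true Hessian, which decays like $e^{-\kappa a}$, while the required margin is only $10^{-6}$. If the width is too large I would first refine the subintervals there. Failing that, I would replace the Taylor sandwich by direct rigorous quadrature of $K_j(A,c)$ on each interval, using the monotonicity in $c$ only at the endpoints.

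Once the margin holds, the Taylor expansion of $t\mapsto F_\kappa(e^{tH/2}C)$ at $\|H\|_{\fro}=1/40$ gives the stated local lower bounds. That consequence is used later, not in the present lemma.
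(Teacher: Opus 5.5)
Your proposal follows the paper's proof: the four variation bounds come from the derivative formulas of Lemma~\ref{lem:local-variation-formulas}, the signed Taylor sandwich \eqref{eq:profile-Ksandwich} combined with monotonicity in $c$, and the tail estimates of Lemma~\ref{lem:local-derivative-tails}, while the margin \eqref{eq:local-taylor-margin} is exactly the finite interval computation of Lemma~\ref{lem:local-profile-checks}. Your anticipated obstacle is misdiagnosed, however, and neither refinement nor quadrature is needed: $U_j-L_j$ is the single term $c^{61}K_{j-61}(A)/61!$, which is below $10^{-23}e^{-A}$ for $c\le10.01$, and the cancellation among the alternating terms costs at most a relative factor $e^{2c}$ (under nine decimal digits), which the $256$-bit outward-rounded arithmetic absorbs; the only real tightness near $c=10$ is that $h$, $C_3$ and $M_4$ all scale like $e^{-\kappa a}\approx3\cdot10^{-6}$, against the absolute target $10^{-6}$.
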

\begin{proof}
The preceding derivative formulas and truncation bounds give the four
variation estimates, uniformly on each closed parameter interval and
on the whole shape ball. The finite endpoint inequalities in
Lemma~\ref{lem:local-profile-checks} of Appendix~\ref{app:local-numerics}
are precisely \eqref{eq:local-taylor-margin}.
\end{proof}

\begin{proof}[Proof of Theorem~\ref{thm:profile-local}]
Taylor's theorem and Lemma~\ref{lem:local-taylor-profile} give
\begin{align*}
 F_\kappa(e^{X/2}C)-F_\kappa(C)
 &\ge\frac h2\|X\|_{\fro}^2
       -\frac{C_3}{6}\|X\|_{\fro}^3
       -\frac{M_4}{24}\|X\|_{\fro}^4\\
 &\ge\left(\frac h2-\frac{C_3}{240}
                    -\frac{M_4}{38400}\right)\|X\|_{\fro}^2.
\end{align*}
This proves \eqref{eq:profile-B-local} and the part $c\le10$ of
\eqref{eq:profile-F-local}.

We next prove \eqref{eq:profile-high-local}. Put $d=3/100$,
$s=\kappa a$, and write
$A-\Id=T=\eta\Id+S$ as in Lemma~\ref{lem:det}. In particular,
\begin{equation}\label{eq:profile-high-det}
 \eta\le\frac9{50}\|S\|_{\fro}^2,\qquad
 \|S\|_{\fro}<\frac{13}{250}.
\end{equation}
Set $f(q)=q^{-1/2}e^{-\kappa\sqrt q}$ and $\beta=(s+1)/2$.
The identity
$(\log f)''(q)=\kappa/(4q^{3/2})+1/(2q^2)>0$ gives
\begin{equation}\label{eq:logtangent}
 f(a^2(1+b))\ge f(a^2)e^{-\beta b}\qquad(b>-1).
\end{equation}

We first bound the possible loss from all higher shells. On a shell of
squared radius $q=a^2n$, sign changes and permutations imply
$\sum vv^T=(N_nq/3)\Id$. Since $f$ is convex,
\[
 \sum_{v\text{ in shell}}\{f(v^TAv)-f(q)\}
 \ge f'(q)\sum_vv^TTv=\eta N_nqf'(q).
\]
Moreover,
\[
 -qf'(q)=\frac{\kappa\sqrt q+1}{2}f(q)
 \le\beta f(a^2)e^{-s(\sqrt n-1)},
\]
because $(s\sqrt n+1)/\sqrt n\le s+1$.
Proposition~\ref{prop:tail} therefore bounds the total higher-shell
loss, before the pair factor, by $3\beta\eta f(a^2)$.

For the twelve unit contact directions $w=v/a$,
\[
 |w^TTw|\le d,\qquad \sum_w w^TTw=12\eta,
\]
\[
 \sum_w(w^TTw)^2
 =\sum_iT_{ii}^2+(\tr T)^2+4\sum_{i<j}T_{ij}^2
 \ge\|T\|_{\fro}^2.
\]
Using $e^{-\beta b}\ge1-\beta b+\frac12\beta^2e^{-\beta d}b^2$
in \eqref{eq:logtangent} gives the first-shell increase before the pair factor:
\[
 \sum_w\{f(a^2(1+w^TTw))-f(a^2)\}
 \ge f(a^2)\left(\frac{\beta^2}{2}e^{-\beta d}\|T\|_{\fro}^2-12\beta\eta\right).
\]
The higher shells can decrease the energy by at most
$3\beta\eta f(a^2)$. By \eqref{eq:det},
\[
 15\eta\le\frac{15}{6(1-d)}\|T\|_{\fro}^2
 =\frac{250}{97}\|T\|_{\fro}^2.
\]
Thus the unhalved energy increase is bounded below by
\begin{equation}\label{eq:profile-high-quadratic}
 \beta f(a^2)\left(\frac\beta2e^{-3\beta/100}
                   -\frac{250}{97}\right)\|T\|_{\fro}^2.
\end{equation}
For $13/2\le\beta\le64$, the function
$\beta e^{-3\beta/100}$ attains its minimum at an endpoint.
Its value at the left endpoint is at least
$\frac{13}{2}(1-39/200)=2093/400$, while at the
right endpoint it exceeds $8$, since $e^{48/25}<8$. Thus
\[
 \frac\beta2e^{-3\beta/100}-\frac{250}{97}
 \ge\frac{2093}{800}-\frac{250}{97}
 =\frac{3021}{77600}>\frac1{40}.
\]
Restoring the pair factor proves \eqref{eq:profile-high-local}
for this range of $\beta$.

For $\beta\ge64$, the twelve contact values are
$\eta-S_{kk}/2\pm S_{ij}$, each repeated twice, with
$(k,i,j)=(1,2,3),(2,1,3),(3,1,2)$. Hence their deformed energy is
at least $12f(a^2)e^{-\beta\eta}M$, where
\[
 M=\frac16\sum_{(k,i,j)}
       \bigl(e^{\beta S_{kk}/2+\beta S_{ij}}
             +e^{\beta S_{kk}/2-\beta S_{ij}}\bigr).
\]
The six exponents have mean zero and variance
\begin{equation}\label{eq:variance}
 \sigma^2=\frac{\beta^2}{12}
 \left(\sum_iS_{ii}^2+4\sum_{i<j}S_{ij}^2\right)
 \ge\frac{\beta^2}{12}\|S\|_{\fro}^2.
\end{equation}
Lemma~\ref{lem:moment} and \eqref{eq:variance} give
\[
 \begin{aligned}
 \sigma\le1:\quad \log M
 &\ge\frac{\sigma^2}{13}
 \ge\frac{\beta^2}{156}\|S\|_{\fro}^2
 \ge\frac9{25}\beta\|S\|_{\fro}^2\ge2\beta\eta,\\
 \sigma\ge1:\quad \log M
 &\ge\frac\sigma{13}
 \ge\frac\beta{13\sqrt{12}}\|S\|_{\fro}
 \ge\frac9{25}\beta\|S\|_{\fro}^2\ge2\beta\eta.
 \end{aligned}
\]
Here the first line uses $\beta\ge64$ and $64/156>9/25$;
the second uses $\|S\|_{\fro}<13/250$ and
$1/(13\sqrt{12})>1/52>(9/25)(13/250)$.
The final inequalities are \eqref{eq:profile-high-det}.
The first-shell gain is therefore at least
$12f(a^2)(e^{\beta\eta}-1)\ge12\beta\eta f(a^2)$.
After the higher-shell loss and the pair factor,
\[
 E_\kappa(A^{1/2}\F)-E_\kappa(\F)
 \ge\frac92\beta\eta f(a^2)
 \ge\frac9{12(1+d)}\beta f(a^2)\|T\|_{\fro}^2,
\]
which is stronger than \eqref{eq:profile-high-local}. Absolute
convergence justifies summing the shellwise tangent inequalities.

It remains to prove \eqref{eq:profile-F-local} when $c>10$.
Then $s^2=2^{7/3}\pi c>156>144$. For traceless symmetric $X$,
$\|X\|_{\op}\le\sqrt{2/3}\|X\|_{\fro}<1/48$, so
\[
 \|e^X-\Id\|_{\op}<e^{1/48}-1<1/47<3/100.
\]
Comparison of the exponential and geometric series proves the middle
inequality. Also,
$\|e^X-\Id\|_{\fro}^2\ge e^{-1/20}\|X\|_{\fro}^2
\ge(4/5)\|X\|_{\fro}^2$, by applying the scalar mean value theorem to
each eigenvalue. Thus \eqref{eq:profile-high-local}, with $A=e^X$, gives
\[
 \begin{aligned}
 E_\kappa(e^{X/2}\F)-E_\kappa(\F)
 &\ge\frac{s+1}{160a}e^{-s}\|e^X-\Id\|_{\fro}^2\\
 &\ge\frac{s+1}{200a}e^{-s}\|X\|_{\fro}^2.
 \end{aligned}
\]
Since $s=\kappa a$, this is the remaining part of
\eqref{eq:profile-F-local}. All coefficients are positive, which also
proves the asserted equality cases.
\end{proof}

\begin{corollary}\label{cor:profile-op-embedding}
The comparisons \eqref{eq:profile-B-local} and
\eqref{eq:profile-F-local} apply when $A=A^T>0$, $\det A=1$, and
$\|A-\Id\|_{\op}\le7/500$, with deformation $A^{1/2}$ of the
corresponding reference lattice and the same parameter ranges.
\end{corollary}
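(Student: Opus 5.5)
The idea is to write any admissible $A$ as $e^{X}$ with $X=\log A$. Then $X$ is symmetric, and $\tr X=\log\det A=0$. We check that $\|X\|_{\fro}\le1/40$, and then apply Theorem~\ref{thm:profile-local} directly. The deformation $e^{X/2}C$ equals $A^{1/2}C$, so the comparisons \eqref{eq:profile-B-local} and \eqref{eq:profile-F-local} transfer verbatim, with the same parameter ranges and the same equality cases. We may also re-express the right-hand sides via $\|A-\Id\|_{\fro}$ if wanted, since $\|e^X-\Id\|_{\fro}$ and $\|X\|_{\fro}$ are comparable.

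The key step is the norm estimate. Diagonalize $A$ with eigenvalues $\lambda_i\in[1-\epsilon,1+\epsilon]$, where $\epsilon=7/500$. Then $X$ has eigenvalues $\log\lambda_i$, so
\[
 \|X\|_{\op}\le\max\{-\log(1-\epsilon),\log(1+\epsilon)\}=-\log(1-\epsilon)<\frac{\epsilon}{1-\epsilon}=\frac{7}{493}.
\]
Naively $\|X\|_{\fro}\le\sqrt3\|X\|_{\op}\approx0.0246$, which is barely below $1/40=0.025$; I would check $\sqrt3\cdot7/493<1/40$, i.e.\ $280\sqrt3<493$, which holds since $280\sqrt3\approx484.97$.

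Tracelessness sharpens this bound. The eigenvalues $x_i$ of $X$ sum to zero and satisfy $|x_i|\le m:=-\log(1-\epsilon)$. The maximum of $\sum x_i^2$ under these constraints is $2m^2+m^2\cdot0$ from the configuration $(m,-m,0)$, or $m^2+m^2/4+m^2/4$ from $(m,-m/2,-m/2)$, so $\sum x_i^2\le2m^2$ and $\|X\|_{\fro}\le\sqrt2\,m$. This gives comfortable room: $\sqrt2\cdot 0.0141<0.02<1/40$.

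The only delicate point is the explicit numerical margin, which is handled by the rational bound above. Everything else is a direct substitution, and the equality statement follows because $X=0$ if and only if $A=\Id$.
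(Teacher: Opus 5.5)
Your proposal is correct and is essentially the paper's proof: take $X=\log A$, bound each eigenvalue by $|\log\lambda|\le\epsilon/(1-\epsilon)=7/493$, and conclude from $\|X\|_{\fro}^2\le3(7/493)^2=147/243049<1/1600$, which is exactly your check $280\sqrt3<493$. Your tracelessness sharpening to $\|X\|_{\fro}\le\sqrt2\,m$ is also valid, since the maximum of $\sum x_i^2$ on the hexagon $\{\sum x_i=0,\ |x_i|\le m\}$ is attained at the vertices $(m,-m,0)$, but it is not needed.
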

\begin{proof}
If $\det A=1$ and $\|A-\Id\|_{\op}\le7/500$, then
$X=\log A$ is symmetric and traceless, and
\[
 \|X\|_{\fro}^2\le3(7/493)^2
 =147/243049<1/1600.
\]
Here $|\log(1+t)|\le|t|/(1-7/500)$ on the stated interval.
\end{proof}

\section{Global comparison}\label{sec:global-comparison}
\label{sec:screening-profile-lemma}
We now compare all lattice shapes uniformly in the screening parameter.
The following theorem is the global estimate used to prove
Theorem~\ref{thm:main}.

\begin{theorem}\label{thm:global-comparison}
For every $c\ge0$ and every lattice $L$ with $|L|=1$,
\[
 P(L,c)\ge\min\{P(\F,c),P(\B,c)\}.
\]
Equality is possible only if $L$ is orthogonally equivalent to $\F$ or $\B$.
\end{theorem}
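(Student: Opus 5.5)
We combine the compact reduction, the local comparisons and a corner-convexity covering. Fix $c\ge0$. By Proposition~\ref{prop:compact-reduction}, every minimizer of $P(\cdot,c)$ has a representative $Y(\xi)$ with $\xi=(\ell_1,\ell_2,x_1,x_2,x_3)$ in the rectangle $\mathcal R$ defined by \eqref{eq:endpoint-rational-root}, with $\ell_i=\log y_i$. It therefore suffices to show two things. First, every $\xi\in\mathcal R$ whose Gram matrix lies in a neighbourhood of an integral equivalent of $\F$ or $\B$ is handled by Theorem~\ref{thm:profile-local}. Second, every other $\xi\in\mathcal R$ satisfies the strict inequality $P(Y(\xi),c)>\min\{P(\F,c),P(\B,c)\}$.

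For the neighbourhoods, we list the finitely many reduced representatives $g^TY_Cg$, with $g\in GL_3(\Z)$, of $C\in\{\F,\B\}$ that lie in $\mathcal R$. Using Corollary~\ref{cor:profile-op-embedding}, we choose explicit small sub-rectangles around each of them on which $\|Y_C^{-1/2}Y(\xi)Y_C^{-1/2}-\Id\|_{\op}\le7/500$. On such a sub-rectangle, \eqref{eq:profile-F-local} gives $P(L,c)\ge P(\F,c)$ for every $c$, with equality only at $\F$. Near $\B$, \eqref{eq:profile-B-local} applies only for $c\le1/4$. For $c>1/4$ we instead need $P(\B,c)>P(\F,c)$ together with a bound $P(L,c)>P(\F,c)$ on the BCC sub-rectangles. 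This is expected to follow from the same corner argument, because there $\B$ is strictly above $\F$.

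On the complement, we partition $\mathcal R$ into boxes $\mathcal B$ and partition the $c$-axis into intervals $[u,v]$, and prove a lower bound for each pair. Every summand of $P$ is a completely monotone function of $Y[m]$ or $Y^{-1}[m]$, and is decreasing in these arguments. Hence we may replace $Y[m]$ by an upper bound, which is the same as maximizing $\tr\{mm^TY\}$. Lemma~\ref{lem:matrix-convexity} then bounds each quadratic form by its maximum over the $32$ corners of $\mathcal B$. This yields
\[
P(Y(\xi),c)\ge\tfrac12\sum_m\{I(\max_{\mathcal C(\mathcal B)}Y[m],v)+J(\max_{\mathcal C(\mathcal B)}Y^{-1}[m],v)\},
\]
where we truncate to finitely many $m$ (a valid lower bound, since all terms are positive) and use monotonicity in $c$. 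This must be compared with an upper bound for $\min\{P(\F,c),P(\B,c)\}$ on $[u,v]$. Near the transition $c_*=\kappa_*^2/(4\pi)$ the two reference energies almost coincide, so we subtract the reference first. We expand both sides as polynomials in $c$ using the Taylor sandwich \eqref{eq:profile-Ksandwich}, and certify positivity of the difference on each interval via Bernstein coefficients, using interval arithmetic. For large $c$ no bounded partition suffices. There we normalize by $2ae^{\kappa a}$ and argue with the first shells. Any competitor outside the FCC neighbourhood has either a vector shorter than $a$, or fewer than twelve vectors of length close to $a$ and a compensating vector that is strictly shorter. The normalized energy is then convex or monotone in $s=\kappa a$, so a check at finitely many values extends to the whole half-line, with Proposition~\ref{prop:tail} controlling the tails.

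The main obstacle is the covering near $c_*$ together with the gap between the FCC and BCC neighbourhoods. Box sizes must shrink until the corner bound beats the reference uniformly, while the margin away from the neighbourhoods is only of the order of the local quadratic constants ($10^{-6}$ for $\F$). This forces a large adaptive partition. The large-$c$ half-line argument is the second difficulty. Strictness and the equality case follow because every certified box outside the neighbourhoods gives strict inequality, and inside them the local theorem gives equality only at $\F$ or $\B$.
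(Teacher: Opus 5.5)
Your overall architecture is the paper's. It has cusp exclusion and the reduced rectangle, and local neighbourhoods from Theorem~\ref{thm:profile-local}, with the BCC one used only for $c\le1/4$ and BCC-neighbourhood boxes compared against $\F$ beyond that. It also has corner convexity on shape boxes, Bernstein certification in $c$ after subtracting a reference, and a convexity continuation of $2ae^{\kappa a}E_\kappa$ on a half-line. The step that fails in practice is your shape lower bound.

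Replacing each $Y[m]$ and $Y^{-1}[m]$ separately by its corner maximum is a valid inequality, but it throws away the first-order cancellation between shells. Even at a critical point of the energy, where the first variation vanishes, the loss $\frac12\sum_m|I_q(Q[m],c)|\bigl(\max_zY_z[m]-Q[m]\bigr)+\frac12\sum_m|J_q(Q^{-1}[m],c)|\bigl(\max_zY_z^{-1}[m]-Q^{-1}[m]\bigr)$ is linear in the box width $w$. Its coefficient is of the size of the nearest-shell derivatives, and nothing cancels. By contrast, the true excess just outside the local neighbourhoods is only quadratic, of order $\mu\delta^2$ with $\delta\approx10^{-2}$; Theorem~\ref{thm:profile-local} guarantees only $10^{-3}\|X\|_{\fro}^2$ near $\B$ and $10^{-6}\|X\|_{\fro}^2$ near $\F$. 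So $w$ must be of order $\mu\delta^2$, and the number of boxes surrounding each neighbourhood grows like $(\delta/w)^5$. Compactness gives a finite cover in principle, but not a computable one.

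The missing idea is the tangent-plane bound in Lemma~\ref{lem:screening-profile}. By convexity of $I(\cdot,c)$ and $J(\cdot,c)$ in $q$, linearize at the box centre:
\[
 P(Y,c)\ge p(c)+\tr\{S(c)(Q-Y)+T(c)(Q^{-1}-Y^{-1})\}.
\]
Only then apply Lemma~\ref{lem:matrix-convexity}, once, to the single functional $\tr\{S(c)Y+T(c)Y^{-1}\}$. Its gradient at $Q$ is minus that of the truncated energy, so the corner loss is the (small) gradient at $Q$ times $w$ plus $O(w^2)$. Hence $w$ may scale with the distance from the reference. This is the shape analogue of the cancellation you rightly keep in $c$. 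Your display also freezes the competitor at $c=v$ by monotonicity, which would discard that $c$-cancellation too. What must be expanded in $s$ is the difference $F_{R,z}(u+s)$, with the coefficients and remainders of Lemma~\ref{lem:global-profile-numerics}.

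For large screening, your contact-vector dichotomy is not what drives the argument: both alternatives only say $\lambda_1(L)<a$, which gives no uniform rate. The paper uses two facts. Every summand $\frac a{|v|}e^{\kappa(a-|v|)}$ of the normalized energy is convex in $\kappa$, for every lattice. And $\Phi_\kappa(\F)$ is nonincreasing, because all FCC vectors have length at least $a$. Thus on each box one needs, at the single value $K=20$, two things:
\begin{enumerate}[label=(\roman*),leftmargin=*]
\item the value inequality $V_K(Y)>\Phi_K(\F)$, with the explicit reference tail \eqref{eq:high-reference-upper};
\item a box-uniform monotonicity certificate $\partial_\kappa V_\kappa|_{\kappa=K}\ge0$.
\end{enumerate}
Lemma~\ref{lem:halfline-principle} then gives the strict comparison for all $\kappa\ge K$. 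Value comparisons without such a monotonicity certificate (a derivative, or a secant inequality) do not extend to the half-line. The value bound again uses the tangent device. Because $h(q)=(a-\sqrt q)\frac a{\sqrt q}e^{K(a-\sqrt q)}$ is not monotone in $q$, the derivative bound needs interval minima or a tangent bound below $q_*$. The FCC neighbourhood is covered there by \eqref{eq:profile-high-local} with radius $3/100$. Finally, $c>32$ forces $\kappa>20$, so the two screening ranges overlap.
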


The cusp estimate and the elementary reduction restrict the comparison
to a fixed rectangle in five lattice coordinates. For $0\le c\le32$,
we subtract a cubic reference before expanding in the screening parameter;
this retains the cancellation in the energy difference. For
$\kappa\ge20$, convexity extends a comparison at one screening parameter
to the entire half-line. These two parameter ranges overlap.

Let $\mathcal Q$ be a rectangle in the reduced coordinates
$\xi=(\ell_1,\ell_2,x_1,x_2,x_3)$, and denote its $32$ corners by
$\mathcal C(\mathcal Q)$. Choose $\xi_0\in\mathcal Q$ and put
$Q=Y(\xi_0)$ and $Y_z=Y(z)$. For finite symmetric sets
$\mathcal M_I,\mathcal M_J\subset\Z^3\setminus\{0\}$, define
\begin{align*}
 p(c)&=\frac12\sum_{m\in\mathcal M_I}I(Q[m],c)
       +\frac12\sum_{m\in\mathcal M_J}J(Q^{-1}[m],c),\\
 S(c)&=-\frac12\sum_{m\in\mathcal M_I}I_q(Q[m],c)mm^T,\qquad
 T(c)=-\frac12\sum_{m\in\mathcal M_J}J_q(Q^{-1}[m],c)mm^T.
\end{align*}
Both $S(c)$ and $T(c)$ are positive semidefinite. Let $R$ be a reference
lattice, and let $p_R(c)$ be a finite positive Ewald sum satisfying
\begin{equation}\label{eq:profile-reference-enclosure}
 0\le P(R,c)-p_R(c)\le\tau_R
 \qquad(u\le c\le u+C),
\end{equation}
where $u\ge0$ and $C>0$. At each corner set
\[
 F_{R,z}(c)=p(c)-p_R(c)
 +\tr\{S(c)(Q-Y_z)+T(c)(Q^{-1}-Y_z^{-1})\}.
\]

\begin{lemma}[Comparison of screening profiles]\label{lem:screening-profile}
Suppose that, for $0\le s\le C$ and every $z\in\mathcal C(\mathcal Q)$,
\[
 \left|F_{R,z}(u+s)-\sum_{n=0}^Na_{z,n}s^n\right|
 \le r_zs^{N+1}.
\]
Define
\[
 b_{z,k}=\sum_{n=0}^k
 \frac{\binom{k}{n}}{\binom{N}{n}}C^na_{z,n},
 \qquad 0\le k\le N.
\]
Then, for $\xi\in\mathcal Q$ and $u\le c\le u+C$,
\begin{equation}\label{eq:screening-profile-bound}
 P(Y(\xi),c)-P(R,c)
 \ge\min_{\substack{z\in\mathcal C(\mathcal Q)\\0\le k\le N}}
       \{b_{z,k}-C^{N+1}r_z\}-\tau_R.
\end{equation}
When $N=0$, the coefficient formula means $b_{z,0}=a_{z,0}$.
\end{lemma}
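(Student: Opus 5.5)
The plan is to chain three lower bounds: drop positive tail terms, linearize the kernels by convexity in $q$, and use the corner convexity of Lemma~\ref{lem:matrix-convexity} to move the shape dependence to the corners. The remaining scalar polynomial in $s$ is then bounded below by its Bernstein coefficients. Fix $\xi\in\mathcal Q$, $c\in[u,u+C]$, and write $Y=Y(\xi)$.

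\emph{Step 1: truncation and tangent lines.} Every summand of $P(Y,c)$ in \eqref{eq:positive-P} is positive, so discarding all $m\notin\mathcal M_I$ in the direct sum and all $m\notin\mathcal M_J$ in the dual sum gives a lower bound. Both $q\mapsto I(q,c)$ and $q\mapsto J(q,c)$ are completely monotone, hence convex, so each lies above its tangent line:
\[
 I(Y[m],c)\ge I(Q[m],c)+I_q(Q[m],c)\bigl(Y[m]-Q[m]\bigr),
\]
and likewise $J(Y^{-1}[m],c)\ge J(Q^{-1}[m],c)+J_q(Q^{-1}[m],c)\bigl(Y^{-1}[m]-Q^{-1}[m]\bigr)$. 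We have $Y[m]-Q[m]=\tr\{mm^T(Y-Q)\}$. Summing with the factor $\frac12$ and recalling the definitions of $S(c)$ and $T(c)$ therefore gives
\[
 P(Y,c)\ge p(c)+\tr\{S(c)(Q-Y)+T(c)(Q^{-1}-Y^{-1})\}.
\]

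\emph{Step 2: reduction to corners.} Since $I_q,J_q\le0$, the matrices $S(c)$ and $T(c)$ are positive semidefinite. Lemma~\ref{lem:matrix-convexity}, applied with these matrices, gives
\[
 \tr\{S(c)Y+T(c)Y^{-1}\}\le\max_{z\in\mathcal C(\mathcal Q)}\tr\{S(c)Y_z+T(c)Y_z^{-1}\}.
\]
Inserting this into Step 1 and subtracting $p_R(c)$ yields $P(Y,c)-p_R(c)\ge\min_zF_{R,z}(c)$. By \eqref{eq:profile-reference-enclosure}, $P(R,c)\le p_R(c)+\tau_R$, so
\[
 P(Y,c)-P(R,c)\ge\min_z F_{R,z}(c)-\tau_R.
\]

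\emph{Step 3: Bernstein bound in the screening variable.} Fix a corner $z$ and put $c=u+s$ with $s\in[0,C]$. The hypothesis, with $r_z\ge0$ since it bounds an absolute value, gives
\[
 F_{R,z}(u+s)\ge\sum_{n=0}^Na_{z,n}s^n-r_zC^{N+1}.
\]
Substitute $s=Cx$ with $x\in[0,1]$ and use the standard conversion to the Bernstein basis,
\[
 x^n=\sum_{k=n}^N\frac{\binom kn}{\binom Nn}\binom Nk x^k(1-x)^{N-k}.
\]
Exchanging the order of summation shows
\[
 \sum_{n=0}^Na_{z,n}C^nx^n=\sum_{k=0}^Nb_{z,k}\binom Nk x^k(1-x)^{N-k},
\]
with $b_{z,k}$ exactly as defined in the statement. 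The Bernstein basis polynomials are nonnegative on $[0,1]$ and sum to $1$, so the polynomial is at least $\min_kb_{z,k}$. For $N=0$ the polynomial is the constant $a_{z,0}$, which matches the convention $b_{z,0}=a_{z,0}$. Taking the minimum over $z$ and combining with Step 2 gives \eqref{eq:screening-profile-bound}.

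Each step is routine. The points that need care are the sign bookkeeping in Step 1: it must produce $Q-Y_z$ rather than $Y_z-Q$ inside $F_{R,z}$, and the semidefiniteness of $S$ and $T$ must be checked so that Lemma~\ref{lem:matrix-convexity} applies. The Bernstein identity in Step 3 is a standard index exchange.
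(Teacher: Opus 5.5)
Your proof is correct and follows the paper's argument step for step. The steps are truncation with tangent-line convexity of the kernels, then corner reduction via Lemma~\ref{lem:matrix-convexity} together with the reference enclosure, then the Bernstein-basis lower bound after rescaling $s=Cx$. Your explicit remark that $r_z\ge0$, so that $r_zs^{N+1}\le r_zC^{N+1}$ on $[0,C]$, fills in a small step the paper leaves implicit.
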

\begin{proof}
Convexity of the kernels in squared distance and positivity of the
omitted competitor terms give
\[
 P(Y,c)\ge p(c)+\tr\{S(c)(Q-Y)+T(c)(Q^{-1}-Y^{-1})\}.
\]
By Lemma~\ref{lem:matrix-convexity},
$\tr\{S(c)Y+T(c)Y^{-1}\}$ is bounded above by its largest corner
value. Hence \eqref{eq:profile-reference-enclosure} implies
\[
 P(Y,c)-P(R,c)\ge\min_z F_{R,z}(c)-\tau_R.
\]
For $0\le n\le N$, the binomial theorem gives
\[
 t^n=\sum_{k=n}^N
 \frac{\binom{k}{n}}{\binom{N}{n}}\binom Nk t^k(1-t)^{N-k}.
\]
Consequently, with $t=s/C\in[0,1]$, the polynomial at corner $z$ has
the Bernstein representation
\[
 \sum_{n=0}^Na_{z,n}s^n
 =\sum_{k=0}^Nb_{z,k}\binom Nk t^k(1-t)^{N-k}.
\]
The basis functions are nonnegative and sum to one. The remainder
bound therefore gives \eqref{eq:screening-profile-bound}.
\end{proof}

The Taylor expansion is applied to the difference $F_{R,z}$, rather
than to separate upper and lower bounds for the two energies. Thus
cancellation between competitor and reference derivatives is retained.
The next lemma gives the coefficients and a uniform remainder on any
finite screening interval.

\begin{lemma}[Coefficients and remainders for the screening comparison]
\label{lem:global-profile-numerics}
For $q,\alpha>0$, $u\ge0$, and integers $j\ge-1$, $n\ge0$, set
\[
 H_j(q,u)=\int_1^\infty t^{-j-1/2}e^{-\pi qt-u/t}\,dt,
 \qquad
 A_n(\alpha)=\frac1{n!}\int_1^\infty t^ne^{-\alpha t}\,dt.
\]
The Taylor coefficients in Lemma~\ref{lem:screening-profile} are
\begin{align*}
 a_{z,n}={}&-\frac{p_R^{(n)}(u)}{n!}
 +\frac{(-1)^n}{2n!}\sum_{m\in\mathcal M_I}
 \Bigl[H_n(Q[m],u)
       +\pi H_{n-1}(Q[m],u)m^T(Q-Y_z)m\Bigr]\\
 &+\frac{(-1)^n}{2}\sum_{m\in\mathcal M_J}
 \left[\begin{aligned}[t]
 &A_n(\pi Q^{-1}[m]+u)\\
 &{}+\pi(n+1)A_{n+1}(\pi Q^{-1}[m]+u)
                      m^T(Q^{-1}-Y_z^{-1})m
 \end{aligned}\right].
\end{align*}
A remainder constant valid on every finite interval $0\le s\le C$ is
\begin{equation}\label{eq:profile-corner-remainder}
 \begin{aligned}
 r_z={}&\frac{(-1)^{N+1}p_R^{(N+1)}(u)}{(N+1)!}
 +\frac1{2(N+1)!}\sum_{m\in\mathcal M_I}H_{N+1}(Q[m],u)
 \\
 &+\frac12\sum_{m\in\mathcal M_J}A_{N+1}(\pi Q^{-1}[m]+u)
 \\
 &+\frac{\pi}{2(N+1)!}
 \sum_{i,j=1}^3\sum_{m\in\mathcal M_I}
 H_N(Q[m],u)\bigl|m_i m_j(Q-Y_z)_{ij}\bigr|
 \\
 &+\frac{\pi(N+2)}2
 \sum_{i,j=1}^3\sum_{m\in\mathcal M_J}
 A_{N+2}(\pi Q^{-1}[m]+u)
       \bigl|m_i m_j(Q^{-1}-Y_z^{-1})_{ij}\bigr|.
 \end{aligned}
\end{equation}
\end{lemma}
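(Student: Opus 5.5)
The plan is to expand each kernel of $F_{R,z}(u+s)$ directly in the screening increment $s$, with $Q$, $Y_z$ and the finite sets $\mathcal M_I,\mathcal M_J$ held fixed. Write $c=u+s$. For the direct kernel, the factor $e^{-c/t}=e^{-u/t}e^{-s/t}$ gives
\[
 I(q,u+s)=\sum_{n\ge0}\frac{(-s)^n}{n!}\int_1^\infty t^{-n-1/2}e^{-\pi qt-u/t}\,dt=\sum_{n\ge0}\frac{(-s)^n}{n!}H_n(q,u).
\]
Differentiating under the integral gives $-I_q(q,c)=\pi\int_1^\infty t^{1/2}e^{-\pi qt-c/t}\,dt$. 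Its $n$th Taylor coefficient in $s$ is $\pi(-1)^nH_{n-1}(q,u)/n!$, and this is why the index $j=-1$ is allowed.

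For the dual kernel, $J(q,u+s)=\int_1^\infty e^{-(\pi q+u)t}e^{-st}\,dt=\sum_n(-s)^nA_n(\pi q+u)$. Similarly $-J_q=\pi\int_1^\infty te^{-(\pi q+c)t}\,dt$ has coefficients $(-1)^n\pi(n+1)A_{n+1}(\pi q+u)$, using $\frac{1}{n!}\int t^{n+1}e^{-\alpha t}=(n+1)A_{n+1}(\alpha)$.

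Next I substitute these expansions into the definition of $F_{R,z}$. The trace terms become $\tr\{S(c)(Q-Y_z)\}=-\frac12\sum_m I_q(Q[m],c)\,m^T(Q-Y_z)m$, and the analogous expression with $T$. Subtracting the Taylor coefficients $p_R^{(n)}(u)/n!$ of the reference sum then yields exactly the stated formula for $a_{z,n}$. Since every sum is finite and all integrands are dominated by $e^{-\pi qt}$ times polynomials in $t$ (using $t\ge1$ and $e^{-s/t}\le1$, $e^{-st}\le1$), termwise differentiation is justified. No convergence of the full series is needed, only Taylor's formula to order $N$.

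For the remainder I would apply Taylor's theorem with Lagrange or integral remainder separately to each scalar piece. The key point is that each piece, as a function of $s\ge0$, has an $(N+1)$st derivative of constant sign and monotone absolute value. For example, $\partial_s^{N+1}I(q,u+s)=(-1)^{N+1}\int t^{-N-1-1/2}e^{-\pi qt-(u+s)/t}\,dt$, whose absolute value is maximal at $s=0$ and equals $H_{N+1}(q,u)$. The remainder for that piece is therefore bounded by $H_{N+1}(q,u)s^{N+1}/(N+1)!$. The same argument bounds the dual term by $A_{N+1}$, with the factorials absorbed in the definition of $A$.

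The trace terms are handled entrywise. The matrix $m^T(Q-Y_z)m$ has mixed signs, so I bound the remainder by $\sum_{i,j}|m_im_j(Q-Y_z)_{ij}|$ times the maximal derivative. For the direct part this is $\pi H_N/(N+1)!$, and for the dual part it is $\pi(N+2)A_{N+2}$. These bounds are cruder than necessary, but they are valid.

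For the reference term $p_R$, which is itself a finite positive Ewald sum of the same type, each summand has $(N+1)$st derivative of sign $(-1)^{N+1}$ and maximal modulus at $s=0$. Hence its remainder is bounded by $(-1)^{N+1}p_R^{(N+1)}(u)s^{N+1}/(N+1)!$, which is nonnegative. Summing all the pieces with the triangle inequality gives $r_z$ as displayed.

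The main obstacle is bookkeeping. The index shifts must be tracked carefully: $H_{n-1}$ for the $I_q$ term and $(n+1)A_{n+1}$ for the $J_q$ term. One must also confirm that the maximal derivative modulus occurs at $s=0$, which depends on monotonicity in $s$ of integrands with positive weights. Everything else is routine differentiation under finite sums of absolutely convergent integrals.
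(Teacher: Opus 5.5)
Your proposal is correct and follows essentially the paper's argument. You obtain the coefficients by expanding $e^{-s/t}$ and $e^{-st}$ under the integrals, and you bound the remainders via the Lagrange form together with monotonicity in $s$ of the constant-sign $(N+1)$st derivative; the paper instead uses the pointwise bound $|e^{-sv}-\sum_{n\le N}(-sv)^n/n!|\le (sv)^{N+1}/(N+1)!$ inside the integral, which is the same estimate. The treatment of $p_R$ by complete monotonicity and of the tangent terms by $|m^TDm|\le\sum_{i,j}|m_im_jD_{ij}|$ matches the paper. One cosmetic slip: the full series $\sum_n(-s)^nA_n(\pi q+u)$ converges only for $s<\pi q+u$, but as you note, only the finite Taylor formula with remainder is used.
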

\begin{proof}
Differentiation under the defining integrals gives, in particular,
\begin{align*}
 -\frac{\partial_c^n I_q(q,u)}{n!}&=\frac{\pi(-1)^n}{n!}H_{n-1}(q,u),\\
 -\frac{\partial_c^n J_q(q,u)}{n!}
 &=\pi(-1)^n(n+1)A_{n+1}(\pi q+u).
\end{align*}
The factor $n+1$ in the reciprocal tangent term is the factorial ratio
$(n+1)!/n!$. The coefficient of $s^n$ in $e^{-sv}$ is $(-v)^n/n!$,
and, for $s,v\ge0$, its absolute remainder after degree $N$ is at most
$s^{N+1}v^{N+1}/(N+1)!$. Apply this with $v=1/t$ in the direct
kernel and $v=t$ in the reciprocal kernel, including their squared-length
derivatives. Substitution into $F_{R,z}$ gives the displayed
coefficients and the first three terms of the remainder bound.
The reference contribution is nonnegative because $p_R$ is completely
monotone in the screening variable.

For a symmetric matrix $D$ and nonnegative weights $w_m$,
\[
 \sum_m w_m|m^TDm|
 \le\sum_{i,j=1}^3\sum_m w_m|m_i m_jD_{ij}|.
\]
Apply this to $D=Q-Y_z$ and $D=Q^{-1}-Y_z^{-1}$, using the
corresponding derivative remainders as weights. This gives the last
two terms of \eqref{eq:profile-corner-remainder}. Absolute values are
taken before summing the rank-one contributions, so no cancellation of
matrix entries is used to bound a remainder.
\end{proof}

The moments can be evaluated by integration by parts. At $u=0$,
\begin{align*}
 H_0(q,0)&=\frac{\erfc(\sqrt{\pi q})}{\sqrt q},&
 H_{-1}(q,0)&=\frac{e^{-\pi q}+H_0(q,0)/2}{\pi q},\\
 H_n(q,0)&=\frac{e^{-\pi q}-\pi qH_{n-1}(q,0)}{n-1/2}
 \quad(n\ge1).
\end{align*}
For $u>0$, use $H_0(q,u)=I(q,u)$ and
\[
 H_1(q,u)=\frac{\sqrt\pi}{2\sqrt u}\Bigl[
 e^{-2\sqrt{\pi qu}}\erfc(\sqrt{\pi q}-\sqrt u)
 -e^{2\sqrt{\pi qu}}\erfc(\sqrt{\pi q}+\sqrt u)\Bigr].
\]
The remaining moments satisfy
\begin{align*}
 H_{-1}(q,u)&=
 \frac{e^{-\pi q-u}+H_0(q,u)/2+uH_1(q,u)}{\pi q},\\
 H_{j+1}(q,u)&=
 \frac{\pi qH_{j-1}(q,u)+(j-1/2)H_j(q,u)-e^{-\pi q-u}}u
 \qquad(j\ge1).
\end{align*}
For $\alpha>0$, the reciprocal moments satisfy
\[
 A_0(\alpha)=\frac{e^{-\alpha}}\alpha,\qquad
 A_n(\alpha)=\frac{A_{n-1}(\alpha)+e^{-\alpha}/n!}\alpha
 \quad(n\ge1).
\]

For the reference sum, the FCC squared lengths are $2^{1/3}n$, where
$N_n=\#\{m\in\Z^3:|m|^2=2n\}$, and the BCC squared lengths are
$2^{-4/3}\ell$, where $M_\ell$ counts integer triples of squared norm
$\ell$ whose coordinates have the same parity. We use the shells
$n\le48$ and $\ell\le192$ in the finite reference sum $p_R$.
They are completely enumerated in $[-14,14]^3$.
A uniform bound for the omitted reference shells is
\begin{equation}\label{eq:profile-reference-tail}
 \tau=\frac{15e^{-49\pi2^{1/3}}}{\pi2^{1/3}(1-e^{-\pi2^{1/3}})}
 +\frac{15e^{-193\pi2^{-4/3}}}{\pi2^{-4/3}(1-e^{-\pi2^{-4/3}})}.
\end{equation}
For $c\ge0$, both kernels satisfy
$I(q,c),J(q,c)\le e^{-\pi q}/(\pi q)$.
The shell bound $N_n<30n$ was proved in Proposition~\ref{prop:tail};
likewise, fixing two coordinates gives
$M_\ell\le2(2\sqrt\ell+1)^2\le18\ell<30\ell$.
Since $R$ and $R^*$ are one FCC and one BCC lattice, the factor
$1/2$ in each Ewald sum and geometric summation give
$0\le P(R,c)-p_R(c)\le\tau$, uniformly for $R\in\{\F,\B\}$
and $c\ge0$. Thus \eqref{eq:profile-reference-enclosure} holds with
$\tau_R=\tau$.

For a shape rectangle $\mathcal Q$ and $J=[u,u+C]$, the explicit
sufficient condition for a strict comparison on $\mathcal Q\times J$ is
\begin{equation}\label{eq:numerical-profile-margin}
 \min_{\substack{z\in\mathcal C(\mathcal Q)\\0\le k\le N}}
       \{b_{z,k}-C^{N+1}r_z\}>\tau.
\end{equation}
The finite competitor sets are fixed on each rectangle, and positivity
allows the omitted terms to be discarded. For the direct and reciprocal
sums, the sets are chosen independently from
$\{-3,-2,\ldots,3\}^3\setminus\{0\}$. A vector is retained only when
its squared length $q$, evaluated at the centre of the rectangle,
satisfies $\pi q<60$. One representative of each pair $\{m,-m\}$
may be used if the factor $1/2$ is removed.

For the local comparisons we use the reduced reference matrices
\begin{equation}\label{eq:global-reference-grams}
 Y_F=2^{1/3}\begin{pmatrix}1&1/2&1/2\\1/2&1&1/2\\1/2&1/2&1\end{pmatrix},
 \qquad
 Y_B=2^{-4/3}\begin{pmatrix}3&1&2\\1&3&2\\2&2&4\end{pmatrix}.
\end{equation}
The reference may be chosen separately on each rectangle. Indeed,
$P(L,c)>P(R,c)$ for either $R\in\{\F,\B\}$ implies
$P(L,c)>\min\{P(\F,c),P(\B,c)\}$. The same observation applies to
each local comparison on its stated parameter range.

\begin{lemma}[Comparison on the bounded screening range]
\label{lem:finite-cover}
For $0\le c\le32$ and every lattice $L$ with $|L|=1$,
\[
 P(L,c)\ge\min\{P(\F,c),P(\B,c)\},
\]
and equality implies that $L$ is orthogonally equivalent to $\F$ or $\B$.
\end{lemma}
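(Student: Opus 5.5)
Fix $c\in[0,32]$. By Proposition~\ref{prop:compact-reduction}, every global minimizer of $P(\cdot,c)$ has a Gram matrix $Y(\xi)$ with $\xi$ in the rectangle \eqref{eq:endpoint-rational-root}. Lemma~\ref{lem:endpoint-cusp} already gives $P(L,c)>P(\F,c)$ whenever $\lambda_1(L)\le4/5$. So it suffices to show that $P(Y(\xi),c)\ge\min\{P(\F,c),P(\B,c)\}$ for $\xi$ in that rectangle, with strict inequality unless $Y(\xi)$ represents $\F$ or $\B$. A lattice may have several reduced representatives, so I will work with the coordinates and only check that every excluded region is a neighbourhood of a representative of $\F$ or $\B$.

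The plan is a finite covering of $[11/25,4/3]\times[147/1000,4/3]\times[0,1/2]\times[0,1]\times[0,1/2]\times[0,32]$ by products $\mathcal Q\times[u,u+C]$ of two kinds.

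\emph{Local cells.} The first kind consists of neighbourhoods of the reduced points representing $\F$ and $\B$, namely $Y_F,Y_B$ from \eqref{eq:global-reference-grams} together with their finitely many other reduced representatives in the rectangle. On each such cell I write $Y(\xi)=g_0^{T}Ag_0$, where $g_0$ is a basis of the reference. I then verify $\|A-\Id\|_{\op}\le7/500$ on the whole cell, using the corner interpolation of Lemma~\ref{lem:matrix-convexity} applied to the entries of $Y$ and $Y^{-1}$. This makes Corollary~\ref{cor:profile-op-embedding} and Theorem~\ref{thm:profile-local} applicable, which give a strict increase except at the reference itself. One restriction applies here: the BCC estimate \eqref{eq:profile-B-local} is only available for $c\le1/4$. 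For $c>1/4$ the neighbourhood of $\B$ must instead be covered by non-local cells with reference $R=\F$. This should be possible because near the transition FCC is already lower, or at least the margin is positive.

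\emph{Non-local cells.} The second kind covers the complement. On each cell I choose $R\in\{\F,\B\}$ and apply Lemma~\ref{lem:screening-profile}, with coefficients and remainders from Lemma~\ref{lem:global-profile-numerics}, the tail $\tau$ from \eqref{eq:profile-reference-tail}, and competitor sets selected by the rule $\pi q<60$. I then check the margin \eqref{eq:numerical-profile-margin}, which gives the strict inequality $P(L,c)>P(R,c)\ge\min$. The cells are produced by adaptive bisection in all six variables, stopping when the Bernstein margin is positive. All evaluations of $\erfc$, the exponential, and the moments are carried out in interval arithmetic. Together with Lemma~\ref{lem:local-taylor-profile} for the local pieces, this yields a certified finite list of inequalities.

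\emph{Main obstacle.} The delicate region is near the transition, $c\approx\kappa_*^2/(4\pi)\approx0.235$, close to both $\F$ and $\B$. There the energy gap between the two references is tiny, and the corner/tangent bound of Lemma~\ref{lem:screening-profile} loses second-order terms. The boundaries of the non-local cells must therefore fit exactly inside the local radii, which are fixed by $\|A-\Id\|_{\op}\le7/500$. I expect the hardest step to be matching the local radius around $\B$ for $c$ slightly above $1/4$ with non-local cells that use the FCC reference. My first attempt would be to use small shape cells around $Y_B$ with short screening intervals, degree $N$ around 10, and reference $\F$. If that fails, I would extend the BCC Taylor certification of Lemma~\ref{lem:local-taylor-profile} to $c$ up to about $0.3$. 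The equality case then follows because every non-local cell gives a strict inequality and the local cells give strictness off the references.
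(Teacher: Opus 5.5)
Your plan is essentially the paper's proof. After cusp exclusion and compact reduction, the paper covers the reduced rectangle, separately for each screening interval, by cells of two kinds: cells on which the Bernstein margin \eqref{eq:numerical-profile-margin} holds for some $R\in\{\F,\B\}$, and local neighbourhoods of $Y_F$ (for all $c$) or of $Y_B$ (only for $c\le1/4$, with $\F$ used as the reference near $\B$ beyond that) handled by Theorem~\ref{thm:profile-local}.

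The differences are mainly economies:
\begin{itemize}
\item The paper also discards cells on which one reduction inequality fails at every corner, or on which $\max_z d_1(z)\le16/25$. Discarding these is cheaper than certifying them.
\item It uses the larger FCC radius $3/100$ for $c\ge12$.
\item It certifies local cells by the Loewner corner test \eqref{eq:local-corner-certificate}, that is, by quadratic forms of $Y$ and $Y^{-1}$ rather than by their entries.
\end{itemize}
The last point is the precise form your corner-interpolation step needs: corner values give only upper bounds, so they do not control off-diagonal entries from both sides.
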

\begin{proof}
Lemma~\ref{lem:endpoint-cusp} treats the cusp. Every remaining reduced
representative has logarithmic coordinates in $\mathcal D$. Fix $c$ and choose an interval
$J_j$ containing it and a rectangle $\mathcal Q$ containing the reduced
coordinates. Apply Lemma~\ref{lem:rectangular-numerical-estimates}.

Each right-hand side of \eqref{eq:reduction-inequalities} is separately
convex. Thus alternative (i), which concerns one fixed right-hand side
at every corner, contradicts that reduction inequality throughout
$\mathcal Q$. Alternative (ii) lies in the excluded cusp, since
$d_1=\lambda_1(L)^2$ for a reduced representative. Alternative (iii)
gives a strict comparison with $R$ by
Lemma~\ref{lem:screening-profile}, and hence with the smaller reference
energy.

In alternative (iv), separate matrix convexity extends
\eqref{eq:local-corner-certificate} to the whole rectangle. Choose
$g_R$ with $g_R^Tg_R=Y_R$ and put $A=g_R^{-T}Yg_R^{-1}$. The basis
$A^{1/2}g_R$ has Gram matrix $Y$, so $A^{1/2}R$ is
orthogonally equivalent to $L$. Since $A^{-1}=g_RY^{-1}g_R^T$, congruence
transforms the direct and inverse matrix bounds into
\[
 A\prec(1+\delta)\Id,\qquad
 A^{-1}\prec(1-\delta)^{-1}\Id.
\]
Inverting the second inequality yields $(1-\delta)\Id\prec A$.
Thus every eigenvalue $\lambda$ of $A$ satisfies
$|\log\lambda|\le|\lambda-1|/(1-\delta)$. Since
$\det A=\det Y/\det Y_R=1$, we obtain
\[
 \det A=1,\qquad \|A-\Id\|_{\op}<\delta,\qquad
 \|\log A\|_{\fro}\le\frac{\sqrt3\,\delta}{1-\delta}.
\]
If $\delta\le7/500$, the last bound is less than $1/40$, since
$147/243049<1/1600$. Theorem~\ref{thm:profile-local} applies.
All BCC comparisons are restricted to $0\le c\le1/4$.
For $j=8,9,10$, $c\ge12$ implies $\kappa a>12$,
so the FCC operator radius $3/100$ follows from the same theorem.
Every lattice in $\mathcal Q$ different from its local reference has
strictly greater energy than that reference. These alternatives prove
the comparison and its equality statement.
\end{proof}

We turn to the unbounded screening range. Normalize the physical energy
by
\[
 \Phi_\kappa(L)=2ae^{\kappa a}E_\kappa(L)
 =\sum_{v\in L\setminus\{0\}}\frac a{|v|}e^{\kappa(a-|v|)},
 \qquad a=2^{1/6}.
\]
For a finite set $\mathcal M\subset\Z^3\setminus\{0\}$ and a positive
definite Gram matrix $Y$, put
\[
 V_\kappa(Y)=\sum_{m\in\mathcal M}
 \frac a{\sqrt{Y[m]}}e^{\kappa(a-\sqrt{Y[m]})}.
\]

\begin{lemma}[Continuation along a screening half-line]
\label{lem:halfline-principle}
Let $K>0$. If
\begin{equation}\label{eq:high-halfline-test}
 V_K(Y)>\Phi_K(\F),\qquad
 \left.\partial_\kappa V_\kappa(Y)\right|_{\kappa=K}\ge0,
\end{equation}
then $E_\kappa(L)>E_\kappa(\F)$ for every $\kappa\ge K$, where $Y$ is
a Gram matrix of $L$.
\end{lemma}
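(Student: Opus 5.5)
The plan is to compare the two normalized energies term by term in $\kappa$ on $[K,\infty)$, using that $V_\kappa$ is a partial sum of $\Phi_\kappa(L)$. Positivity of every summand gives $\Phi_\kappa(L)\ge V_\kappa(Y)$ for every $\kappa$, with $\mathcal M$ regarded as a set of coefficient vectors in the basis whose Gram matrix is $Y$. It therefore suffices to prove
\[
 g(\kappa)=V_\kappa(Y)-\Phi_\kappa(\F)>0\qquad(\kappa\ge K).
\]
Since $\Phi_\kappa=2ae^{\kappa a}E_\kappa$ with a positive common factor, this gives $E_\kappa(L)>E_\kappa(\F)$.

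For the reference term, write $\Phi_\kappa(\F)=\sum_{n\ge1}N_n n^{-1/2}e^{\kappa a(1-\sqrt n)}$, since FCC shells have radii $a\sqrt n$. The $n=1$ shell contributes the constant $12$, and every other shell has a negative exponent. Hence $\kappa\mapsto\Phi_\kappa(\F)$ is nonincreasing, and termwise differentiation is justified by normal convergence on $[K,\infty)$. It is also convex in $\kappa$.

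For the competitor, each summand has the form $\frac{a}{r}e^{\kappa(a-r)}$ with $r=\sqrt{Y[m]}>0$. This is convex in $\kappa$, so $V_\kappa(Y)$ is a convex function of $\kappa$. Convexity gives
\[
 V_\kappa(Y)\ge V_K(Y)+(\kappa-K)\,\partial_\kappa V_\kappa(Y)|_{\kappa=K}\ge V_K(Y)\qquad(\kappa\ge K),
\]
where the last step uses the hypothesis that the derivative at $K$ is nonnegative. Combining this with the monotonicity of the reference gives
\[
 V_\kappa(Y)\ge V_K(Y)>\Phi_K(\F)\ge\Phi_\kappa(\F)\qquad(\kappa\ge K).
\]
This proves the strict inequality on the whole half-line.

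The only delicate points are bookkeeping. First, $\Phi_\kappa(L)$ sums over all of $L\setminus\{0\}$, and $\mathcal M$ should be a finite subset of $\Z^3\setminus\{0\}$ with the full $\pm$ pairs, so that $V_\kappa\le\Phi_\kappa(L)$. Second, the monotonicity of $\Phi_\kappa(\F)$ requires the FCC minimal distance to equal $a$ exactly, so that no summand has a positive exponent; this holds by the definition $a=\lambda_1(\F)$. I expect no real obstacle. The main step is recognizing that the normalization by $e^{\kappa a}$ makes the reference nonincreasing while keeping every competitor term convex.
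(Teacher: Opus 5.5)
Your proof is correct and follows the paper's argument: you use positivity of the omitted terms to get $\Phi_\kappa(L)\ge V_\kappa(Y)$, convexity of each summand in $\kappa$ plus the nonnegative derivative at $K$ to make $V_\kappa(Y)$ nondecreasing, and $\lambda_1(\F)=a$ to make $\Phi_\kappa(\F)$ nonincreasing, which gives the same chain $\Phi_\kappa(L)\ge V_\kappa(Y)\ge V_K(Y)>\Phi_K(\F)\ge\Phi_\kappa(\F)$. One small point: requiring $\mathcal M$ to contain full $\pm$ pairs is unnecessary, because $\Phi_\kappa(L)$ is the full sum over $L\setminus\{0\}$ with no factor $1/2$, so any finite $\mathcal M$ gives $V_\kappa\le\Phi_\kappa(L)$.
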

\begin{proof}
Each summand of $V_\kappa(Y)$ is convex in $\kappa$, so the second
inequality makes $V_\kappa(Y)$ nondecreasing for $\kappa\ge K$.
Every nonzero FCC vector has length at least $a$, and therefore
$\Phi_\kappa(\F)$ is nonincreasing. Positivity of the omitted terms
now gives
\[
 \Phi_\kappa(L)\ge V_\kappa(Y)\ge V_K(Y)
 >\Phi_K(\F)\ge\Phi_\kappa(\F).
\]
Division by $2ae^{\kappa a}>0$ proves the claim.
\end{proof}

We next derive the uniform bounds used to apply this principle. Fix
$K=20$ and a shape rectangle $\mathcal Q$. For each nonzero integer
vector $m$, write $q_m(\xi)=Y(\xi)[m]$. In the coordinates of
\eqref{eq:reduced-iwasawa},
\[
 q_m=d_1(m_1+x_1m_2+x_2m_3)^2
       +d_2(m_2+x_3m_3)^2+d_3m_3^2.
\]
Let $[x_i]$ be the shear intervals of $\mathcal Q$, and put
$d_i^-=\min_{z\in\mathcal C(\mathcal Q)}d_i(z)$. Then
\begin{equation}\label{eq:high-length-interval}
 \begin{gathered}
 \underline q_m=
 d_1^-\min_{\substack{x_1\in[x_1]\\x_2\in[x_2]}}
       (m_1+x_1m_2+x_2m_3)^2
 +d_2^-\min_{x_3\in[x_3]}(m_2+x_3m_3)^2+d_3^-m_3^2,\\
 0<\underline q_m\le q_m(\xi)\le
 \overline q_m:=\max_{z\in\mathcal C(\mathcal Q)}q_m(z).
 \end{gathered}
\end{equation}
The lower bound follows term by term and is positive because the last
nonzero coordinate of $m$ contributes a positive constant square.
The extrema of the $d_i$ occur at corners. The upper bound follows
from separate convexity, which also gives
\begin{equation}\label{eq:high-corner-trace}
 \tr(SY(\xi))\le\max_{z\in\mathcal C(\mathcal Q)}\tr(SY_z)
 \qquad(S\succeq0,\ \xi\in\mathcal Q).
\end{equation}

Set
\[
 f(q)=\frac a{\sqrt q}e^{K(a-\sqrt q)},\qquad
 h(q)=(a-\sqrt q)\frac a{\sqrt q}e^{K(a-\sqrt q)}.
\]
A direct differentiation gives
\[
 -f'(q)=\frac{f(q)(K\sqrt q+1)}{2q}>0,\qquad
 f''(q)=\frac{f(q)}{4q^2}(K^2q+3K\sqrt q+3)>0.
\]
At the midpoint $\xi_0$ of $\mathcal Q$, put $Q=Y(\xi_0)$ and
$S_f=\sum_{m\in\mathcal M}(-f'(Q[m]))mm^T\succeq0$.
The tangent inequality and \eqref{eq:high-corner-trace} yield
\begin{equation}\label{eq:high-value-tangent}
 V_K(Y(\xi))\ge
 \sum_{m\in\mathcal M}f(Q[m])+\tr(S_fQ)
 -\max_{z\in\mathcal C(\mathcal Q)}\tr(S_fY_z).
\end{equation}

There are three useful lower bounds for the screening derivative.
First, put
\[
 q_*=\frac14\bigl(a+\sqrt{a^2+4a/K}\bigr)^2.
\]
Since
\[
 \frac d{dr}\left((a-r)\frac ar e^{K(a-r)}\right)
 =\frac{ae^{K(a-r)}}{r^2}(Kr^2-Kar-a),
\]
$h$ decreases up to $q_*$ and increases thereafter. Thus
\begin{equation}\label{eq:high-derivative-interval}
 \left.\partial_\kappa V_\kappa(Y(\xi))\right|_{\kappa=K}
 \ge\sum_{m\in\mathcal M}\min_{q\in[\underline q_m,\overline q_m]}h(q).
\end{equation}
Each minimum is attained at an endpoint or at $q_*$ when the interval
contains it. Second, if $\overline q_m<a^2$ for every retained vector,
then every derivative term is positive.

Third, suppose $\overline q_m<q_*$ for every retained vector.
On $(0,q_*)$, $h$ is decreasing and convex. Indeed,
\[
 -h'(q)=\frac{f(q)(aK\sqrt q+a-Kq)}{2q}>0,
\]
and, writing $r=\sqrt q$,
\[
 \frac{4q^2h''(q)}{f(q)}
 =(a-r)(K^2r^2+3Kr+3)+2Kr^2+3r.
\]
This is positive for $r\le a$. For $a\le r\le\sqrt{q_*}$,
$K\sqrt{q_*}(\sqrt{q_*}-a)=a$ gives $r-a\le a/(Kr)$, and hence
\[
 \frac{4q^2h''(q)}{f(q)}
 \ge Kr(2r-a)+3(r-a)-\frac{3a}{Kr}
 \ge Ka^2-\frac3K>0.
\]
Consequently, with $S_h=\sum_m(-h'(Q[m]))mm^T\succeq0$,
\begin{equation}\label{eq:high-derivative-tangent}
 \left.\partial_\kappa V_\kappa(Y(\xi))\right|_{\kappa=K}
 \ge\sum_{m\in\mathcal M}h(Q[m])+\tr(S_hQ)
 -\max_{z\in\mathcal C(\mathcal Q)}\tr(S_hY_z).
\end{equation}
In all three cases, the value and derivative bounds concern the same
finite sum $V_\kappa$, with the same fixed set $\mathcal M$. No sign
is imposed on derivatives of omitted terms: positivity is used only
to compare the full energy with $V_\kappa$.

It remains to bound the full FCC reference in the value comparison.
Its shell expansion is
\[
 \Phi_K(\F)=\sum_{n\ge1}\frac{N_n}{\sqrt n}
                  e^{-Ka(\sqrt n-1)}.
\]
All shells through $n=64$ are enumerated in $[-12,12]^3$.
The bound $N_n<30n$ and monotonicity of
$\sqrt x\,e^{-Ka(\sqrt x-1)}$ on $[64,\infty)$ imply
\begin{equation}\label{eq:high-reference-upper}
 \Phi_K(\F)\le
 \sum_{n=1}^{64}\frac{N_n}{\sqrt n}e^{-Ka(\sqrt n-1)}
 +60e^{-7Ka}\left(\frac{64}{Ka}+\frac{16}{(Ka)^2}+\frac2{(Ka)^3}\right).
\end{equation}
The final term is the explicit integral
$30\int_{64}^\infty\sqrt x\,e^{-Ka(\sqrt x-1)}\,dx$;
it is retained in every comparison.

\begin{theorem}\label{thm:high-halfline}
For every $\kappa\ge20$, the FCC lattice $\F$ uniquely minimizes
$E_\kappa(L)$ under $|L|=1$, up to orthogonal transformations.
\end{theorem}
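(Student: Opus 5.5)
The plan is to combine the compact reduction with the half-line continuation principle, Lemma~\ref{lem:halfline-principle}, at $K=20$, and to treat a fixed neighbourhood of $\F$ separately by the high-screening local estimate \eqref{eq:profile-high-local}.

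First, by Lemma~\ref{lem:endpoint-cusp} and Proposition~\ref{prop:compact-reduction}, applied with $c=\kappa^2/(4\pi)$, every lattice with $E_\kappa(L)\le E_\kappa(\F)$ has a reduced representative whose coordinates lie in the fixed rectangle \eqref{eq:endpoint-rational-root}. This holds uniformly in $\kappa$, since $P$ and $E_\kappa$ have the same differences. I will cover this rectangle by finitely many subrectangles $\mathcal Q$, with the covering fixed once and independent of $\kappa$.

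On each subrectangle, exactly one of three alternatives is to be verified.

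(i) Some reduction inequality in \eqref{eq:reduction-inequalities} fails at every corner. Separate convexity of the right-hand sides then shows that $\mathcal Q$ contains no reduced representative, so it can be discarded.

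(ii) The rectangle lies in a fixed neighbourhood of the reduced FCC Gram matrix $Y_F$ from \eqref{eq:global-reference-grams}. Here I impose the corner conditions $Y_z\preceq(1+\delta)Y_F$ and $Y_z^{-1}\preceq(1-\delta)^{-1}Y_F^{-1}$ with $\delta=3/100$. Lemma~\ref{lem:matrix-convexity} extends these to all of $\mathcal Q$. The congruence argument used in the proof of Lemma~\ref{lem:finite-cover} then gives $A$ with $\det A=1$, $\|A-\Id\|_{\op}<3/100$, and $L$ orthogonally equivalent to $A^{1/2}\F$. Since $\kappa\ge20$ gives $s=\kappa a>12$, inequality \eqref{eq:profile-high-local} yields $E_\kappa(L)>E_\kappa(\F)$ unless $A=\Id$.

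(iii) The rectangle is away from FCC. Choose a finite set $\mathcal M$ of integer vectors; the vectors with $\overline q_m$ below a modest cutoff suffice. Verify two things:
\begin{itemize}
\item the value test $V_{20}(Y(\xi))>\Phi_{20}(\F)$ on $\mathcal Q$, using the tangent lower bound \eqref{eq:high-value-tangent} for $V_{20}$ and the upper bound \eqref{eq:high-reference-upper} for $\Phi_{20}(\F)$;
\item the derivative test $\partial_\kappa V_\kappa\ge0$ at $\kappa=20$, using whichever of the three bounds \eqref{eq:high-derivative-interval}, the positivity criterion $\overline q_m<a^2$, or \eqref{eq:high-derivative-tangent} succeeds on that rectangle.
\end{itemize}
Lemma~\ref{lem:halfline-principle} then gives $E_\kappa(L)>E_\kappa(\F)$ for every $\kappa\ge20$ at once.

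Combining the three alternatives shows that every $L$ not orthogonally equivalent to $\F$ satisfies $E_\kappa(L)>E_\kappa(\F)$. Existence of a minimizer comes from Theorem~\ref{thm:representation}, so $\F$ is the unique minimizer.

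The main obstacle is the transition zone between (ii) and (iii), on rectangles that are close to FCC but outside the $3/100$ operator neighbourhood. There the value gap at $K=20$ is small, because $\Phi_{20}$ is dominated by twelve nearly contact vectors. Also, the derivative of $V$ involves terms $h(q)$ with $q$ slightly above $a^2$, which are negative. For these rectangles I would first try the convex tangent bound \eqref{eq:high-derivative-tangent} with a centre-based tangent, since that preserves cancellation. If needed, I would subdivide more finely, and if necessary enlarge the local neighbourhood by using the ordinary range of \eqref{eq:profile-high-local}.

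All the finite inequalities required are rational or interval evaluations of finitely many exponentials. They would be collected as a numerical lemma in the appendix, in the same way Lemma~\ref{lem:finite-cover} relies on Lemma~\ref{lem:rectangular-numerical-estimates}.
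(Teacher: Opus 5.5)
Your proposal is correct and follows essentially the paper's own proof. Both arguments use cusp exclusion and the compact reduction, then a fixed finite cover (the paper's Lemma~\ref{lem:global-halfline-numerics}) in which every rectangle either violates a reduction inequality, lies in the $\delta=3/100$ FCC neighbourhood where \eqref{eq:profile-high-local} applies because $\kappa a\ge 20a>12$, or passes the value and derivative tests of Lemma~\ref{lem:halfline-principle} at $K=20$. The only minor difference is that the paper also discards rectangles with $\max_z d_1(z)\le 16/25$ directly through the cusp lemma; you could add this as a further alternative, since you already invoke that lemma, rather than having to push those rectangles through the value and derivative tests.
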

\begin{proof}
Lemma~\ref{lem:endpoint-cusp} gives the strict comparison when
$\lambda_1(L)\le4/5$. Otherwise the reduced coordinates lie in
\eqref{eq:endpoint-rational-root}. The finite inequalities of
Lemma~\ref{lem:global-halfline-numerics} cover that rectangle. A rectangle
which violates a reduction inequality contains no relevant representative.
On every remaining rectangle outside the FCC local neighbourhood,
\eqref{eq:high-halfline-test} holds at $K=20$, and
Lemma~\ref{lem:halfline-principle} gives the strict comparison for the
entire half-line. On the local rectangles the strict estimate of
Theorem~\ref{thm:profile-local} applies, since
$\kappa a\ge20a>12$. Equality is therefore possible only when $L$
is orthogonally equivalent to $\F$.
\end{proof}

\begin{proof}[Proof of Theorem~\ref{thm:global-comparison}]
For $0\le c\le32$, the assertion, including its equality statement,
is Lemma~\ref{lem:finite-cover}. If $c>32$, then
$\kappa=\sqrt{4\pi c}>20$, because $128\pi>400$.
Theorem~\ref{thm:high-halfline} and \eqref{eq:P-difference} show that
$P(\F,c)$ is the minimum and that equality forces $L$ to be
orthogonally equivalent to $\F$.
These two ranges exhaust $c\ge0$.
\end{proof}

\section{The transition and the main theorems}\label{sec:pair}
We now compare the two cubic references and locate the transition.
Throughout this section put $D(t)=\Theta(t,\B)-\Theta(t,\F)$.
We prove the scalar theta ordering below, using the finite estimates
in Lemma~\ref{lem:finite}.

\begin{theorem}[FCC--BCC classification]\label{thm:pair}
There is a unique $\kappa_*\in(0,\sqrt\pi)$ such that
\begin{equation}\label{eq:pairargmin}
 \argmin_{L\in\{\F,\B\}}E_\kappa(L)=
 \begin{cases}
 \{\B\},&0<\kappa<\kappa_*,\\
 \{\F,\B\},&\kappa=\kappa_*,\\
 \{\F\},&\kappa>\kappa_*.
 \end{cases}
\end{equation}
The threshold is the unique positive root of~\eqref{eq:root}, with
the enclosure~\eqref{eq:certified-crossing}; in particular,
\mbox{$\kappa_*\approx1.71792186846990$}.
\end{theorem}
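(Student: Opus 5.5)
We use Proposition~\ref{prop:dual} with $L=\B$ and $R=\F$. Since $\B^*=\F$ and $\F^*=\B$, the dual theta difference is $-D(t)$. Writing $c=\kappa^2/(4\pi)$ and
\[
 w(t,c)=t^{-1/2}e^{-c/t}-e^{-ct},
\]
we obtain
\[
 G(c):=2\bigl(E_\kappa(\B)-E_\kappa(\F)\bigr)=\int_1^\infty D(t)\,w(t,c)\,dt .
\]
This is exactly the left side of \eqref{eq:root}. The classification \eqref{eq:pairargmin} is therefore equivalent to a single sign change of $G$ on $(0,\infty)$: $G<0$ below the root $c_*$ and $G>0$ above it, with $\kappa_*=\sqrt{4\pi c_*}$.

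\textbf{Step 1: the theta ordering.} The first ingredient is the scalar ordering $D(t)>0$ for $t>1$, i.e. $\Theta(t,\B)>\Theta(t,\F)$. We would prove it by an explicit shell expansion of both thetas on a compact interval $[1,T]$, using the finite estimates of Lemma~\ref{lem:finite}. For large $t$ the first shells decide: BCC has $8$ vectors of squared length $3\cdot2^{-4/3}<2^{1/3}$, while FCC has $12$ vectors at $2^{1/3}$. Near $t=1$, Poisson duality makes $D(1)=0$ and forces a sign-definite expansion in $t-1$, whose leading coefficient we would check explicitly.

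\textbf{Step 2: sign of $G$ at the ends.} The weight $w(\cdot,c)$ vanishes on $(1,\infty)$ exactly where $c=\varphi(t)$, with
\[
 \varphi(t)=\frac{t\log t}{2(t^2-1)} .
\]
The function $\varphi$ decreases from $1/4$ at $t=1^+$ to $0$ at infinity.
\begin{itemize}
\item For $c\ge1/4$ the weight is positive on $(1,\infty)$, so $G(c)>0$. This gives FCC strictly and places the root below $1/4$, i.e. $\kappa_*<\sqrt\pi$.
\item At $c=0$ the weight is $t^{-1/2}-1<0$, so $G(0)<0$. Through \eqref{eq:P-difference} this is the Coulomb endpoint.
\end{itemize}
By continuity in $c$, there is at least one root in $(0,1/4)$.

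\textbf{Step 3: uniqueness of the root.} This is the main obstacle. For $0<c<1/4$ the weight is negative on $(1,t_c)$ and positive on $(t_c,\infty)$, where $t_c=\varphi^{-1}(c)$ increases as $c\downarrow0$. The positive region grows with $c$, which makes a single crossing plausible but does not prove it. The first thing we would try is a one-sided derivative inequality at zeros: show that $G(c)=0$ implies $G'(c)>0$. For this, use
\[
 \partial_cw=-t^{-3/2}e^{-c/t}+te^{-ct}
\]
and test against $\partial_cw+\lambda w$ for a suitable constant $\lambda=\lambda(c)$ (for instance $\lambda=t_c$), aiming at a pointwise weighted inequality in which $D>0$ gives the sign. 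Since $G(c)=0$, adding $\lambda G$ costs nothing. If no single $\lambda$ makes the pointwise inequality hold on all of $(1,\infty)$, we would instead bound $G'$ below by interval arithmetic on a neighbourhood of the numerical root, where $G'$ has a definite sign. On the rest of $[0,1/4]$ we would certify the sign of $G$ directly, using the same truncated theta sums and tail bounds as in Lemma~\ref{lem:finite}.

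\textbf{Step 4: the numerical value.} The same rigorous evaluation of $G$ at two rational endpoints, exhibiting a sign change, yields the enclosure \eqref{eq:certified-crossing} and hence $\kappa_*\approx1.71792186846990$.
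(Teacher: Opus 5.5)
Your Steps 1, 2 and 4 match the paper: the theta ordering of Lemma~\ref{lem:thetaorder}, the sign of the weight at $c=0$ and at $c=1/4$ via $t-1/t>2\log t$, and the rational enclosure of Lemma~\ref{lem:crossing-enclosure}. The gap is Step 3, which as written does not prove uniqueness.

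Your multiplier $\lambda=t_c$ does not give a pointwise inequality at every potential zero. Near $t=1^+$,
\[
 \partial_cw+\lambda w\approx e^{-c}\bigl[2-(\lambda-1)(\tfrac12-2c)\bigr](t-1).
\]
Since $t_c\to\infty$ as $c\downarrow0$, this is negative just to the right of $t=1$ for moderate $c$. For example, at $c=1/10$ one has $t_c\approx12.8$. You would then need the fallback: certify the sign of $G$ numerically on much of $(0,1/4)$ and of $G'$ near the root. That is feasible in principle, but it is not a proof as stated.

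The missing observation is that the constant $\lambda=1$ works for every $c\ge0$:
\[
 \partial_cw+w=(t-1)\bigl(t^{-3/2}e^{-c/t}+e^{-ct}\bigr)>0\qquad(t>1).
\]
Therefore
\[
 \frac{d}{dc}\bigl(e^cG(c)\bigr)=e^c\int_1^\infty D(t)(t-1)\bigl(t^{-3/2}e^{-c/t}+e^{-ct}\bigr)\,dt>0,
\]
using $D>0$ on $(1,\infty)$; exponential decay of $D$ justifies differentiating under the integral. So $e^cG$ is strictly increasing on $[0,\infty)$. Combined with $G(0)<0<G(1/4)$, this gives exactly one zero and the sign pattern in \eqref{eq:pairargmin}, with no numerics beyond the final enclosure.

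This is precisely the paper's argument, phrased differently. The paper substitutes $u=1/t$ in the unsplit Gaussian integral and uses $D(1/u)=-u^{3/2}D(u)$ to get $G(c)=-\int_0^\infty e^{-cu}D(u)\,du$. Then $(e^cG)'=\int_0^\infty(u-1)e^{c(1-u)}D(u)\,du$, which is positive because $u-1$ and $D(u)$ have the same sign on both sides of $u=1$. Splitting this integral at $u=1$ gives back your $\lambda=1$ identity.

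A smaller point on Step 1: duality gives $D(1)=0$ but imposes nothing on $D'(1)$; differentiating $D(1/t)=-t^{3/2}D(t)$ at $t=1$ yields only an identity. So you need an explicit lower bound for $D'$ on an interval $[1,1+\epsilon]$, as in \eqref{eq:finite-derivative}, not just the sign of a leading Taylor coefficient.
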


The proof uses the scalar FCC--BCC theta ordering and a strictly
increasing transform of their Yukawa energy difference.

\begin{lemma}\label{lem:thetaorder}
The function $D(t)=\Theta(t,\B)-\Theta(t,\F)$ satisfies
\begin{equation}\label{eq:thetaorder}
 D(1)=0,\qquad D(t)>0\quad(t>1).
\end{equation}
\end{lemma}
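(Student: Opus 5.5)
The plan is to use Poisson duality for $D(1)=0$, then treat a bounded range of $t$ by explicit shell sums with rigorous tails, and the large-$t$ range by an asymptotic argument. Since $\B=\F^*$ and both lattices have covolume one, Poisson summation gives $\Theta(t,\B)=t^{-3/2}\Theta(1/t,\F)$. At $t=1$ this reads $\Theta(1,\B)=\Theta(1,\F)$, so $D(1)=0$ at once. More generally it yields the functional equation
\[
 D(t)=t^{-3/2}\Theta(1/t,\F)-\Theta(t,\F),
\]
so $D$ is determined by the single function $\Theta(\cdot,\F)$.

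For positivity on $t>1$ I will split into a large-$t$ range and a compact range. For large $t$ the shortest vectors decide the sign. BCC has $8$ vectors of squared length $\lambda_1(\B)^2=3\cdot2^{-4/3}\approx1.19$, while FCC has $12$ vectors of squared length $a^2=2^{1/3}\approx1.26$. Hence
\[
 D(t)\ge 8e^{-3\pi2^{-4/3}t}-\sum_{n\ge1}N_ne^{-\pi2^{1/3}nt}
\]
(dropping positive BCC shells beyond the first). The exponent gap $\pi(2^{1/3}-3\cdot2^{-4/3})>0$ makes the right side positive once $t\ge t_0$, for an explicit $t_0$. I will bound the FCC tail using $N_n<30n$ from Proposition~\ref{prop:tail} and a geometric series, exactly as in the tail estimates of Section~\ref{sec:certified-local}.

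On $(1,t_0]$ I will use finitely many shells of each lattice. The FCC squared lengths are $2^{1/3}n$ with counts $N_n$, and the BCC squared lengths are $2^{-4/3}\ell$ with counts $M_\ell$; the tails are again bounded by $30n$ and $30\ell$. These are the certified finite estimates that Lemma~\ref{lem:finite} is meant to supply. The compact interval is then covered by subintervals, using monotonicity of each exponential to enclose $D$ from below.

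The main obstacle is the neighbourhood of $t=1$, where $D$ vanishes and interval enclosures lose the sign. There I will prove $D'(1)>0$. Differentiating the functional equation at $t=1$ gives
\[
 D'(1)=-\tfrac32\Theta(1,\F)-2\Theta'(1,\F),\qquad \Theta'(t,\F)=-\pi\sum_{v\in\F}|v|^2e^{-\pi t|v|^2}.
\]
So the condition is
\[
 2\pi\sum_{v\in\F}|v|^2e^{-\pi|v|^2}>\tfrac32\,\Theta(1,\F),
\]
a finite computation with a tail bound. I will then bound $D''$ uniformly on $[1,1+\epsilon]$ from the shell sums and use Taylor's theorem to obtain $D(t)\ge(t-1)\bigl(D'(1)-\tfrac12\epsilon\sup|D''|\bigr)>0$ there. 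This matches it to the interval cover starting at $1+\epsilon$. If $D'(1)$ turned out to be too small for a usable $\epsilon$, the fallback is to subtract the linear term and cover $D(t)/(t-1)$ on subintervals instead.
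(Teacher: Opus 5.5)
Your proposal is correct and follows essentially the paper's route. Both arguments use Poisson duality for $D(1)=0$, a positive-derivative argument near $t=1$, cellwise monotone enclosures of $D$ on a compact middle range, and domination of the whole FCC theta series by the first BCC shell for large $t$; the paper takes $t\ge 21/10$ with $\Theta(t,\F)-1<\tfrac{25}{2}e^{-\pi a^2t}$. The only difference is near $t=1$. Instead of your $D'(1)>0$ plus a $D''$ bound on a short interval (numerically $D'(1)\approx 5.6\cdot10^{-3}$, so your version goes through), the paper bounds $D'$ from below cell by cell on all of $[1,11/10]$, obtaining $D'>3/1000$. This hands the direct cover of $D$ off at a point where $D>3\cdot10^{-4}$, rather than where $D$ is only of order $\epsilon$.
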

\begin{proof}
The equality follows from Poisson summation and $\B=\F^*$.
For $t\ge1$, the
shell bounds $N_n<30n$, $M_\ell<30\ell$, together with
$e^{-\pi a^2}<1/49$, $e^{-\pi 2^{-4/3}}<3/10$ and $\pi 2^{-4/3}<5/4$, give
\[
 \begin{alignedat}{2}
 \sum_{n\ge13}N_ne^{-\pi a^2nt}
 &<30\sum_{n\ge13}\frac n{49^n}&\qquad&<10^{-18},\\
 \pi 2^{-4/3}\sum_{\ell\ge49}\ell M_\ell
       e^{-\pi 2^{-4/3}\ell t}
 &<\frac{75}{2}\sum_{\ell\ge49}\ell^2(3/10)^\ell&\qquad&<10^{-18}.
 \end{alignedat}
\]
These are the only negative omitted tails in the lower bounds for
$D$ and $D'$, respectively. On a cell $[u,v]\subset[1,11/10]$,
monotonicity of the exponentials gives
\[
 D'(t)\ge\pi a^2\sum_{n=1}^{12}nN_ne^{-\pi a^2nv}
 -\pi2^{-4/3}\sum_{\ell=1}^{48}\ell M_\ell e^{-\pi2^{-4/3}\ell u}
 -10^{-18}\qquad(u\le t\le v).
\]
Take $u=1+j/10000$ and $v=u+1/10000$ and apply
\eqref{eq:finite-derivative}. For $D$ itself, the positive BCC terms
are bounded at the right endpoint and the negative FCC terms at the
left endpoint; the first tail bound and \eqref{eq:finite-gap} apply
on the corresponding cells. We obtain
\[
 D'(t)>3/1000\quad(1\le t\le11/10),\qquad
 D(t)>1/10000\quad(11/10\le t\le21/10).
\]
For $t\ge21/10$, we have $e^{-\pi a^2t}<1/128$, so
Proposition~\ref{prop:tail} gives
\[
 \Theta(t,\F)-1
 <e^{-\pi a^2t}
 \left[12+30\left(\frac1{(1-e^{-\pi a^2t})^2}-1\right)\right]
 <\frac{25}{2}e^{-\pi a^2t}.
\]
The first BCC shell gives
$\Theta(t,\B)-1\ge8e^{-3\pi 2^{-4/3}t}$.
The last inequality of Lemma~\ref{lem:finite} states
$8e^{(\pi a^2-3\pi 2^{-4/3})21/10}>25/2$.
As $a^2>3\cdot2^{-4/3}$, this inequality remains strict for every $t\ge21/10$.
Thus $D(t)>0$ also in this range.
\end{proof}

\begin{proof}[Proof of Theorem~\ref{thm:pair}]
Since $\B=\F^*$, Poisson summation gives
$D(1/t)=-t^{3/2}D(t)$ and $D(1)=0$.
Writing $c=\kappa^2/(4\pi)$, Proposition~\ref{prop:dual} gives
\begin{equation}\label{eq:pairintegral}
 2\bigl(E_\kappa(\B)-E_\kappa(\F)\bigr)
 =\int_1^\infty D(t)
                  \bigl[t^{-1/2}e^{-c/t}-e^{-ct}\bigr]dt.
\end{equation}
This expression has a continuous limit at $c=0$, where it is strictly
negative by \eqref{eq:thetaorder}. At $c=1/4$ the bracket is strictly
positive for every $t>1$: indeed,
\[
 t-\frac1t>2\log t\qquad(t>1),
\]
as follows by differentiating the difference. Thus a crossing exists
with $0<\kappa_*<\sqrt\pi$.

For uniqueness, substitute $u=1/t$ in the unsplit Gaussian expression
and use $D(1/u)=-u^{3/2}D(u)$. This gives
\begin{align*}
 2\bigl(E_\kappa(\B)-E_\kappa(\F)\bigr)
 &=\int_0^\infty u^{-3/2}e^{-cu}D(1/u)\,du\\
 &=-\int_0^\infty e^{-cu}D(u)\,du.
\end{align*}
Both integrals converge absolutely.
Multiply by $e^c$ and differentiate with respect to $c$. The derivative is
\[
 \int_0^\infty (u-1)e^{c(1-u)}D(u)\,du>0.
\]
Both factors $u-1$ and $D(u)$ have the same sign on either side of
$1$. The exponential decay of $D$ at infinity and its Poisson decay
at zero justify differentiation, including at $c=0$.
The rescaled energy difference is therefore strictly increasing.
Together with the opposite signs at $c=0$ and $c=1/4$, this proves
that it has exactly one zero. Equations~\eqref{eq:pairargmin} and
\eqref{eq:root} follow, and Lemma~\ref{lem:crossing-enclosure} gives
the stated enclosure.
\end{proof}

\begin{proof}[Proof of Theorem~\ref{thm:main}]
Existence follows from Theorem~\ref{thm:representation}.
Theorem~\ref{thm:global-comparison} and~\eqref{eq:P-difference} give
\[
 E_\kappa(L)\ge\min\{E_\kappa(\F),E_\kappa(\B)\},
\]
and equality restricts the lattice class to the two references.
Theorem~\ref{thm:pair} determines which reference has smaller energy,
including the equality case at the unique transition.
Finally,~\eqref{eq:scaling} gives the statement at arbitrary density.
\end{proof}

\begin{proof}[Proof of Corollary~\ref{thm:coulomb-endpoint-certified}]
Proposition~\ref{thm:weak} gives the limit in~\eqref{eq:coulomb-consequence}:
$2E_\kappa(L)-4\pi/\kappa^2=2F_\kappa(L)+\kappa\to2W(L)=Z(L,1/2)$.
Apply Theorem~\ref{thm:global-comparison} at $c=0$.
The identity~\eqref{eq:pairintegral} at zero and
$D(t)>0$ for $t>1$ yield $W(\B)<W(\F)$.
Since $W(L)=P(L,0)-3/2$, BCC is the unique minimizing class.
The Epstein functional equation in~\eqref{eq:Coulomb}, with $M=L^*$,
gives the equivalent statement for the continued value $Z(M,1)$.
\end{proof}

\appendix

\section{Numerical comparisons}\label{app:numerics}\label{app:cusp-numerics}\label{app:local-numerics}\label{sec:finite-cover}\label{app:high-screening-certificate}\label{app:finite}\label{app:crossing}
We collect the finite numerical comparisons used in the preceding
proofs. Each lemma states inequalities on explicit parameter regions;
the accompanying interval computations include all indicated tails
and rounding errors.

\begin{lemma}[Scalar cusp comparison]\label{lem:cusp-scalar}
For the unit-covolume FCC lattice,
\[
 P(\F,0)<\frac{55995}{10^6}<\frac7{125}
       <I\left(\frac{16}{25},0\right).
\]
\end{lemma}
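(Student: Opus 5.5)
The statement splits into two independent numerical inequalities, since the middle comparison $55995/10^6<7/125=56000/10^6$ is an exact rational check. I would treat the upper bound for $P(\F,0)$ and the lower bound for $I(16/25,0)$ separately, each with explicit rational enclosures for the special functions involved.

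For the right-hand inequality I use the closed form $I(q,0)=H_0(q,0)=\erfc(\sqrt{\pi q})/\sqrt q$ from Section~\ref{sec:global-comparison}. With $q=16/25$ this is
\[
 I(16/25,0)=\tfrac54\erfc\bigl(\tfrac45\sqrt\pi\bigr),
\]
so it suffices to show $\erfc(\tfrac45\sqrt\pi)>28/625=0.0448$. Numerically $\tfrac45\sqrt\pi\approx1.418$ and $\erfc(1.418)\approx0.0449$, so the margin is small, about $10^{-4}$. I would therefore use a rigorous lower bound, either a continued-fraction lower bound for $e^{x^2}\erfc(x)$ or the convergent Taylor series of $\operatorname{erf}$ truncated with an alternating-series error bound. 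All of this runs in interval arithmetic with an enclosure of $\pi$.

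For the left-hand inequality I expand
\[
 P(\F,0)=\tfrac12\sum_{v\in\F\setminus\{0\}}I(|v|^2,0)+\tfrac12\sum_{w\in\B\setminus\{0\}}J(|w|^2,0),
\]
using $\F^*=\B$, the shell structure from Proposition~\ref{prop:tail}, and the BCC parametrization $M_\ell$.
\begin{itemize}
\item \emph{FCC shells.} Squared lengths are $2^{1/3}n$ with counts $N_n$, and each term equals $\erfc(\sqrt{\pi2^{1/3}n})/\sqrt{2^{1/3}n}$.
\item \emph{BCC shells.} Squared lengths are $2^{-4/3}\ell$ with counts $M_\ell$, and $J(q,0)=e^{-\pi q}/(\pi q)$.
\end{itemize}
I would enumerate finitely many shells exactly, say FCC $n\le12$ and BCC $\ell\le48$ as in Lemma~\ref{lem:thetaorder}. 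Each retained term is bounded above in interval arithmetic.

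The omitted tails are controlled by $N_n,M_\ell<30\ell$ together with $I(q,0),J(q,0)\le e^{-\pi q}/(\pi q)$, followed by geometric summation as in \eqref{eq:profile-reference-tail}. These tails are far below $10^{-10}$. The dominant contributions are the first BCC shell (8 vectors at $q=3\cdot2^{-4/3}$) and the first FCC shell (12 vectors). The sum should land near $0.05599$, leaving a margin of about $5\cdot10^{-6}$ below $0.055995$.

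The main obstacle is precision rather than method. Both margins are tight relative to the quantities involved, so every erfc and exponential enclosure must carry roughly eight or more correct digits. This is why I would rely on validated interval evaluation, or on explicit high-order Taylor bounds with rational remainders, rather than on a crude analytic estimate.
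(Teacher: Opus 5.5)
Your proposal is correct and follows essentially the paper's route. Like the paper, you split $P(\F,0)$ into the direct FCC $I$-sum and the dual BCC $J$-sum, evaluate finitely many shells by outward interval arithmetic, bound the remaining shells via $I,J\le e^{-\pi q}/(\pi q)$, $N_n<30n$, $M_\ell<30\ell$ and geometric summation, and enclose $I(16/25,0)=\tfrac54\erfc(4\sqrt\pi/5)$; the paper keeps only FCC $n\le2$ and BCC $\ell\le12$, obtaining the bounds $26459/10^6$ and $29536/10^6$. One correction to your numerics: in fact $P(\F,0)\approx0.05586$ (equivalently $W(\F)\approx-1.44414$), so the margin below $0.055995$ is about $1.4\cdot10^{-4}$, not $5\cdot10^{-6}$, and far fewer correct digits are needed than you suggest.
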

\begin{proof}
The FCC and BCC squared lengths are $2^{1/3}n$ and $2^{-4/3}\ell$,
with shell multiplicities $N_n$
and $M_\ell$. Fixing two integer coordinates gives
$N_n<30n$ and $M_\ell<30\ell$. Two integrations by parts yield
\begin{equation}\label{eq:cusp-I-upper}
 I(q,0)\le
 \frac{e^{-\pi q}(\pi q+1)}{\pi q(\pi q+3/2)}
 \le\frac{e^{-\pi q}}{\pi q}.
\end{equation}
The first two FCC multiplicities are $12$ and $6$. Consequently,
\[
 \frac12\sum_{v\in\F\setminus\{0\}}I(|v|^2,0)
 \le6I(2^{1/3},0)+3I(2^{4/3},0)
 +\frac{15e^{-3\pi2^{1/3}}}{\pi2^{1/3}(1-e^{-\pi2^{1/3}})}
 <\frac{26459}{10^6}.
\]
For BCC, the nonzero integer squared norms through twelve are
$3,4,8,11,12$, with multiplicities $8,6,12,24,8$. Thus
\[
 \frac12\sum_{v\in\B\setminus\{0\}}J(|v|^2,0)
 \le\frac12\sum_{\ell\in\{3,4,8,11,12\}}
 M_\ell\frac{e^{-\pi 2^{-4/3}\ell}}{\pi 2^{-4/3}\ell}
 +\frac{15e^{-13\pi 2^{-4/3}}}{\pi 2^{-4/3}(1-e^{-\pi 2^{-4/3}})}
 <\frac{29536}{10^6}.
\]
The finite expressions, including the displayed geometric tails, give
these strict bounds by outward interval evaluation. The same evaluation
of $I(16/25,0)=\frac54\erfc(4\sqrt\pi/5)$ gives
$I(16/25,0)>7/125$. Adding the two upper bounds proves the lemma.
\end{proof}

\begin{lemma}[Numerical bounds for the local expansion]\label{lem:local-profile-checks}
Let $h$, $C_3$, and $M_4$ be the endpoint
bounds defined by \eqref{eq:profile-hlower},
\eqref{eq:profile-Cbound}, and \eqref{eq:profile-Bell4}, including
the tail enlargements in Section~\ref{sec:certified-local}. On each of
the following closed parameter intervals they satisfy:
\begin{equation}\label{eq:profile-finite-check}
 \begin{array}{c|c|c|c}
 \text{reference}&[u,v]&\text{integer index range}&
 \displaystyle \frac h2-\frac{C_3}{240}-\frac{M_4}{38400}\\ \hline
 \F&[j/100,(j+1)/100]&0\le j\le999&>10^{-6}\\
 \B&[j/100,(j+1)/100]&0\le j\le24&>10^{-3}.
 \end{array}
\end{equation}
\end{lemma}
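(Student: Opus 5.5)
The statement is a finite list of certified numerical inequalities, so I will prove it by rigorous interval evaluation of the explicit quantities, organised to make each evaluation finite and monotone. On a fixed interval $[u,v]$ and a fixed reference $C\in\{\F,\B\}$, the three quantities $h$, $C_3$, $M_4$ are finite sums over the retained integer shells (FCC $|z|^2\le18$, BCC $|z|^2\le30$). The summands are the integer moments $m_2,a_\tau$, the third-moment sums $U_3,U_{12},U_1$, and the kernels $K_j$, $T_j$. The tail allowances $10^{-10}$, $10^{-6}$ and $10^{-6}$ have already been justified in Lemma~\ref{lem:local-derivative-tails}.

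\textbf{Step 1: exact moments.} First I tabulate the integer shell moments exactly. I enumerate $z\in[-6,6]^3$ with the parity conditions and group by $m=|z|^2$. I then evaluate $U_3,U_{12},U_1$ on the three test matrices $H_D,H_T,H_O$ exactly in integers. These are pure integer arithmetic, and the physical moments are obtained by multiplying by powers of $2^{-2/3}$ or $2^{-4/3}$, enclosed in rational intervals.

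\textbf{Step 2: kernels and certified enclosures.} Next I need certified enclosures of the screening-free kernels $K_k(A)$ for $-60\le k\le4$ at the finitely many arguments $A=\pi s m$.
\begin{itemize}
\item For $k=0$ I use the erfc form.
\item For the other indices I use the integration-by-parts recursion $(k+1/2)K_k(A)=AK_{k+1}(A)-e^{-A}$. I run it downward for the negative indices, which is stable because $A>0$ and $K_k$ decreases in $|k|$ in the needed direction, and I also run it upward to $k=4$.
\item Every step is carried out in outward-rounded interval arithmetic.
\end{itemize}
The dual kernels $T_j(A,c)$ are elementary: they are the reciprocal moments $A_n$, each $e^{-\alpha}$ times a polynomial in $1/\alpha$. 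They are evaluated exactly at the appropriate endpoint using the monotonicity in $c$. The truncated Taylor sums $L_j(A,v)$ and $U_j(A,u)$ then give, via \eqref{eq:profile-Ksandwich}, valid lower and upper enclosures of the direct kernels over the whole interval.

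\textbf{Step 3: assembling the three bounds.} From these enclosures I assemble the quantities as follows.
\begin{itemize}
\item $h_D$ and $h_O$ come from \eqref{eq:profile-hlower}.
\item The cubic coefficients come from \eqref{eq:profile-thirdmoment}, with signed enclosures widened by $\pm10^{-6}$. Taking absolute upper bounds gives $b_\alpha,b_\beta,b_\gamma$, hence $C_3$ by \eqref{eq:profile-Cbound}.
\item $M_4$ comes from \eqref{eq:profile-Bell4} at $c=u$ plus $10^{-6}$.
\end{itemize}
I then check the margin column in \eqref{eq:profile-finite-check} on each of the $1000$ FCC cells and $25$ BCC cells.

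\textbf{Main obstacle.} I expect the difficulty to be twofold: the FCC cells at large $c$ (up to $c=10$), and the reliability of the degree-$61$ Taylor enclosure there.
\begin{itemize}
\item \emph{Margin.} For large $c$ the Hessian eigenvalue $h$ becomes exponentially small, of order $e^{-\kappa a}$. The required margin $10^{-6}$ is then tight, and the slack between $L_2(\cdot,v)$ and $U_1(\cdot,u)$ across a width-$1/100$ cell must be small relative to $h$.
\item \emph{Cancellation.} Alternating terms $c^n/n!$ with $c=10$ suffer huge cancellation, so exact rational or high-precision interval arithmetic is needed for the Taylor sums.
\end{itemize}
If the margin fails on some cells, I would subdivide those cells further. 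Alternatively, on $[u,v]$ I would evaluate $K_j(A,c)$ directly via the closed erfc formula for $I$ and its $c$-recursion for $H_j$, which avoids the long alternating series entirely.
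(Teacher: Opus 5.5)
Your proposal follows the paper's proof: exact integer shell moments, rigorous enclosures of $K_k$ and $T_j$, the Taylor sandwich \eqref{eq:profile-Ksandwich} with monotonicity in $c$ at the cell endpoints, and outward-rounded high-precision evaluation of $h/2-C_3/240-M_4/38400$ on all $1000$ FCC and $25$ BCC cells (the paper uses $256$-bit rational intervals, lower endpoints for $h$ and upper endpoints for $C_3,M_4$). Two small corrections: your downward recursion for $K_k$ is not stable for $|k|<A$, since errors grow by about $A/|k+\tfrac12|$ per step, so it stays harmless only because the interval arithmetic is rigorous and the precision absorbs the loss; and $M_4$ needs kernels at the shifted arguments $\pi e^{-\delta}sm$ (with $U=\pi e^{\delta}sm$) from \eqref{eq:profile-Bell4}, not only at $\pi sm$.
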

\begin{proof}
The retained FCC and BCC integer shells are those of
Section~\ref{sec:certified-local}, with squared integer norm at most
$18$ and $30$, respectively. Their integer moments determine
all terms in \eqref{eq:profile-hlower}, \eqref{eq:profile-Cbound}, and
\eqref{eq:profile-Bell4}. Evaluate these endpoint expressions with
outward rational interval enclosures with $256$ binary fractional
digits, taking lower
endpoints for $h$ and upper endpoints for $C_3,M_4$. The companion
interval computation encloses all scalar kernels and elementary
functions rigorously; its arithmetic specification is supplied with
the certificate.

For every one of the $1000$ FCC intervals and $25$ BCC intervals, the
lower endpoint of $h/2-C_3/240-M_4/38400$ exceeds the stated rational
bound. The shell allowances $10^{-10}$ and $10^{-6}$ are already
included in those expressions. The denominators are
$6/(1/40)=240$ and $24/(1/40)^2=38400$.
Monotonicity in $c$ and the signed Taylor bounds in
\eqref{eq:profile-Ksandwich} make these comparisons valid on the
entire closed intervals. This proves the lemma.
\end{proof}

For the global comparisons, use the exact rectangle
\begin{equation}\label{eq:numerical-root}
 \begin{split}
 \mathcal D={}&[\alpha_1,\beta]\times[\alpha_2,\beta]
       \times[0,1/2]\times[0,1]\times[0,1/2],\\
 \alpha_1={}&-\frac{7394735416760227}{2^{53}},\qquad
 \alpha_2=-\frac{2158713440539059}{2^{50}},\qquad
 \beta=\frac{5182419497180053}{2^{54}}.
 \end{split}
\end{equation}
In the reduction inequalities \eqref{eq:reduction-inequalities}, take
$y_i=e^{\ell_i}$. All corners below are the $32$ corners in the five
reduced coordinates.

\begin{lemma}[Numerical rectangular comparisons]
\label{lem:rectangular-numerical-estimates}
The endpoints in \eqref{eq:numerical-root} satisfy
\begin{equation}\label{eq:numerical-root-enclosures}
 e^{\alpha_1}\le\frac{11}{25},\qquad
 e^{\alpha_2}\le\frac{147}{1000},\qquad
 e^\beta\ge\frac43.
\end{equation}
Put
\[
 (c_0,\ldots,c_{10})=(0,1/4,1/2,1,2,4,8,12,16,24,32),
 \qquad J_j=[c_{j-1},c_j].
\]
For $1\le j\le10$, the rectangle $\mathcal D$ has a partition into
$n_j$ closed dyadic rectangles with disjoint interiors, where
\begin{align*}
 (n_1,\ldots,n_5)&=(20037,15481,10757,8852,7032),\\
 (n_6,\ldots,n_{10})&=(5523,4295,3409,4154,4503).
\end{align*}
Use the degrees and FCC radii
\[
 (N_1,\ldots,N_{10})=(12,14,18,24,30,40,40,40,40,40),
 \qquad
 \delta_j=
 \begin{cases}
 17/2000,&j=1,\\
 7/500,&2\le j\le7,\\
 3/100,&8\le j\le10.
 \end{cases}
\]
For every rectangle $\mathcal Q$ in the $j$th partition, at least one
of the following holds:
\begin{enumerate}[label=(\roman*),leftmargin=*]
\item One fixed right-hand side of
      \eqref{eq:reduction-inequalities} is strictly less than $1$
      at every corner of $\mathcal Q$.
\item $\max_{z\in\mathcal C(\mathcal Q)}d_1(z)\le16/25$.
\item For at least one $R\in\{\F,\B\}$, inequality
      \eqref{eq:numerical-profile-margin} holds with $J=J_j$ and $N=N_j$.
\item For $R=\F$ and $\delta=\delta_j$, or for $R=\B$,
      $j=1$ and $\delta=7/500$, every corner satisfies
      \begin{equation}\label{eq:local-corner-certificate}
       (1+\delta)Y_R-Y_z\succ0,\qquad
       (1-\delta)^{-1}Y_R^{-1}-Y_z^{-1}\succ0,
      \end{equation}
      with $Y_R$ as in \eqref{eq:global-reference-grams}.
\end{enumerate}
There are $84,043$ rectangles in total. The BCC local inequalities are
used only for $0\le c\le1/4$.
\end{lemma}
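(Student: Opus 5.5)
This is a finite, computer-assisted statement, so I would prove it by constructing the partitions explicitly and checking each alternative with outward-rounded interval arithmetic. The checks are organized so that every test reduces to finitely many rational or elementary-function inequalities at the $32$ corners of a rectangle.

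\emph{Endpoint enclosures.} I would verify \eqref{eq:numerical-root-enclosures} directly. The endpoints $\alpha_1,\alpha_2,\beta$ are dyadic rationals, so it suffices to compare rigorous enclosures of $\log(11/25)$, $\log(147/1000)$ and $\log(4/3)$ with them. This shows that $\mathcal D$ contains the logarithmic image of the rectangle \eqref{eq:endpoint-rational-root}.

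\emph{Building the partitions.} For each $j$ I would run adaptive dyadic bisection of $\mathcal D$, choosing the coordinate to split by the largest scaled width. A rectangle is accepted as soon as one alternative is certified; the certificate records the alternative used, the reference $R$, and in case (iii) the competitor sets $\mathcal M_I,\mathcal M_J$.

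\emph{Certifying the four alternatives.}
\begin{enumerate}[label=(\roman*),leftmargin=*]
\item I evaluate each right-hand side of \eqref{eq:reduction-inequalities} at the corners, with exponentials enclosed in intervals.
\item I bound $d_1(z)=y_1^{2/3}y_2^{1/3}$ above at the corners.
\item I compute the coefficients $a_{z,n}$ and the remainders $r_z$ of Lemma~\ref{lem:global-profile-numerics}. The moments $H_j$ and $A_n$ are evaluated by the stated recurrences, and $p_R$ is built from the shells $n\le48$ and $\ell\le192$. I then form the Bernstein coefficients $b_{z,k}$ with $C=c_j-c_{j-1}$ and $N=N_j$, and check \eqref{eq:numerical-profile-margin} against $\tau$ from \eqref{eq:profile-reference-tail}.
\item I test positive definiteness of the two $3\times3$ matrices in \eqref{eq:local-corner-certificate} by Sylvester's criterion on interval enclosures of their leading minors.
\end{enumerate}
A final pass confirms that the rectangles of each partition tile $\mathcal D$ with disjoint interiors; this is exact for dyadic boxes, by volume count plus pairwise interior-disjointness. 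The pass also confirms that the counts $n_j$ sum to $84{,}043$.

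\emph{Main obstacle.} The hard step is alternative (iii) near the transition interval $J_4=[1/2,1]$ and around the boundary of the local neighbourhoods. There the energy gap to the better reference is tiny, so the Bernstein coefficients barely exceed $\tau$. The recurrence for $H_{j+1}(q,u)$ divides by $u$, and is therefore numerically unstable for small $u$ and large $j$.

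To handle this, I would first use high-precision interval arithmetic: $256$-bit rationals, as in Lemma~\ref{lem:local-profile-checks}. I would also compute $H_j$ at $u=0$ via the forward recurrence and obtain the $u>0$ values by direct quadrature enclosures whenever the recurrence loses precision. Where certification still fails, I would raise $N_j$ or refine the rectangle.

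Finally, alternative (iv) must cover a full neighbourhood of $Y_F$ and $Y_B$ for every $j$. This is why the radii $\delta_j$ are matched to the parameter ranges of Theorem~\ref{thm:profile-local}, and why the BCC local alternative is restricted to $j=1$.
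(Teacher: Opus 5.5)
Your plan is essentially the paper's proof. The paper also uses a finite dyadic partition of $\mathcal D$ and certifies each rectangle by outward interval evaluation of one of four things: the reduction expressions, the corner values of $d_1$, the Bernstein margin \eqref{eq:numerical-profile-margin}, or the leading principal minors in \eqref{eq:local-corner-certificate}. The only procedural difference is that the paper verifies supplied bisection data rather than regenerating it, and that data is what fixes the exact counts $n_j$.

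One correction to your commentary. The transition $c_*=\kappa_*^2/(4\pi)\approx0.235$ lies in $J_1=[0,1/4]$, and $[1/2,1]$ is $J_3$, not $J_4$. This is why the BCC local certificate is needed only for $j=1$. For $j\ge2$, the neighbourhood of $Y_B$ is covered by alternative (iii) with $R=\F$, using the strict gap $P(\B,c)>P(\F,c)$, not by (iv).
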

\begin{proof}
The accompanying partition data give exact dyadic subdivisions of
\eqref{eq:numerical-root}. At each subdivision, the two closed
rectangles have disjoint interiors and their union is the original
rectangle. These rational identities verify coverage of $\mathcal D$
and give the ten stated counts.

At the midpoint of each rectangle, form the coefficients and
remainders of Lemma~\ref{lem:global-profile-numerics}, using the fixed
competitor sets and reference shells specified in Section~
\ref{sec:global-comparison}. Substitution in
\eqref{eq:numerical-profile-margin}, the reduction expressions, and
the three leading principal minors of the two matrices in
\eqref{eq:local-corner-certificate} establishes one of (i)--(iv).
These finite evaluations use outward interval bounds at precision
$128$ bits, including the exponential and complementary error
functions~\cite{MPFR}. Every asserted strict inequality has an enclosed
endpoint of the required strict sign; the nonstrict bounds in (ii)
are checked with their upper endpoints. The same interval calculation
gives \eqref{eq:numerical-root-enclosures}. The reference tails are
included through \eqref{eq:profile-reference-tail}, and the screening
remainders through \eqref{eq:profile-corner-remainder}; hence each
comparison covers the entire screening interval and shape rectangle.
\end{proof}

\begin{lemma}[Finite inequalities for the screening half-line]
\label{lem:global-halfline-numerics}
At $K=20$, $\mathcal D$ has a dyadic partition into $7,225$ rectangles.
Of these, $1,118$ satisfy alternative (i) of
Lemma~\ref{lem:rectangular-numerical-estimates}, $193$ satisfy (ii),
and $86$ satisfy the FCC inequalities
\eqref{eq:local-corner-certificate} with $\delta=3/100$.
On each of the remaining $5,828$ rectangles there is a fixed set
$\mathcal M\subset\{-3,-2,\ldots,3\}^3\setminus\{0\}$ for which
the right-hand side of \eqref{eq:high-value-tangent} strictly exceeds
the right-hand side of \eqref{eq:high-reference-upper}, and at least
one of the following holds:
\begin{enumerate}[label=(\alph*),leftmargin=*]
\item the right-hand side of \eqref{eq:high-derivative-interval}
      is nonnegative;
\item $\overline q_m<a^2$ for every $m\in\mathcal M$;
\item $\overline q_m<q_*$ for every $m\in\mathcal M$ and
      the right-hand side of \eqref{eq:high-derivative-tangent}
      is nonnegative.
\end{enumerate}
All bounds are evaluated at $K=20$, with the same fixed set $\mathcal M$
on each rectangle.
Consequently \eqref{eq:high-halfline-test} holds throughout each of
these $5,828$ rectangles.
\end{lemma}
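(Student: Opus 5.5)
The statement is a finite certificate, so the proof I have in mind is a verification procedure built from exact rational data plus outward interval arithmetic, organised as in Lemma~\ref{lem:rectangular-numerical-estimates}.

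\emph{Covering.} I would begin from $\mathcal D$ in \eqref{eq:numerical-root} and record the dyadic subdivision tree, with $7{,}225$ leaves. Each split can be checked exactly in rational arithmetic: the two halves have disjoint interiors and together equal their parent. Induction on the tree then shows that the leaves cover $\mathcal D$. The counts $1{,}118+193+86+5{,}828=7{,}225$ are read off the leaf labels.

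\emph{Excluded and local leaves.} Classes (i), (ii) and the FCC local class are verified exactly as in Lemma~\ref{lem:rectangular-numerical-estimates}, with the same interval evaluations:
\begin{itemize}
\item for (i), the fixed right-hand side of \eqref{eq:reduction-inequalities} is evaluated at all 32 corners and shown to be $<1$;
\item for (ii), the bound $d_1\le16/25$ is checked at the corners;
\item for the FCC local class, the three leading principal minors of the two matrices in \eqref{eq:local-corner-certificate}, with $\delta=3/100$, are shown positive at every corner.
\end{itemize}

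\emph{Remaining $5{,}828$ leaves.} On each such leaf, with the stored set $\mathcal M$, I would proceed in three steps.
\begin{enumerate}
\item Compute the midpoint Gram matrix $Q$. Then compute $f(Q[m])$ and $S_f$ at $K=20$, and bound $\max_z\operatorname{tr}(S_fY_z)$ over the corners. This gives an enclosure of the right-hand side of \eqref{eq:high-value-tangent}.
\item Compute an upper enclosure of \eqref{eq:high-reference-upper}, using the explicit shells $N_n$ for $n\le64$, enumerated in $[-12,12]^3$, together with the integral tail term. Check that the lower endpoint of step 1 exceeds this upper endpoint.
\item For the derivative condition, form $\underline q_m$ via \eqref{eq:high-length-interval}. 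The inner minima there are minima of squares of affine functions over intervals, so they can be computed exactly: the value is zero if the affine function has a root in the interval, and otherwise the smaller endpoint value. Then form $\overline q_m$ and $q_*$, and check (a), (b) or (c) as stored. For (a), minimise $h$ over $[\underline q_m,\overline q_m]$ using only the endpoints and $q_*$, as justified after \eqref{eq:high-derivative-interval}. For (c), assemble $S_h$ analogously.
\end{enumerate}

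\emph{The final "consequently".} Once one of (a)--(c) holds on a leaf, the corresponding inequality \eqref{eq:high-derivative-interval}, $\overline q_m<a^2$ (every term $h(q)$ is then positive), or \eqref{eq:high-derivative-tangent} gives $\partial_\kappa V_\kappa(Y(\xi))|_{\kappa=20}\ge0$ for every $\xi$ in that leaf. Steps 1 and 2 give $V_{20}(Y(\xi))>\Phi_{20}(\F)$ uniformly on the leaf. Together these are exactly \eqref{eq:high-halfline-test} throughout each of the $5{,}828$ rectangles.

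\emph{Main obstacle.} I expect the value comparison near the FCC neighbourhood to be the hard part. There the tangent-plane loss $\operatorname{tr}(S_fQ)-\max_z\operatorname{tr}(S_fY_z)$ is comparable to the small margin $V_{20}-\Phi_{20}(\F)$, and at $K=20$ the weights $e^{20(a-\sqrt q)}$ amplify any error in the corner traces. If the check failed on a leaf, I would first refine that leaf dyadically, and next enlarge $\mathcal M$ (still within $\{-3,\dots,3\}^3$). Leaves lying inside the $\delta=3/100$ operator ball I would reassign to the local class, which Theorem~\ref{thm:profile-local} handles since $\kappa a>12$.
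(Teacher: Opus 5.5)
Your proposal is correct and follows essentially the same route as the paper's proof. Coverage comes from exact rational checking of the $7{,}224$ dyadic subdivisions; the excluded and local leaves are handled by interval verification of the reduction, cusp and principal-minor tests; and on the remaining $5{,}828$ leaves you evaluate \eqref{eq:high-length-interval}, the value bound \eqref{eq:high-value-tangent} against \eqref{eq:high-reference-upper} (integral tail included), and one of (a)--(c), deducing \eqref{eq:high-halfline-test} through the tangent and corner-convexity inequalities exactly as the paper does, with only implementation details such as its $512$-bit endpoints differing.
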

\begin{proof}
The partition data give $7,224$ exact dyadic subdivisions. At each
subdivision the two closed rectangles have disjoint interiors and
union equal to the original rectangle; hence the resulting $7,225$
rectangles cover $\mathcal D$.
Evaluate \eqref{eq:high-length-interval} at each retained vector,
then the value bound and the applicable derivative bound above.
Outward interval enclosures with $512$-bit endpoints give the asserted
signs on all $5,828$ rectangles. The full reference upper bound in
\eqref{eq:high-reference-upper}, including its integral tail, is retained. The same enclosures verify
the reduction, cusp and local inequalities on the other rectangles;
positive definiteness is checked through the three leading principal
minors. Equations \eqref{eq:high-value-tangent}--
\eqref{eq:high-reference-upper} extend these finite comparisons to the
whole rectangles, and (a)--(c) each imply the derivative inequality in
\eqref{eq:high-halfline-test}.
\end{proof}

\begin{lemma}[Finite theta comparisons]\label{lem:finite}
Let $N_n$ count integer triples of squared norm $2n$, and let $M_\ell$
count integer triples of squared norm $\ell$ whose coordinates all
have the same parity. For every integer $0\le j<1000$,
\begin{align}
 \pi a^2\sum_{n=1}^{12}nN_n
 e^{-\pi a^2n(1+(j+1)/10000)}
 -\pi 2^{-4/3}\sum_{\ell=1}^{48}\ell M_\ell
 e^{-\pi 2^{-4/3}\ell(1+j/10000)}-10^{-18}
 &>\frac3{1000}.\label{eq:finite-derivative}\\
\intertext{For every integer $0\le j<2000$,}
 \sum_{\ell=1}^{48}M_\ell
 e^{-\pi 2^{-4/3}\ell(11/10+(j+1)/2000)}
 -\sum_{n=1}^{12}N_n
 e^{-\pi a^2n(11/10+j/2000)}-10^{-18}
 &>\frac1{10000}.\label{eq:finite-gap}\\
\intertext{Finally,}
 8e^{(\pi a^2-3\pi 2^{-4/3})21/10}
 &>\frac{25}{2}.\label{eq:finite-high}
\end{align}
\end{lemma}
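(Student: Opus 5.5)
The three inequalities in Lemma~\ref{lem:finite} are explicit finite expressions: integer shell counts, exponentials of rational multiples of $\pi a^2=\pi2^{1/3}$ and $\pi2^{-4/3}$, and rational thresholds. I would prove them by exact enumeration followed by outward interval evaluation.

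\textbf{Shell counts.} First enumerate the multiplicities. Listing $z\in\{-7,\dots,7\}^3$ with $|z|^2=2n\le24$ gives $N_1,\dots,N_{12}$, namely $12,6,24,12,24,8,48,6,36,24,24,24$. Listing all triples with coordinates of a common parity and $|z|^2=\ell\le48$ gives $M_\ell$. Here $M_\ell=0$ unless $\ell\equiv3\pmod 8$ (odd coordinates) or $\ell\equiv0\pmod4$ (even coordinates), and the first values are $M_3=8$, $M_4=6$, $M_8=12$, $M_{11}=24$, $M_{12}=8$. This is a finite integer computation with no rounding.

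\textbf{The cell inequalities.} Each of \eqref{eq:finite-derivative} and \eqref{eq:finite-gap} is, for each $j$, a difference of two finite sums of exponentials $e^{-\lambda t}$ at rational points $t$. I would evaluate both sums with rigorous outward interval arithmetic, say $128$-bit MPFR, as in Lemma~\ref{lem:rectangular-numerical-estimates}. I would enclose $\pi a^2$ and $\pi2^{-4/3}$ first, then every exponential, and check that the lower endpoint of each left side exceeds $3/1000$ (respectively $1/10000$) for all $1000$ (respectively $2000$) indices $j$. Since the sums are monotone in $t$, neighbouring cells share structure, but I would still check each $j$ separately rather than rely on monotonicity.

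\textbf{The scalar inequality and the main obstacle.} For \eqref{eq:finite-high} no heavy computation is needed. The inequality is equivalent to
\[
 \tfrac{21}{10}\,\pi\,(2^{1/3}-3\cdot2^{-4/3})>\log\tfrac{25}{16}.
\]
Using $2^{1/3}-3\cdot2^{-4/3}=2^{-4/3}(2^{5/3}-3)>0.1749\cdot0.3968>0.0694$, the left side exceeds $0.457$, while $\log(25/16)<0.4463$. Rational bounds on $2^{1/3}$, $\pi$ and $\log$ make this rigorous by hand.

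The main obstacle is the margin in \eqref{eq:finite-derivative} near $t=1$. There $D'(1)$ is a difference of comparable quantities of size about $1$ to $3$, and the claimed margin $3/1000$ may be small relative to the variation of each sum across a cell of width $10^{-4}$. The cell bound uses FCC terms at the right endpoint and BCC terms at the left endpoint, so it loses roughly $\pi a^2\sum n^2N_n e^{-\pi a^2 n}\cdot10^{-4}$ plus the analogous BCC term, which is of order $10^{-3}$. I would therefore first compute $D'(1)$ accurately to confirm there is enough slack. If the slack fails, I would refine the cells, but since the statement fixes the width $10^{-4}$, I expect the direct interval evaluation to succeed, with the certificate being simply the list of $3000$ enclosed lower endpoints.
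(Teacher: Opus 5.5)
Your plan matches the paper's proof: exact enumeration of $N_n$ and $M_\ell$, then outward interval evaluation of the finite exponential sums on every rational cell. The inequalities do hold, and the tightest cells are those adjacent to $t=11/10$ rather than $t=1$, where the left sides are about $3.75\cdot10^{-3}$ and $1.76\cdot10^{-4}$. One slip in your hand check of \eqref{eq:finite-high}: $2^{5/3}-3\approx0.17480<0.1749$ and $2^{1/3}-3\cdot2^{-4/3}\approx0.06937<0.0694$, but the cruder bound $2^{5/3}-3>0.174$ still gives $\tfrac{21}{10}\pi\cdot0.174\cdot0.3968>0.455>\log\tfrac{25}{16}$.
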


\begin{proof}
Integer enumeration gives
\[
 (N_1,\ldots,N_{12})
 =(12,6,24,12,24,8,48,6,36,24,24,24).
\]
The nonzero BCC indices and their multiplicities are
\begin{align*}
 \ell={}&(3,4,8,11,12,16,19,20,24,27,32,35,36,40,43,44,48),\\
 M_\ell={}&(8,6,12,24,8,6,24,24,24,32,12,48,30,24,24,24,8).
\end{align*}
Substitution in the finite exponential sums, with outward interval
enclosures on the stated rational cells, gives
\eqref{eq:finite-derivative} and \eqref{eq:finite-gap} for every
indicated index, as well as \eqref{eq:finite-high}. The margins in
the statement include the omitted-shell bounds used in the proof of
Lemma~\ref{lem:thetaorder}.
\end{proof}

\begin{lemma}[Exact transition enclosure]\label{lem:crossing-enclosure}
At the two rational screening parameters,
\begin{align*}
 2(E_\kappa(\B)-E_\kappa(\F))&<-10^{-19}
 &&\text{if }\kappa=\frac{1717921868469903}{10^{15}},\\
 2(E_\kappa(\B)-E_\kappa(\F))&>10^{-20}
 &&\text{if }\kappa=\frac{1717921868469904}{10^{15}}.
\end{align*}
Consequently, the unique root in Theorem~\ref{thm:pair} satisfies
\begin{equation}\label{eq:certified-crossing}
 1.717921868469903<\kappa_*<1.717921868469904.
\end{equation}
\end{lemma}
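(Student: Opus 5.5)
The enclosure \eqref{eq:certified-crossing} will follow from the two displayed sign conditions together with Theorem~\ref{thm:pair}. That theorem shows that $e^{c}\cdot 2(E_\kappa(\B)-E_\kappa(\F))$ is strictly increasing in $c=\kappa^2/(4\pi)$, hence in $\kappa>0$, and that it has exactly one zero $\kappa_*$. A strictly negative value at $\kappa^-=1717921868469903/10^{15}$ therefore forces $\kappa^-<\kappa_*$, and a strictly positive value at $\kappa^+=\kappa^-+10^{-15}$ forces $\kappa_*<\kappa^+$. The real work is certifying the two signs, whose margins $10^{-19}$ and $10^{-20}$ are tiny.

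For the evaluation I will use \eqref{eq:pairintegral},
\[
 2\bigl(E_\kappa(\B)-E_\kappa(\F)\bigr)=\int_1^\infty D(t)\bigl[t^{-1/2}e^{-c/t}-e^{-ct}\bigr]dt,
\]
and expand $D$ into shells. Write $D(t)=\sum_\ell M_\ell e^{-\pi2^{-4/3}\ell t}-\sum_n N_n e^{-\pi a^2 nt}$; the constant terms cancel. Each shell then contributes a closed form in the kernels of Section~\ref{sec:representation}: $I(q,c)-J(q,c)$ with $q=2^{-4/3}\ell$ or $q=a^2n$. Here $J$ is elementary, and $I$ has the erfc expression given after Proposition~\ref{prop:ewald}. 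So the difference equals
\[
 \sum_{\ell\ge1}M_\ell\{I-J\}(2^{-4/3}\ell,c)-\sum_{n\ge1}N_n\{I-J\}(a^2n,c),
\]
which is just $P(\B,c)-P(\F,c)$ written through the two duality-related sums.

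Next I truncate at shells $n\le N$ and $\ell\le M$, chosen large enough (for instance $n\le 48$ and $\ell\le192$ as in \eqref{eq:profile-reference-tail}) that the tails are bounded, via $N_n,M_\ell<30\ell$ and $I,J\le e^{-\pi q}/(\pi q)$, by a geometric quantity far below $10^{-25}$. The finite sum is then evaluated at the two rational values of $\kappa$ with outward interval arithmetic at high precision, say $256$ bits, using rigorous enclosures of $\exp$ and $\erfc$. Since $\kappa$ is rational, $c=\kappa^2/(4\pi)$ and $\sqrt c=\kappa/(2\sqrt\pi)$ are enclosed directly.

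The main obstacle is precision. The difference is of order $10^{-19}$, while the individual shell terms are of order $10^{-2}$, so cancellation costs about 17 decimal digits. The erfc expression for $I$ also mixes $e^{\pm\kappa\sqrt q}$ with $\erfc(\sqrt{\pi q}\pm\sqrt c)$. I therefore need roughly $40$ correct digits in every kernel. I would first try $256$-bit MPFR-style intervals and check that the final enclosure width is below $10^{-21}$. If it is not, I would increase the precision rather than restructure the argument, because the slope of the difference near $\kappa_*$ is of order one, and a $10^{-15}$ step in $\kappa$ yields the stated $10^{-19}$-scale margins.
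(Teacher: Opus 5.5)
Your proposal is correct and follows the paper's proof. You expand the difference into shells as $\sum_\ell M_\ell(I-J)(2^{-4/3}\ell,c)-\sum_n N_n(I-J)(a^2n,c)$, bound the omitted tail with $I,J\le e^{-\pi q}/(\pi q)$ and the $30n$ shell counts, evaluate the finite sums at the two rational values of $\kappa$ by outward interval arithmetic, and conclude from the sign change and the uniqueness in Theorem~\ref{thm:pair}. The only procedural difference is the truncation: the paper stops at $n\le12$, $\ell\le48$ with a tail below $7\cdot10^{-21}$, while your larger truncation simply makes the tail negligible.

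There are two harmless slips. First, the shell sum equals $2(P(\B,c)-P(\F,c))$, not $P(\B,c)-P(\F,c)$. Second, the slope of the difference near $\kappa_*$ is of order $10^{-4}$, not of order one, since the entire BCC--FCC gap at $\kappa=0$ is only about $1.8\cdot10^{-4}$; that small slope is exactly why a $10^{-15}$ step in $\kappa$ gives margins of size $10^{-19}$.
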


\begin{proof}
Retain FCC shells $n\le12$ and BCC shells $\ell\le48$ in the Ewald
difference, with the multiplicities in Lemma~\ref{lem:finite}.
Since $I(q,c),J(q,c)\le e^{-\pi q}/(\pi q)$, the absolute omitted
tail in $2(E_\kappa(\B)-E_\kappa(\F))$ is at most
\[
 \frac{60(1/49)^{13}}{1-1/49}
 +\frac{60(3/10)^{49}}{1-3/10}<7\cdot10^{-21}.
\]
Outward interval evaluation of the finite sums at the two rational
parameters, including this tail, gives the two signed bounds in the
statement. The uniqueness proved in Theorem~\ref{thm:pair} then gives
\eqref{eq:certified-crossing}.
\end{proof}

\bigskip
\noindent
{\bf Acknowledgements.}
 The research of S. Luo is partially supported by the National Natural Science Foundation of China (NSFC) under Grant Nos. 12261045 and 12001253, and by the Jiangxi Jieqing Fund under Grant No. 20242BAB23001.
 The research of J. Wei is partially supported by the General Research Fund (GRF) of Hong Kong "New frontiers in singularity formation of nonlinear partial differential equations". The authors used AI to assist the long but finite computations in the Appendix.

\end{document}